\numberwithin{paragraph}{section}
\numberwithin{equation}{section}
\newtheorem{satz}{Theorem}[section]
\newtheorem{lem}[satz]{Lemma}
\newtheorem{prop}[satz]{Proposition}
\newtheorem{Prop}[satz]{Proposition}
\newtheorem{kor}[satz]{Corollary}
\theoremstyle{definition}
\newtheorem{defn}[satz]{Definition}
\newtheorem{bem}[satz]{Remark}
\newtheorem{Const}[satz]{Construction}
\newtheorem{theorem}{Theorem}[]
\newtheorem{thm}[theorem]{Theorem}
\newcommand{\Z}{\mathbb{Z}}
\newcommand{\TT}{\mathbb{T}}
\newcommand{\Q}{\mathbb{Q}}
\newcommand{\R}{\mathbb{R}}
\newcommand{\C}{\mathbb{C}}
\newcommand{\T}{\mathbb{T}}
\newcommand{\G}{\mathbb{G}}
\newcommand{\Linear}{\mathbb{L}}
\newcommand{\vedge}{\land}
\newcommand{\del}{\partial}
\newcommand{\Xan}{X^{\an}}
\newcommand{\inj}{\hookrightarrow}
	\DeclareMathOperator{\an}{an}
	\DeclareMathOperator{\PD}{PD}
	\DeclareMathOperator{\trop}{trop}
	\DeclareMathOperator{\Hom}{Hom}
	\DeclareMathOperator{\Spec}{Spec}
	\DeclareMathOperator{\Trop}{Trop}
	\DeclareMathOperator{\sing}{sing}
	\DeclareMathOperator{\supp}{supp}
	\DeclareMathOperator{\id}{id}
	\DeclareMathOperator{\val}{val}
	\DeclareMathOperator{\CH}{CH}
	\DeclareMathOperator{\HH}{H}
\DeclareMathOperator{\AS}{\mathcal{A}}
\DeclareMathOperator{\CS}{\mathcal{C}}
\DeclareMathOperator{\GS}{\mathcal{G}}
\DeclareMathOperator{\LS}{\mathcal{L}}
\DeclareMathOperator{\OS}{\mathcal{O}}
\DeclareMathOperator{\XS}{\mathcal{X}}
\DeclareMathOperator{\A}{\mathbb{A}}
\def\quotient#1#2{\raise0.75ex\hbox{$\,#1$}\big/\lower0.75ex\hbox{$#2\,$}}
\title[Tropical Dolbeault cohomology of Mumford curves]{Poincar\'e duality for the tropical Dolbeault cohomology of non-archimedean Mumford curves}
\author[P.~Jell]{Philipp Jell}
\address{P. Jell, Mathematik, Universit{\"a}t 
Regensburg, 93040 Regensburg, Germany}
\email{philipp.jell@mathematik.uni-regensburg.de}
\author[V.~Wanner]{Veronika Wanner}
\address{V. Wanner, Mathematik, Universit{\"a}t 
Regensburg, 93040 Regensburg, Germany}
\email{veronika.wanner@mathematik.uni-regensburg.de}
\thanks{Both authors were supported by the collaborative research center SFB 1085 "Higher Invariants" funded by the Deutsche Forschungsgemeinschaft.
 }
\begin{document}
\begin{abstract}
We calculate the tropical Dolbeault cohomology for the analytifications of $\mathbb{P}^{1}$ and Mumford curves 
over non-archimedean fields. 
We show that the cohomology satisfies Poincar\'e duality and 
behaves analogously to the cohomology of curves over the complex numbers. 
Further, we give a complete calculation of the dimension of the cohomology on a basis of the topology.

\bigskip

\noindent
MSC: Primary 32P05; Secondary  14T05, 14G22, 14G40

\bigskip

\noindent
Keywords: Non-archimedean geometry, Berkovich spaces, Non-archimedean curves, Mumford curves, Smooth differential forms on Berkovich spaces, Tropicalizations,  Poincar\'e duality
\end{abstract}

\maketitle 
\tableofcontents

\section{Introduction}

In their preprint \cite{CLD}, Chambert--Loir and Ducros
introduced bigraded real-valued differential forms on Berkovich analytic spaces. 
These forms are analogues of differential forms on complex manifolds. 
Their construction is based on the definition of $(p, q)$-superforms on open subsets
of $\R^r$ together with linear differential operators $d'$ and $d''$ by Lagerberg \cite{Lagerberg}. 
For a Berkovich analytic space $X$, one obtains a double complex $(\AS_X^{\bullet, \bullet}, d', d'')$ of fine sheaves of real vector spaces.
It arose the natural question of whether the $d''$-cohomology $\HH^{p,q} := \HH^q(\AS^{p, \bullet}, d'')$ defined by these forms, 
and its dimension $h^{p,q} := \dim_\R \HH^{p,q}$, 
behave in a similar way as in the complex case. 
A Poincar\'e lemma was established in \cite{Jell}. 
This implies that the cohomology of these forms encodes the singular cohomology of $X$.
Namely, we have $\HH^{q}_{\sing}(X, \R) = \HH^{0,q}(X)$.

Let $K$ be an algebraically closed, complete, non-archimedean field and $X$ a smooth proper variety over $K$ with Berkovich analytification $\Xan$. 
As a consequence of the above, 
we have $\HH^{0,q}(\Xan) = 0$ for all $q > 0$ if $X$ has good reduction.
In particular, we do not have $h^{0,1}(\Xan) = g$ for all smooth proper curves $X$ of genus $g$. 
To our knowledge, the only calculation of $h^{p,q}(\Xan)$ for $p >0$ is that
if $X$ has good reduction, we have $h^{p,0}(\Xan) = 0$ for $p =1$ or $p = \dim(X)$ \cite[Proposition 3.4.11]{JellThesis} and 
for other $p$ under further assumptions \cite[Theorem 1.1]{Liu2}.
At this point, let us mention that the cohomology of differential forms introduced by Chambert--Loir and Ducros was also studied by Yifeng Liu, 
who amongst other things defines a cycle class map $\CH^p(X) \rightarrow \HH^{p,p}(\Xan)$ 
and shows finite dimensionality of $\HH^{1,1}(\Xan)$ for $X$ proper and smooth over $K = \C_p$  \cite[Theorem 1.8]{Liu}.

In the case that the analytic space is the analytification of an algebraic variety, 
there have been slight modifications of the definitions by Chambert-Loir and Ducros by 
Gubler \cite{Gubler} and the first author \cite{JellThesis}.
Both of these approaches lead canonically to the same forms as the approach by Chambert--Loir and Ducros if the absolute value of $K$ is non-trivial.

In this present work, we give a complete calculation of $h^{p,q}(\Xan)$ for $\mathbb{P}^{1}$ and Mumford curves, 
i.e.~smooth projective curves $X$ such that the special fibre of a semistable model has only rational irreducible components 
(cf.~Section \ref{Section 2} for some general properties of these curves).
We indeed find as in the complex case the following dimensions. 
Note that $h^{p,q}(X^{\an})=0$ for every algebraic curve $X$ if $p>1$ or $q>1$.
\begin{thm}\label{Theorem Einl. 1}
	Let either $X$ be $\mathbb{P}^1_K$ or $K$ be non-trivially valued and $X$ a Mumford curve over $K$.
	We denote by $g$ the genus of $X$ and let  $p,q\in\{0,1\}$. 
	Then we have
	\begin{align*}
	h^{p,q}(\Xan) = 
	\begin{cases} 
	1 \text{ if } p = q, \\
	g \text{ else}.
	\end{cases}
	\end{align*}
\end{thm}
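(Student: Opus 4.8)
The plan is to reduce everything to two cases. For $X=\mathbb{P}^1_K$, which in the non-trivially valued case is a Mumford curve of genus $0$, the statement asserts $h^{0,0}=h^{1,1}=1$ and $h^{1,0}=h^{0,1}=0$. Here $h^{0,q}=\dim \HH^q_{\sing}(\Xan,\R)$ by the Poincaré lemma of \cite{Jell}, and $\Xan$ is contractible, so $h^{0,0}=1$ and $h^{0,1}=0$. For $h^{1,0}$ and $h^{1,1}$ one would work directly: $\mathbb{P}^{1,\an}_K$ deformation retracts onto a point (or, more usefully, admits a nice exhaustion by stars of the tree structure), and one computes $\HH^1(\AS^{1,\bullet},d'')$ by a Mayer--Vietoris / exhaustion argument using that the relevant local pieces are open subsets of $\R$ where the tropical $d''$-cohomology is understood. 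This last computation — identifying $h^{1,1}(\mathbb{P}^{1,\an})=1$ via a trace/integration map and $h^{1,0}=0$ — is essentially the seed case and I expect the paper proves it first as a separate proposition, presumably with an explicit description of $\AS^{1,\bullet}$ on $\mathbb{P}^{1,\an}$ via the retraction to its skeleton and Lagerberg forms there.

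For a general Mumford curve $X$ of genus $g$ over a non-trivially valued $K$, the key structural input is the Schottky uniformization: there is a Schottky group $\Gamma\subset \mathrm{PGL}_2(K)$ of rank $g$ with domain of discordance $\Omega\subset \mathbb{P}^{1,\an}_K$ such that $X^{\an}=\Omega/\Gamma$, and $\Omega$ is an increasing union of $\Gamma$-invariant "standard" open subsets obtained by deleting finitely many open discs from $\mathbb{P}^{1,\an}$. First I would compute the tropical Dolbeault cohomology of such open subsets $U\subset \mathbb{P}^{1,\an}$ — these are handled exactly as in the $\mathbb{P}^1$ case, with $h^{1,1}(U)$ controlled by the number of "ends"/boundary components and $h^{1,0}(U)=0$, $h^{0,q}(U)=\HH^q_{\sing}(U,\R)$. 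Then I would pass to the quotient: since $\Gamma$ acts freely and properly discontinuously on $\Omega$ and the forms $\AS^{p,\bullet}$ are $\Gamma$-equivariant, there is a spectral sequence (or, since $\Gamma$ is free hence of cohomological dimension $1$, a short exact Hochschild--Serre type sequence)
\[
0 \longrightarrow \HH^1(\Gamma, \HH^{p,q-1}(\Omega)) \longrightarrow \HH^{p,q}(X^{\an}) \longrightarrow \HH^{p,q}(\Omega)^{\Gamma} \longrightarrow 0.
\]
Computing the $\Gamma$-action on $\HH^{p,\bullet}(\Omega)$ — where $\HH^{0,\bullet}(\Omega)$ is the singular cohomology of a tree (so concentrated in degree $0$) and $\HH^{1,0}(\Omega)=0$, $\HH^{1,1}(\Omega)$ is related to the $\Gamma$-representation on the "harmonic" cochains of the tree — then yields $h^{0,0}=1$, $h^{0,1}=\mathrm{rk}\,\Gamma = g$, $h^{1,0}=g$ (by a duality/symmetry between $d'$ and $d''$, or by identifying $\HH^{1,0}$ with $\Gamma$-invariant closed $(1,0)$-forms on $\Omega$, a $g$-dimensional space since $\Omega$'s skeleton is a tree on which $\Gamma$ acts with quotient a rose with $g$ petals), and $h^{1,1}=1$ (the fundamental class, detected by integration over $X^{\an}$).

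The main obstacle, I expect, is twofold. First, one must make the passage to the quotient rigorous at the level of the fine-sheaf double complex $(\AS^{\bullet,\bullet},d',d'')$: one needs that $\AS^{p,q}_{X^{\an}}$ is computed by $\Gamma$-invariant forms on $\Omega$ up to the group cohomology of $\Gamma$, which requires a partition-of-unity / averaging argument valid in the Berkovich-analytic setting and the fact that $\Omega\to X^{\an}$ is a topological covering to which the Chambert--Loir--Ducros forms pull back compatibly (using that the construction is local on the target and $G$-equivariant). Second, and more delicate, is the computation of the $\Gamma$-module structure of $\HH^{1,1}(\Omega)$ and the vanishing $h^{1,0}(\mathbb{P}^{1,\an})=0$ together with the value $h^{1,0}(X^{\an})=g$: this is where the interplay between the tropical (Lagerberg) $d''$-cohomology on the skeleton and the genuinely non-archimedean directions enters, and one needs the Poincaré lemma in the refined form that relates $\HH^{1,q}$ of an open in $\mathbb{P}^{1,\an}$ to the Lagerberg cohomology of its skeleton with the correct boundary behavior. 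I would isolate this as a lemma computing $h^{p,q}$ on the "standard opens" (a basis of the topology, as promised in the abstract), and then the theorem follows by combining it with the Schottky uniformization and the group-cohomology exact sequence above.
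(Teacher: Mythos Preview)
Your approach diverges from the paper's at the most basic level: the paper deduces this theorem as an immediate corollary of Poincar\'e duality (Theorem~\ref{Theorem Einl PD}), which is the main technical result. Since $\Xan$ is compact, $\HH^{p,q}(\Xan)=\HH^{p,q}_c(\Xan)$, so PD gives $h^{p,q}=h^{1-p,1-q}$; combined with $h^{0,q}=h^q_{\sing}(\Xan)$ (Theorem~\ref{Identification with singular cohomology}) and the fact that the skeleton of a Mumford curve has first Betti number $g$ (Theorem~\ref{Satz Mumford curve}), the proof is three lines. You never invoke PD, and that is the missing key idea.

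Your Schottky/Hochschild--Serre route also contains a concrete error. You assert $h^{1,0}(U)=0$ for the opens $U\subset\mathbb{P}^{1,\an}$ obtained by removing finitely many discs; this is false---one has $h^{1,0}(U)=k-1$ where $k=\#\del U$ (this is Theorem~\ref{Theorem Einl 2}, or equivalently $h^{1,0}(U)=h^{0,1}_c(U)=h^1_{c,\sing}(U)$ via PD)---and it is already inconsistent with your later claim that $\HH^{1,0}(\Omega)^\Gamma$ is $g$-dimensional. Even after correcting this, you would still need $\HH^{1,q}(\Omega)$ as a $\Gamma$-module, which requires either passing $\HH^{p,q}$ (not $\HH^{p,q}_c$) through an exhaustion, with the attendant $\varprojlim^1$ issues, or identifying $\HH^{1,q}$ of opens in $\mathbb{P}^{1,\an}$ with the cohomology of a single tropicalization. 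The latter is precisely the content of Section~\ref{section modifications} (linear tropical charts, refinements as tropical modifications, smoothness of the resulting tropical curve) and is exactly what the paper uses to establish PD; once that machinery is in place, PD follows and the Schottky detour becomes unnecessary.
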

The key statement to obtain this result is Poincar\'e duality for certain open subsets $V$ of $\Xan$ for a smooth algebraic curve $X$. 
We write $\HH^{p,q}_c(V) := \HH^q(\AS^{p, \bullet}_c(V), d'')$, where $\AS^{p,q}_c(V)$ are the sections of $\AS^{p,q}(V)$ with compact support.
For the definition of points of type $2$ and their genus see  Definition \ref{Def. Genus}.

\begin{thm}[Poincar\'e Duality]\label{Theorem Einl PD}
Let $K$ be non-trivially valued, $X$  a smooth curve over $K$ and 
$V \subset \Xan$ an open subset such that all points of type $2$ in $V$ have genus $0$. 
Then  
\begin{align*}
\HH^{p,q}(V) \rightarrow \HH^{1-p,1-q}_c(V)^*,~[\alpha]\mapsto ([\beta]\mapsto \int\nolimits_{V}\alpha\wedge \beta)
\end{align*}
is an isomorphism for all $p,q$.

If $K$ is trivially valued, then the statement is still true for any open subset $V$ of $\mathbb{P}^{1, \an}_{K}$.
\end{thm}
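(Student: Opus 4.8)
The plan is to reduce the statement to a local-to-global argument over the curve, following the standard sheaf-theoretic proof of Poincaré duality in the complex case but in the framework of the Chambert--Loir--Ducros/Lagerberg calculus. First I would record the input we get for free: by the Poincaré lemma of \cite{Jell}, the complex $\AS^{p,\bullet}$ computes, away from the skeleton, trivial cohomology, and the hypothesis that all type-$2$ points of $V$ have genus $0$ guarantees that $V$ is locally, around every point, one of a short list of model spaces --- an open subset of $\PM^{1,\an}_K$ or, more precisely, a "star-shaped" neighborhood built from an open subtree of the skeleton crossed with non-archimedean disks. On such a model space the cohomology $\HH^{p,q}$ and the compactly supported cohomology $\HH^{p,q}_c$ should be computable by hand (or quoted from an earlier section of the paper that does the $\PM^1$ computation), and the pairing $[\alpha]\mapsto([\beta]\mapsto\int_V\alpha\wedge\beta)$ should be seen to be a perfect pairing in each such local case. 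The integration pairing is well defined on cohomology by Stokes' theorem for the $d''$ operator, which holds in this setting.

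Next I would globalize. The key is that $V$ admits a locally finite cover by the model opens above, and that the assignment $U\mapsto\HH^{p,q}(U)$ and $U\mapsto\HH^{p,q}_c(U)$ fit into Mayer--Vietoris sequences --- the first is the usual one for the sheaves $\AS^{p,\bullet}$ (which are fine, hence soft, so $\HH^{p,q}$ is genuine sheaf cohomology of the complex of sheaves), and the second is the Mayer--Vietoris sequence with compact supports, which exists because $\AS^{p,q}_c$ is a cosheaf-like object on $V$. The integration pairing is compatible with the connecting maps of these two sequences up to a sign, so the five lemma lets me propagate the isomorphism from the model opens to finite unions, and then a limit argument (using that $\HH^{p,q}_c$ commutes with increasing unions, while $\HH^{p,q}$ satisfies the dual Mittag--Leffler type statement, because $V$ is a curve and the dimensions involved are finite once we know Theorem \ref{Theorem Einl. 1}-type bounds) extends it to all of $V$. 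For the genus-zero hypothesis this finiteness is exactly what makes the dual of a colimit compute correctly.

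The main obstacle, and the step I would spend the most care on, is the compatibility of the integration pairing with the two Mayer--Vietoris sequences, i.e.\ producing the commutative ladder between $\HH^{p,q}(U_1\cup U_2)\to\HH^{p,q}(U_1)\oplus\HH^{p,q}(U_2)\to\cdots$ and the dual of $\cdots\to\HH^{1-p,1-q}_c(U_1)\oplus\HH^{1-p,1-q}_c(U_2)\to\HH^{1-p,1-q}_c(U_1\cup U_2)$. This requires choosing a partition of unity subordinate to $\{U_1,U_2\}$ inside $\AS^{0,0}$, tracking how $d''$ of the cutoff functions enters the connecting homomorphisms on both sides, and checking via Stokes that the resulting boundary terms match --- the sign bookkeeping here is the genuinely delicate part. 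A secondary technical point is to handle points of type $3$ and type $1$ and the ends of $V$ correctly in the local model computation, and to make sure the "all type-$2$ points have genus $0$" hypothesis is used exactly where needed, namely to exclude local contributions that would otherwise break the perfectness of the local pairing (a positive-genus vertex would contribute extra classes to $\HH^{1,1}$ with no compactly supported dual). Finally, the trivially valued case over $\PM^{1,\an}_K$ should follow either by the same argument, noting that $\PM^{1,\an}_K$ in that case is a single interval (a cone over a point), making all the local models intervals or half-open intervals and the computation elementary, or by a base-change/limit comparison with the non-trivially valued situation.
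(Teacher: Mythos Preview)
Your globalization strategy via Mayer--Vietoris and the five lemma is not the route the paper takes, and the step where you pass from finite unions to an arbitrary open $V$ has a genuine gap. You justify the limit by saying ``$\HH^{p,q}$ satisfies the dual Mittag--Leffler type statement, because $V$ is a curve and the dimensions involved are finite once we know Theorem~\ref{Theorem Einl. 1}-type bounds,'' but Theorem~\ref{Theorem Einl. 1} concerns the compact space $\Xan$ for $X$ projective, not open subsets; for a strictly simple open $W$ with $k$ boundary points one has $h^{1,0}(W)=k-1$ (Theorem~\ref{Theorem Einl 2}), which is unbounded as $W$ varies, and for general open $V$ the groups need not be finite-dimensional at all. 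So neither finite-dimensionality nor the Mittag--Leffler condition on the tower $\HH^{p,0}(V_n)$ is available, and the identification $\HH^{p,1}(V)\cong\varprojlim \HH^{p,1}(V_n)$ is not justified. The paper sidesteps this entirely by a sheaf-theoretic argument (Lemma~\ref{Lem PD local}): one shows that $\PD\colon\LS^p_V\to\GS^{1-p}_V$ is an isomorphism of \emph{sheaves} and that the complex $({\AS^{1-p,1-\bullet}_{V,c}}^*,d''^*)$ is exact in positive degree; since $\AS^{p,\bullet}$ is a fine resolution of $\LS^p$ and ${\AS^{1-p,1-\bullet}_c}^*$ is a flasque resolution of $\GS^{1-p}$, the global statement follows at once from abstract sheaf cohomology, with no colimit step. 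Both hypotheses are local, so one may reduce to $V\subset\mathbb{P}^{1,\an}$ via Proposition~\ref{Prop Thm Mumford curve} and Remark~\ref{bem invariant}.

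A second point: you treat the local computation on the model opens as essentially known (``computable by hand or quoted from an earlier section''), but this is where the real work lies. The paper does \emph{not} compute $\HH^{p,q}_c$ of small opens in $\mathbb{A}^{1,\an}$ directly; instead it develops the machinery of linear $\A$-tropical charts (Section~\ref{section modifications}) to show that for a standard open $W\subset\mathbb{A}^{1,\an}$ with linear chart $(W,\varphi)$ the pullback $\HH^{p,q}_c(\varphi_{\trop}(W))\to\HH^{p,q}_c(W)$ is an isomorphism (Theorem~\ref{thm Cohom. lineare trop.}), by proving that linear refinements induce tropical modifications (Theorem~\ref{theorem refinement modification}) and invoking invariance of tropical cohomology under modifications. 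Since $\Trop_\varphi(\mathbb{A}^1)$ is a smooth tropical curve (Theorem~\ref{theorem tropical manifold}), its open subsets satisfy tropical Poincar\'e duality (Theorem~\ref{PD open subsets of smooth tropical curves}), and this is what yields both local hypotheses of Lemma~\ref{Lem PD local}. Your sketch does not engage with this transfer to the tropical side, which is the technical core of the argument.
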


If $K$ is non-trivially valued, for a smooth curve $X$ a basis of the topology of $\Xan$ is given by so called 
strictly simple open subset (for a definition see \ref{Def. simple}). 
Theorem \ref{Theorem Einl PD} enables us to calculate $h^{p,q}$ and $h^{p,q}_c$ for strictly 
simple open open subsets which do not contain type $2$ points of positive genus. 
By the boundary of such a strictly simple open subset we mean the topological boundary in $\Xan$. 
In particular, we show in Corollary \ref{the boundary is finite} that this boundary is finite.

\begin{thm}\label{Theorem Einl 2} 
Let $K$ be non-trivially valued, $X$ be a smooth projective  curve over $K$ and $p,q\in\{0,1\}$. 
Let $V$ be a strictly simple open subset of $\Xan$ such that all type $2$ points in $V$ have genus $0$ and
denote by $k := \# \del V$ the finite number of boundary points. 
Then we have 
	\begin{align*}
	h^{p,q}(V) = 
	\begin{cases}
	1 &\text{ if } (p,q) = (0,0) \\
	k - 1 &\text{ if } (p,q) = (1,0) \\
	0 &\text{ if } q \neq 0
	\end{cases}
	\text{ and }
	h^{p,q}_c(V) = 
	\begin{cases}
	1 &\text{ if } (p,q) = (1,1) \\
	k - 1 &\text{ if } (p,q) = (0,1) \\
	0 &\text{ if } q \neq 1.
	\end{cases}
	\end{align*}
\end{thm}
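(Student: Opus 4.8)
The plan is to reduce the computation of $h^{p,q}(V)$ and $h^{p,q}_c(V)$ for a strictly simple open $V$ to the Poincaré duality isomorphism of Theorem~\ref{Theorem Einl PD} together with a direct computation of the $(0,q)$-cohomology, which by the Poincaré lemma is just singular cohomology. First I would observe that a strictly simple open subset $V$ with all type~$2$ points of genus $0$ satisfies the hypothesis of Theorem~\ref{Theorem Einl PD}, so $\HH^{p,q}(V)\cong \HH^{1-p,1-q}_c(V)^*$ for all $p,q$; hence it suffices to compute $\HH^{0,q}(V)$ and $\HH^{1,q}(V)$, and the statements about $h^{p,q}_c$ follow by dualizing (noting all the vector spaces appearing are finite-dimensional, which is part of what must be checked). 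Since $\HH^{0,q}(V)=\HH^q_{\sing}(V,\R)$, the first key input is that a strictly simple open subset $V$ of $\Xan$ is homotopy equivalent to a finite graph, in fact to a wedge of circles or a tree-like skeleton; more precisely, $V$ deformation retracts onto a finite metric graph whose first Betti number I expect to be $0$ (because the genus-$0$ condition kills the loops coming from type~$2$ points of positive genus, and strict simpleness controls the shape). This gives $\HH^{0,0}(V)=\R$, $\HH^{0,1}(V)=0$, and $\HH^{0,q}(V)=0$ for $q\ge 2$ trivially since we are on a curve.

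Next I would compute $\HH^{1,0}(V)=\HH^0(\AS^{1,\bullet}(V),d'')$, the space of global $d''$-closed $(1,0)$-forms modulo nothing, i.e. the kernel of $d''\colon \AS^{1,0}(V)\to\AS^{1,1}(V)$. The strategy here is to use the tropical/combinatorial description of $(1,0)$-forms on a curve: on the skeleton these behave like harmonic $1$-forms (or like $1$-cochains with a balancing/closedness condition coming from $d''$), and on the "generic" parts of $V$ (the annuli and discs glued along the skeleton) a $d''$-closed $(1,0)$-form is locally constant in the appropriate sense. The boundary $\partial V$ consists of $k$ points (finiteness is Corollary~\ref{the boundary is finite}), each contributing a "slope at infinity" coordinate to a $d''$-closed $(1,0)$-form, and the obstruction to realizing a prescribed tuple of boundary slopes is a single linear relation (the slopes must sum to zero, a residue-type constraint), so $h^{1,0}(V)=k-1$. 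I would make this precise by choosing an explicit skeleton with $k$ unbounded ends, writing a $d''$-closed $(1,0)$-form in terms of its restriction to the edges, and checking that the closedness condition $d''\alpha=0$ forces the form to be determined by its end-slopes subject to one linear relation; alternatively one can invoke the long exact sequence relating $\HH^{p,q}(V)$, $\HH^{p,q}(\Xan)$ and local contributions near $\partial V$, using Theorem~\ref{Theorem Einl. 1} for the ambient $\mathbb{P}^1$ or Mumford curve as the known case. Finally $\HH^{1,1}(V)\cong\HH^{0,0}_c(V)^*$ and $\HH^{1,q}(V)=0$ for $q\ge 2$, and one reads off $h^{1,1}(V)$: a compactly supported function modulo $d''$-exact ones is detected by its "integral", but since $d''$ only sees the $q$-direction, $\HH^{0,0}_c(V)$ should vanish when $V$ is non-compact with nonempty boundary, giving $h^{1,1}(V)=0$ — wait, this must be reconciled with the claimed $h^{1,1}_c(V)=1$, which comes instead from $\HH^{1,1}_c(V)\cong\HH^{0,0}(V)^*=\R$; so I would be careful to pair up $\HH^{p,q}(V)$ with $\HH^{1-p,1-q}_c(V)$ in the correct direction.

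The main obstacle I anticipate is the explicit computation of $\HH^{1,0}(V)$, i.e. pinning down the space of global $d''$-closed $(1,0)$-forms and showing it has dimension exactly $k-1$. This requires a genuine understanding of how $(1,0)$-forms on the Berkovich curve decompose relative to a skeleton: one must show that the "transverse" (disc) directions contribute nothing new to $d''$-closed $(1,0)$-forms, that the skeletal part is governed by a combinatorial Laplacian/divisor-type condition, and that the only constraint linking the $k$ boundary slopes is their vanishing sum. Establishing the finiteness of $\partial V$ and the retraction onto a tree-like skeleton (using the genus-$0$ hypothesis) is the technical backbone; once that is in place, the cohomology computation becomes a finite linear-algebra problem on the skeleton, and Poincaré duality plus the Poincaré lemma handle everything else formally. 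I would also need to double-check the degenerate low-dimensional cases (e.g. $k=1$, where $V$ is a "half-open" piece and $h^{1,0}(V)=0$) to make sure the formula $k-1$ is uniformly correct.
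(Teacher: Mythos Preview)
Your overall framework is right --- reduce via Poincar\'e duality and identify $h^{0,q}(V)$ with singular cohomology --- and this matches the paper exactly for the $(0,q)$-column. But you take an unnecessary detour for $h^{1,0}(V)$, and that detour is where the proposal becomes a sketch rather than a proof.

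The point you are missing is that Theorem~\ref{Identification with singular cohomology} gives not only $h^{0,q}(V)=h^q_{\sing}(V)$ but also $h^{0,q}_c(V)=h^q_{c,\sing}(V)$. So instead of computing $h^{1,0}(V)$ directly, the paper computes $h^{0,1}_c(V)$ as compactly supported singular cohomology and then invokes $\PD$ to get $h^{1,0}(V)=h^{0,1}_c(V)$. This bypasses entirely the analysis of $d''$-closed $(1,0)$-forms, harmonic cochains, slopes at infinity, and residue relations that you identify as your main obstacle. Concretely, the paper shows (Lemma~\ref{Lemma Sternchen}) that $V$ is \emph{properly} homotopy equivalent to a one-point union $Y$ of $k$ half-open intervals glued at their closed endpoints; proper homotopy equivalence is what is needed since $h^q_{c,\sing}$ is only invariant under those. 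Then $h^0_{\sing}(Y)=1$, $h^1_{\sing}(Y)=0$ by contractibility, and a short long-exact-sequence-of-pairs computation gives $h^0_{c,\sing}(Y)=0$, $h^1_{c,\sing}(Y)=k-1$. Poincar\'e duality then fills in the $(1,q)$-column and the remaining compactly supported entries at once.

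Your proposed direct computation of $h^{1,0}(V)$ could in principle be made to work, but as written it is not a proof: the claim that a $d''$-closed $(1,0)$-form is determined by $k$ end-slopes subject to one linear relation requires justifying that nothing lives on the disc directions and that the skeletal part behaves as asserted, and none of this is supplied. Your confusion near the end about $h^{1,1}(V)$ versus $h^{1,1}_c(V)$ is a symptom of trying to compute the $p=1$ column directly rather than reading it off from the $p=0$ column via $\PD$. Switch to computing $h^{0,q}(V)$ and $h^{0,q}_c(V)$ topologically and the whole argument becomes short and clean.
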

Note additionally that for any smooth algebraic curve $X$ of genus $g$, 
the space $\Xan$ contains at most $g$ points of type $2$ with positive genus \cite[Remark 4.18]{BPR2}.
Thus this theorem describes the cohomology locally at all but finitely many points.
Further, if $X$ is a Mumford curve, then $\Xan$ contains no type $2$ points of positive genus 
(Theorem \ref{Satz Mumford curve} and Proposition \ref{Prop Thm Mumford curve}). 
Thus Theorem \ref{Theorem Einl 2} describes the cohomology on a basis of the topology.

We want to give a quick overview of the techniques we use. 
Differential forms on Berkovich analytic spaces are defined using superforms in the sense of Lagerberg \cite{Lagerberg} on tropical varieties. 
For a tropical variety $Y$ one obtains a double complex $(\AS^{\bullet,\bullet}_Y, d', d'')$ of sheaves on $Y$. 
These forms are then pulled back along the tropicalization map.
For an open subset $V$ of $\A^{1, \an}$ one a priori needs to consider all tropicalizations to determine the cohomology 
$\HH^{p,q}(V)$ resp.~$\HH^{p,q}_c(V)$.
Our key idea is to use the invariance of $\HH^{p,q}(Y)$ resp.~$\HH^{p,q}_c(Y)$ 
along tropical modifications (cf.~Proposition \ref{prop:closedmodification}) 
to show that,
for a large class of open subsets $V$ of $\A^{1, \an}$, the cohomology $\HH^{p,q}(V)$ resp.~$\HH^{p,q}_c(V)$
can be calculated using only one single tropicalization (cf.~Theorem \ref{thm Cohom. lineare trop.}).

Since we can further show that this tropicalization can be taken such that the resulting tropical curve is smooth, we can use the fact
that open subsets of a smooth tropical curve satisfy Poincar\'e duality to prove that $V$ does as well and 
then proceed to prove Theorems \ref{Theorem Einl. 1} and \ref{Theorem Einl 2}.

In this sense, the present paper can be seen as an application of the tropical statements of \cite[Chapter 4]{JSS}
to the Berkovich analytic setting. 
To make this work we use $\A$-tropical charts, i.e.~tropicalizations of closed embeddings 
$\varphi\colon U\to \A^{r}$ for an affine open subset $U$ of $X$, as in \cite{JellThesis}.

If one wishes to apply our techniques to more general curves resp.~varieties, one needs to have a better understanding
of when refinements between tropical charts induce tropical modifications and when tropicalizations are smooth.

In Section \ref{Section 2},  we recall all relevant notions and statements from previous work, 
including (smooth) tropical curves, superforms, forms on Berkovich spaces, Poincar\'e duality, tropical modifications and Mumford curves.

In Section \ref{section modifications}, we define linear $\A$-tropical charts $(V,\varphi)$. 
These are given by closed embeddings $\varphi\colon \A^1 \to \A^{r}$
such that the corresponding algebra homomorphism $\varphi^\sharp$ is given by linear polynomials and 
$V=\varphi_{\trop}^{-1}(\Omega)$ for an open subset $\Omega$ of $\Trop_\varphi(\A^1)$ 
(for the definition of $\Trop_\varphi(\A^1)$ see Definition \ref{Def. transition map}).
We will show that refinements of linear $\A$-tropical charts (cf.~Definition \ref{defn:GATcharts}) induce tropical modifications on the tropical level 
(cf.~Theorem \ref{theorem refinement modification}) and that $\Trop_\varphi(U)$ is a smooth tropical curve 
(cf.~Theorem \ref{theorem tropical manifold}). 
Combining this result with the fact that $\HH^{p,q}$ and $\HH^{p,q}_c$ are invariant under pullback along tropical modifications 
leads to the main result of this section: 
$\HH^{p,q}_{c}(V)$ is isomorphic to $\HH^{p,q}_{c}(\varphi_{\trop}(V))$ for any linear $\A$-tropical chart $(V,\varphi)$ 
where $V$ is a standard open subset of $\A^{1,\an}$. 
A definition of standard open subsets of $\A^{1,\an}$ is given in Definition \ref{Def. Trop. Grundlagen}.

In Section \ref{Section 4}, we prove Poincar\'e duality for open subsets $V$ of $\Xan$, 
where $X$ is a smooth algebraic curve and  $V$ does not contain any point of type $2$ which has positive genus. 
For this we use our considerations from Section \ref{section modifications} and 
the fact that these sets $V$ are locally isomorphic to open subsets of  $\A^{1,\an}$ (cf.~Proposition \ref{Prop Thm Mumford curve}).
For the proof of Poincar\'e duality, we first show that one can prove Poincar\'e duality locally.  
Thus, it follows from the fact that $\HH^{p,q}_{c}(V)$ is isomorphic to 
$\HH^{p,q}_{c}(\varphi_{\trop}(V))$ and  $\varphi_{\trop}(V)$ having Poincar\'e duality for suitable open subsets $V$, which is known from \cite{JSS}. 

In Section \ref{Section 5}, we apply Poincar\'e duality to prove Theorem \ref{Theorem Einl. 1} and Theorem \ref{Theorem Einl 2}.

\subsection*{Recent developments}

Since the first version of this paper was posted, two further papers studying tropical Dolbeault 
cohomology of curves have been posted. 
Y. Liu showed that for smooth projective curves $X$ there exists an isomorphism $N \colon \HH^{1,0}(\Xan) \to \HH^{0,1}(\Xan)$,
when the following assumptions are satisfied: $X$ is the base change to the completed algebraic closure 
of a smooth projective curve $X_0$ over a field $K_0$,
where $K_0$ is either a finite extension of $\Q_p$ or $k((t))$ for a finite field $k$. 
Further, $X$ has to admit a strictly semistable model over the valuation ring of $K_0$ \cite[Theorem 1.1]{Liu2}.
Also the first author showed that if the residue field of $K$ is the algebraic closure of a finite field,  
all smooth projective curves over $K$ satisfy Poincar\'e duality \cite[Corollary 4.3]{JellDuality}.
He also proves some local results, using Theorem \ref{Theorem Einl PD} from this paper.

\ack{The authors would like to thank Walter Gubler and Klaus K\"unnemann for very carefully reading drafts of this work and 
Vladimir Berkovich for explaining us that Proposition \ref{Prop Thm Mumford curve} follows from his work.
The authors are also grateful to the anonymous referee for their very detailed report, which
helped a lot to improve the presentation of this paper.}

\section{Preliminaries }\label{Section 2}
 
The aim of this section is to describe the notions we need and review some statements from previous work.

In this paper, let $K$ be an algebraically closed field endowed with a complete, 
non-archimedean absolute value $|~ |$. 
A variety over $K$ is an irreducible
separated reduced scheme of finite type over $K$ and a curve is a $1$-dimensional variety over $K$.

\subsection{Tropical curves and differential forms on Berkovich spaces}
Chambert--Loir and Ducros
introduced bigraded real-valued differential forms on Berkovich analytic spaces in their preprint \cite{CLD}. 
These are based on the definition of $(p, q)$-superforms on open subsets
of $\R^r$ by Lagerberg \cite{Lagerberg}. In this subsection, we give a short overview of these forms on the analytification of an algebraic curve.

Further, we review Poincar\'e duality and tropical modifications.
We restrict ourselves to the case of curves. 
The general case can be found, in a more conceptual way, in \cite[Section 2]{JSS} (for the tropical side) and \cite{JellThesis} (for the approach to 
forms on Berkovich spaces using $\A$-tropical charts).

\begin{defn} \label{defnd}
\begin{enumerate} [itemindent =*, leftmargin=0mm]
\item
For an open subset $U \subset \R^r$ denote by $\AS^{q}(U)$ the space of smooth real differential forms of degree $q$. 
The space of \emph{superforms of bidegree} $(p,q)$ on $U$ is defined as
\begin{align*}
\AS^{p,q}(U)
:= \Lambda^p {\R^r}^* \otimes_\R \AS^q(U).
\end{align*}
\item
There is a  \emph{differential operator} 
\begin{align*}
d'' \colon \AS^{p,q} (U) = \Lambda^p {\R^r}^* \otimes_\R \AS^q(U) \rightarrow \Lambda^p {\R^r}^* \otimes_\R \AS^{q+1}(U)= \AS^{p,q+1}(U)
\end{align*}
 which is given by $(-1)^p \id \otimes D$, where $D$ is the usual exterior derivative.

\item 
There is a wedge product  
\begin{align*}
\vedge: \AS^{p,q}(U) \times \AS^{p',q'}(U) &\rightarrow \AS^{p+p',q+q'}(U), \\
(\alpha \otimes \psi, \beta \otimes \nu) &\mapsto (-1)^{p' q} \alpha \vedge \beta \otimes \psi \vedge \nu ,
\end{align*}
which is, up to sign, induced by the usual wedge product.
\end{enumerate}
For all $p,q$ the functor $U\mapsto \AS^{p,q}(U)$ defines a sheaf on $\R^r$ and we have $\AS^{p,q}=0$ if $\max(p,q)>r$.
\end{defn}

\begin{defn} \label{defn:Tr} 
Let $\TT: = [-\infty , \infty)$ and equip it with the topology of a half open interval. 
Then $\TT^r$ is equipped with the product topology. 

A \emph{one dimensional polyhedral complex $\CS$ in $\R^r$}  
is a finite set of closed intervals, half lines, lines  in $\mathbb{R}^r$ (called \emph{edges}), 
and points (called \emph{vertices})
such that for each edge all endpoints are vertices and the intersection of two edges is empty or a vertex. 
For each $\sigma\in \CS$ we define 
$\Linear (\sigma):=\{\lambda\cdot(z-z')|\lambda\in\R,z,z'\in\sigma 
\} \subset \R^r$.
$\CS$ is called \emph{$\R$-rational} if $\Linear(\sigma) \cap \Z^r \neq \{0\}$ for all edges $\sigma \in \CS$.
The \emph{support} of such a complex $\CS$, which we denote by $\vert \CS\vert$, is the union of its edges and vertices. 
We only consider non-trivial one dimensional polyhedral complexes, i.e.~complexes which do not only consist of vertices. 
A \emph{polyhedral $\R$-rational curve $Y$ in $\T^r$} is the topological closure of $\vert \CS \vert$ 
for a one dimensional $\R$-rational polyhedral complex $\CS$. 
A \emph{polyhedral structure} $\CS$ on $Y$ is a one dimensional $\R$-rational polyhedral complex $\CS'$ 
such that $\vert \CS' \vert = Y \cap \R^r$ plus the vertices at infinity 
(i.e.~points in $Y \setminus \R^r$). 

Let $Y$ be a polyhedral $\R$-rational curve and $\CS$ a polyhedral structure on $Y$. 
$\CS$ is called \textit{weighted} if it is equipped with a \emph{weight} $m_\sigma  \in \Z$ on each edge $\sigma \in \CS$.
It is called \emph{balanced} if for all vertices $z$, which are not at infinity, we have 
\begin{align*}
\sum \limits_{\sigma: z \in \sigma} m_\sigma \nu_{z, \sigma} = 0,
\end{align*}
where $\nu_{z, \sigma}$ is the unique primitive vector in $\Z^r \cap \Linear (\sigma)$ pointing outwards of $\sigma$ at $z$.

A \emph{tropical curve} $Y$ is a $\R$-rational polyhedral curve with a balanced weighted polyhedral structure $(\CS, m)$,
up to weight preserving subdivision of $\CS$. 
Such a $(\CS, m)$ is then called a \emph{weighted polyhedral structure} on $Y$.
\end{defn}

\begin{defn}
Let $Y$ be $\R$-rational polyhedral curve in $\T^r$ and $\Omega$ an open subset of $Y$. 
If $\Omega \subset \R^r$, then a superform of bidegree $(p,q)$ on $\Omega$ is given by a superform $\alpha \in \AS^{p,q}(U)$ for $U$ an open
subset of $\R^r$ such that $\Omega = Y \cap U$. 
We can view $\alpha$ as a map $U \times (\R^r)^p \times (\R^r)^q \to \R$
and for $\sigma \in \CS$ we can consider its restriction to 
$(\Omega \cap \sigma) \times \Linear(\sigma)^p \times \Linear(\sigma)^q \subset U \times (\R^r)^p \times (\R^r)^q$. 
Two forms are identified if for some (and then all) polyhedral structures $\CS$ on $Y$ and all $\sigma \in \CS$ 
their restrictions to $(\Omega \cap \sigma) \times \Linear (\sigma)^{p} \times \Linear(\sigma)^{q}$ agree.

For general $\Omega$, a form $\alpha$ on $\Omega$ is given by a form $\alpha'$ on $\Omega' := \Omega \cap \R^r$,
satisfying the following \emph{boundary conditions}: 
For each $x \in \Omega \setminus \Omega'$, there exists a neighborhood $\Omega_x$ of $x$ in $\Omega$ such that 
\begin{enumerate}
\item
if $(p,q) = (0,0)$, then $\alpha' |_{\Omega_x \cap \R^r}$ is constant (note here that $\alpha'$ is indeed a function on $\Omega'$ in that case); 
\item
otherwise, we require $\alpha'|_{\Omega_x \cap \R^r} = 0$.
\end{enumerate}

We denote the space of such $(p,q)$-forms on $\Omega$ by $\AS^{p,q}(\Omega)$ and by $\AS^{p,q}_c(\Omega)$ the space of 
forms with compact support. 

The presheaf $\Omega \mapsto \AS^{p,q}(\Omega)$ is indeed a sheaf on $Y$. 

We define the differential and the wedge product by applying the respective operation to the forms on $U$ representing forms on $\Omega$. 
\end{defn}

\begin{bem}
The differential $d''$ and the wedge product are well defined on $\AS^{p,q}(\Omega)$ resp.~$\AS^{p,q}_c(\Omega)$. 

While the definition of forms on $Y$ may seem very ad hoc, they give a good notion of forms on tropical curves in $\T^r$. 
We refer to \cite{JSS} for more conceptual definitions which also work for higher dimensional tropical varieties. 
Theorem 3.22 in \emph{loc. cit.} can be seen as evidence that these definitions give a good definition of forms.
\end{bem}

\begin{bem}
Let $Y$ be a tropical curve and $\Omega$ an open subset of $Y$. 
There is a canonical non-trivial integration map $\int\colon \AS^{1,1}_c(\Omega) \rightarrow \R$
which satisfies $\int d'' \alpha = 0$ for all $\alpha \in \AS^{1,0}_c(\Omega)$ \cite[Definition 4.5 \& Theorem 4.9]{JSS}.
Note here that open subsets of tropical curves are tropical spaces in the sense of \cite[Definition 4.8]{JSS}. 
\end{bem}

\begin{defn}
Let $U' \subset \T^{r'}$ and $U \subset \T^r$ be open subsets.
An \emph{extended linear resp.~affine map} $F\colon U' \rightarrow U$ is the continuous extension (which may not always exist (!)) 
of a linear resp.~affine map $F\colon U'\cap \R^{r'} \to U\cap\R^r$. 
\end{defn}

\begin{bem}
There is a well defined, functorial pullback of superforms along extended affine maps $F$ which commutes with $d''$ and $\vedge$.
If $F$ maps a polyhedral curve $Y_1$ to a polyhedral curve $Y_2$, then this induces a pullback 
$F^* \colon \AS^{p,q}_{Y_2} \rightarrow F_*\AS^{p,q}_{Y_1}$. 
\end{bem}

\begin{satz}[Poincar\'e lemma]\label{Thm Poincare Lemma one vertex}
Let $Y$ be a tropical curve and $\Omega$ a connected open subset of $Y$, 
which, for some polyhedral structure $\CS$ on $Y$, contains at most one vertex.
Let $\alpha \in \AS^{p,q}(\Omega)$ such that $q > 0$ and $d'' \alpha = 0$. 
Then there exists $\beta \in \AS^{p,q-1}(\Omega)$ such that $d'' \beta = \alpha$. 
\end{satz}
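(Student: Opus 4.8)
The plan is to reduce the statement to a one-dimensional computation on $\R$ (resp.\ on a star of a vertex) and then to a classical Poincaré lemma for ordinary smooth differential forms. First I would observe that, since $Y$ is a \emph{curve}, the only interesting bidegrees are $q=1$, and then $p\in\{0,1\}$; if $p>1$ all forms vanish, so we may assume $\alpha\in\AS^{p,1}(\Omega)$. Write $\Omega'=\Omega\cap\R^r$. By definition a $(p,1)$-form on $\Omega$ is a $(p,1)$-form $\alpha'$ on $\Omega'$ satisfying the boundary conditions at the points of $\Omega\setminus\Omega'$ (vanishing near those points since $(p,q)\neq(0,0)$), and similarly the primitive $\beta$ we seek is a $(p,0)$-form on $\Omega'$ which must be constant (if $p=0$) near the points at infinity. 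So the whole problem takes place on $\Omega'\subset\R^r$, with a constraint near the boundary at infinity, plus the compatibility that $\beta$ restricts correctly to each edge.

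The key case is when $\Omega$ contains exactly one vertex $z$ of $\CS$; the case with no vertex is easier, since then (after subdividing) $\Omega'$ is a disjoint union of open segments, each of which lies in an affine line $z_0+\Linear(\sigma)$, and on each such piece the statement is just the classical fact that a closed smooth $1$-form on an interval is exact, so I would treat the no-vertex case first as a warm-up. For the one-vertex case, let $\sigma_1,\dots,\sigma_k$ be the edges of $\CS$ adjacent to $z$ that meet $\Omega$; shrinking $\Omega$ we may assume $\Omega'$ is a "star'' which is the union of the half-open segments $I_j\subset\sigma_j$ emanating from $z$. On each $\sigma_j$, the restriction $\alpha'|_{I_j}$ (tested against $\Linear(\sigma_j)^p\times\Linear(\sigma_j)$) is an ordinary closed smooth $1$-form in one variable, hence has a smooth primitive $\beta_j$ on $I_j$; the one-dimensional freedom in choosing $\beta_j$ (the additive constant, when $p=0$, or the full lower-degree part, when $p=1$) lets me pin down the value of all the $\beta_j$'s to agree at $z$. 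I then need to check that the resulting collection glues to a genuine element $\beta\in\AS^{p,0}(\Omega)$: this means producing a smooth superform on an open neighborhood $U\subset\R^r$ of $\Omega'$ whose restrictions to the edges are the $\beta_j$, which one does by a standard extension/partition-of-unity argument, and verifying the boundary condition at the vertices at infinity — automatic when $p=1$ (everything vanishes there by the boundary condition on $\alpha$, so $\beta$ can be taken $0$ there too) and, when $p=0$, satisfied because $\alpha'$ vanishes near such a point forces $\beta$ to be locally constant there.

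The main obstacle I expect is \textbf{the gluing step at the vertex $z$}: matching the one-variable primitives $\beta_j$ on the different edges into a single well-defined superform, while simultaneously respecting the identification of forms on $Y$ (restrictions to $(\Omega\cap\sigma)\times\Linear(\sigma)^p\times\Linear(\sigma)$) and the boundary conditions. Concretely, one must be careful that the "constant of integration'' is a single real number (or, for $p=1$, a single element of $\Lambda^1(\R^r)^*$ read off consistently on all edges) and that the chosen smooth representative on $U$ can be arranged to have the prescribed restrictions to finitely many segments through a common point — this is where a little care with bump functions supported near each $\sigma_j$ is needed. Once this is in place, $d''\beta=\alpha$ holds because $d''$ is computed edgewise and each $\beta_j$ was chosen to be an edgewise primitive, and the proof is complete.
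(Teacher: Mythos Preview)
Your outline is headed in the right direction, but the gluing step at the vertex $z$ is more delicate than you suggest, and your proposed fix (``bump functions supported near each $\sigma_j$'') does not work as stated: the edges all pass through $z$, so such bump functions cannot separate them in any neighbourhood of $z$. What you actually face is a Whitney-type extension problem: given edgewise primitives $G_j$ on the $I_j$ (normalized to agree at $z$), you need a single smooth form on an open $U\supset\Omega'$ whose restriction to each $I_j$ is $G_j$, and this forces all directional derivatives $\partial_{\nu_j}^n\beta(z)=G_j^{(n)}(0)$ to be simultaneously realizable by one jet at $z$. This compatibility \emph{does} hold --- but only because the $G_j$ come from a common smooth representative $\tilde\alpha\in\AS^{p,1}(U)$ of $\alpha$; checking it directly is essentially as much work as the lemma itself, and it is not a consequence of a partition-of-unity argument.

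The paper sidesteps this entirely by quoting the Poincar\'e lemma for polyhedrally star-shaped sets \cite[Theorem~2.16]{Jell} when the vertex is finite, and the basic-open case of \cite[Theorem~3.22 \& Proposition~3.11]{JSS} when the vertex is at infinity. Unpacking those references, the mechanism is not ``find edgewise primitives, then glue'' but a homotopy operator: pick a representative $\tilde\alpha$ on $U$ and integrate it along the linear contraction $s\mapsto z+s(x-z)$ to produce a smooth $\beta$ on $U$ in one stroke. The identity $d''\beta=\tilde\alpha$ may fail on $U$ (we do not know $d''\tilde\alpha=0$ there), but it holds after restriction to each edge, since the contraction rays through points of $I_j$ stay inside $I_j$ and restriction to an edge kills the $(p,2)$-term $Kd''\tilde\alpha$. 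This produces the global primitive directly, with the compatibility at $z$ built in. If you want a self-contained argument, this is the clean route; your edgewise construction can be salvaged, but not with partitions of unity alone.
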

\begin{proof}
If $\Omega$ contains a vertex $z$ which is not at infinity, then it is polyhedrally 
star shaped with center $z$ in the sense of \cite[Definition 2.2.11]{Jell}. 
If $\Omega$ contains no vertex, then it is just an open line segment and polyhedrally 
star shaped with respect to any of its points.
Thus if $\Omega$ contains no vertex at infinity, the result follows from \cite[Theorem 2.16]{Jell}. 
If the vertex is at infinity, $\Omega$ is a half open line with vertex 
at infinity, thus basic open in the sense of \cite[Definition 3.7]{JSS} and the statement follows from \cite[Theorem 3.22 \& Proposition 3.11]{JSS}. 
\end{proof}

\begin{defn} \label{Def. Trop. Grundlagen}

We denote by $\A^{r, \an}$ the \emph{Berkovich analytification of} $\A^r_K$ which is the set of multiplicative seminorms on $K[T_1,\dots,T_r]$,
which extend the absolute value on $K$, 
endowed with the weakest topology such that for every $f\in K[T_1,\dots,T_r]$ the map $\A^{r, \an}\to \R,~\rho\mapsto \rho(f)$ is continuous. 
A basis of open subsets of $\A^{r, \an}$ is given by sets of the form
$
\{ \rho\in\A^{r, \an}\;\mid  a_i < \rho(f_i )< b_i \}
$
for $a_i, b_i \in \R \cup \{\infty\}$ and $f_i \in K[T_1,\dots,T_r]$. 
We call sets of this form \emph{standard open subsets} of $\A^{r,\an}$.

In general, we denote for an algebraic variety $X$ by $\Xan$ its analytification in the sense of Berkovich \cite{BerkovichSpectral}.
\end{defn}

\begin{defn}\label{Def. transition map}
Let $Z$ be a closed subvariety of $\A^r = \Spec (K[T_1,\dots,T_r])$. 
Then we define $\Trop(Z)$ to be the image of $Z^{\an}$ under the tropicalization map
\begin{align*}
\trop\colon \A^{r, \an} &\rightarrow \T^r, \\
\rho &\mapsto (\log (\rho(T_1)), \dots, \log (\rho(T_r))).
\end{align*}
Let $X$ be a curve, $U$ an open subset of $X$ and $\varphi\colon U \rightarrow \A^r$ a closed embedding 
such that $\varphi(U)\cap \G^r_m\neq \emptyset$. 
We say that $\varphi$ \textit{is given by} $f_1,\ldots,f_r\in \OS_X(U)$ if the corresponding $K$-algebra homomorphism 
$\varphi^\sharp\colon K[T_1,\ldots,T_r]\to \OS_X(U)$ maps $T_i$ to $f_i$. 
Then $\varphi_{\trop} :=  \trop\circ\varphi^{\an}\colon U^{\an} \rightarrow \T^r$ is given by $\rho \mapsto (\log (\rho(f_1)), \dots, \log (\rho(f_r)))$. 
The map $\varphi_{\trop}$ is proper in the sense of topological spaces.
	
Further, we write $\Trop_\varphi(U):=\varphi_{\trop}(U^{\an})$. 
Note that $\Trop_\varphi(U)\cap \R^{r}$ is equal to the support of a one dimensional $\R$-rational polyhedral complex 
\cite[Theorem 10.14]{Gubler2} and 
$\Trop_\varphi(U)$ is its closure \cite[Lemma 3.1.4]{JellThesis}. Hence, $\Trop_\varphi(U)$ is an $\R$-rational polyhedral curve.
There is also a canonical way to associate positive weights to $\Trop_\varphi(U)$.
How to do this for $\Trop_\varphi(U) \cap \R^r$ is explained in \cite[13.10]{Gubler2}.
Since all our edges are contained in $\R^r$, we simply take the weights defined there.

Another embedding $\varphi'\colon U' \rightarrow \A^{r'}$ is called a \emph{refinement} of $\varphi$ if $U' \subset U$ and 
there exists a torus equivariant map $\psi\colon \A^{r'} \rightarrow \A^r$ such that $\varphi|_{U'} = \psi \circ \varphi'$. 
If that is the case, we obtain an extended linear map $\Trop(\psi)\colon \Trop_{\varphi'}(U') \rightarrow \Trop_{\varphi}(U)$ 
which satisfies $\varphi_{\trop}=\Trop(\psi)\circ \varphi'_{\trop}$ on $(U')^{\an}$. 
Since we have $\Trop_{\varphi'}(U') =\varphi'_{\trop}((U')^{\an})$, the map $\Trop(\psi)$ is independent of $\psi$. 
We call this map the \emph{transition map} between $\varphi$ and $\varphi'$. 
\end{defn}

Note that if for $i = 1,\dots,n$ we have closed embeddings $\varphi_i \colon U_i \to \A^{r_i}$, then
$\varphi_1 \times \dots \times \varphi_n \colon \bigcap_{i = 1}^n U_i \to \prod_{i=1}^n \A^{r_i}$ is a 
common refinement of all the $\varphi_i$.

\begin{satz}\label{Theorem tropical curve}
The space $\Trop_\varphi(U)$ together with the weights mentioned above is 
a tropical curve. 
\end{satz}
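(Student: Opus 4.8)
The plan is to reduce the statement to the balancing theorem for tropicalizations of closed subvarieties of algebraic tori, which is available in \cite{Gubler2}. Most of the required structure has in fact already been recorded in the discussion preceding the theorem: by \cite[Theorem 10.14]{Gubler2} the set $\Trop_\varphi(U)\cap\R^r$ is the support of a one dimensional $\R$-rational polyhedral complex, by \cite[Lemma 3.1.4]{JellThesis} the space $Y:=\Trop_\varphi(U)$ is its closure in $\T^r$, and adjoining the finitely many points of $Y\setminus\R^r$ as vertices at infinity produces an $\R$-rational polyhedral structure $\CS$ on $Y$. All edges of $\CS$ lie in $\R^r$, and we equip them with the weights $m_\sigma\in\Z_{>0}$ from \cite[13.10]{Gubler2}. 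Thus the only thing left to verify is the balancing condition at each vertex $z$ of $\CS$ that is not at infinity.

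For this I would first identify $Y\cap\R^r$ as the tropicalization of a subvariety of a torus. Since $\varphi$ is a closed embedding, $\varphi^{\an}$ is a closed immersion with image $\varphi(U)^{\an}$, so $\Trop_\varphi(U)=\trop(\varphi(U)^{\an})$; as the preimage of $\R^r$ under $\trop\colon\A^{r,\an}\to\T^r$ is $\G_m^{r,\an}$, we obtain $Y\cap\R^r=\trop\bigl((\varphi(U)\cap\G_m^r)^{\an}\bigr)=\Trop(W)$, where $W:=\varphi(U)\cap\G_m^r$. By the hypothesis $\varphi(U)\cap\G_m^r\neq\emptyset$ and irreducibility of the curve $\varphi(U)$, the set $W$ is a nonempty, hence dense, open subvariety of $\varphi(U)$ and closed in $\G_m^r$, so $W$ is a curve in the torus $\G_m^r$. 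The weights attached to $\Trop(W)$ in \cite[13.10]{Gubler2} are by construction exactly the $m_\sigma$ we chose on $\CS$, so there is no clash between two a priori different weightings.

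Finally, the balancing condition $\sum_{\sigma\colon z\in\sigma}m_\sigma\,\nu_{z,\sigma}=0$ at a finite vertex $z$ is precisely the content of the balancing theorem for tropical varieties of closed subvarieties of tori; for curves this is \cite[Theorem 13.10]{Gubler2} (cf.\ also the classical results of Bieri--Groves, Speyer, and Sturmfels--Tevelev). Applying it to $W\subset\G_m^r$ gives balancedness of $(\CS,m)$ at every vertex not at infinity, and together with the polyhedral structure already in place this shows that $(Y,(\CS,m))$ is a tropical curve.

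The statement is thus essentially a bookkeeping consequence of results quoted earlier together with the tropical balancing theorem, so there is no serious obstacle. The only point that requires a little care is the identification $Y\cap\R^r=\Trop(W)$ — making sure that passing to the torus does not lose or add any cells and that the weights match — which however is immediate from the definitions and from $\trop^{-1}(\R^r)=\G_m^{r,\an}$.
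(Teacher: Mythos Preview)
Your proposal is correct and follows essentially the same route as the paper: reduce to the balancing theorem for tropicalizations of closed subvarieties of tori, noting that the balancing condition only needs to be checked at finite vertices and that the weights are by definition those of \cite{Gubler2}. The paper's proof is terser (it simply cites \cite[Theorem 13.11]{Gubler2}), while you spell out the identification $Y\cap\R^r=\Trop(\varphi(U)\cap\G_m^r)$ more carefully; your reference for the balancing statement should be \cite[Theorem 13.11]{Gubler2} rather than 13.10, but otherwise the argument is the same.
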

\begin{proof}
For $\Trop_\varphi(U) \cap \R^r$, this theorem is due to many people, including Bieri, Groves, Sturmfels and Speyer. 
For a proof see \cite[Theorem 13.11]{Gubler2}.
Since our weights are the same as in \cite[Theorem 13.11]{Gubler2} and the balancing condition is only required at points in $\R^r$, 
the theorem follows.
\end{proof}

\begin{defn} \label{defn:GATcharts}
Let $X$ be an algebraic curve.
An \emph{$\A$-tropical chart} is given by a pair $(V, \varphi)$, 
 where $V \subset \Xan$ is an open subset and 
$\varphi\colon U \to\A^r$ is a closed embedding of an affine open subset $U$ of $X$
 such that $V = \varphi_{\trop}^{-1}(\Omega)$ for an open subset $\Omega \subset \Trop_\varphi(U)$.
 
Another $\A$-tropical chart $(V', \varphi')$ is called an \emph{$\A$-tropical subchart} of $(V, \varphi)$
 if $\varphi'$ is a refinement of $\varphi$ and $V' \subset V$. 
\end{defn}
Note that if $(V, \varphi)$ is a tropical chart and $\varphi' \colon U' \to \A^{r'}$ is a refinement of $\varphi$ 
such that $V \subset U'^{\an}$, 
then $(V, \varphi')$ is an $\A$-tropical chart, 
thus in particular a subchart of $(V, \varphi)$. 
To see this, let $\Trop(\psi)$ be the transition map between $\varphi$ and $\varphi'$ and $\Omega$ such that 
$V = \varphi_{\trop}^{-1}(\Omega)$. 
Then $V = \varphi'^{-1}_{\trop}(\Trop(\psi)^{-1}(\Omega))$.

$\A$-tropical charts form a basis of the topology of $\Xan$ \cite[Lemma 3.2.35 \& Lemma 3.3.2]{JellThesis}.
We will show this for $X = \A^1$ in Lemma \ref{lemma existence chart}.

\begin{defn}\label{Def. forms} 
Let $X$ be an algebraic curve and $V$ an open subset of $\Xan$. 
An element of $\AS_X^{p,q}(V)$ is given by a family $(V_i, \varphi_i, \alpha_i)_{i \in I}$ such that
\begin{enumerate}[itemindent=*, leftmargin=0mm]
\item for all $i \in I$ the pair $(V_i, \varphi_i)$ is an $\A$-tropical chart and $\bigcup _{i \in I} V_i = V$;
\item for all $i \in I$ we have $\alpha_i \in \AS^{p,q}_{\Trop_{\varphi_i}(U_i)}( \varphi_{i,\trop}(V_i))$;
\item for all $i, j \in I$ 
there is a cover of $V_i \cap V_j$ by common tropical subcharts $(V_{ijl}, \varphi_{ijl})_{l \in L}$ 
such that for all $l$ 
the forms $\alpha_i$ and $\alpha_j$ agree when pulled back to 
$(V_{ijl}, \varphi_{ijl})$ via the corresponding transition maps.
\end{enumerate}
Another such family $(V_j, \varphi_j, \alpha_j)_{j \in J}$ defines the same form on $V$ if $(V_i, \varphi_i, \alpha_i)_{i \in I \cup J}$ still defines a form on $V$.

Let $V'\subset V$ and $\alpha\in\AS_X^{p,q}(V)$ given by $(V_i, \varphi_i, \alpha_i)_{i \in I}$.
The subset $V'$ can be covered by $\A$-tropical subcharts $(W_{ij},\varphi_{ij})$ of $(V_i,\varphi_i)$ \cite[Lemma 3.2.35]{JellThesis}.
We define $\alpha|_{V'}$ to be given by the family $(W_{ij},\varphi_{ij},\Trop(\psi_{i,ij})^*\alpha_i)_{ij}$. 
Note that $\alpha|_{V'}$ is independent of all choices.

Then  $V\mapsto\AS_X^{p,q}(V)$ defines a sheaf on $\Xan$, which we denote by  $\AS_X^{p,q}$. 
If the space of definition is clear, we often just use the notation $\AS^{p,q}$. 
By Theorem \ref{Theorem tropical curve}, we have $\AS_X^{p,q}=0$ if $\max(p,q)>1$.

The differential $d''$ and the wedge product carry over. 

For a tropical chart $(V, \varphi)$, we call the map $\AS^{p,q}(\varphi_{\trop}(V)) \rightarrow \AS^{p,q}(V)$ the 
\emph{pullback along} $\varphi_{\trop}$.  
\end{defn}

\begin{prop} \label{pullback injective}
Let $(V, \varphi)$ be an $\A$-tropical chart. 
Then the pullback along $\varphi$ is injective. 
Further, if $\alpha \in \AS^{p,q}(V)$ is given by $(V, \varphi, \alpha')$, then
$\varphi_{\trop}(\supp(\alpha)) = \supp(\alpha')$. 
\end{prop}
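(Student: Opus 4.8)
The plan is to prove both assertions in Proposition \ref{pullback injective} at once, by analyzing what it means for a form given by the single chart datum $(V, \varphi, \alpha')$ to be zero, and more generally to have a prescribed support.

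\textbf{Setup.} Write $\varphi \colon U \to \A^r$, so $V = \varphi_{\trop}^{-1}(\Omega)$ for an open $\Omega \subset \Trop_\varphi(U)$ and $\varphi_{\trop}(V) = \Omega$ (using properness/surjectivity of $\varphi_{\trop}$ onto $\Trop_\varphi(U)$, so that $\varphi_{\trop}(\varphi_{\trop}^{-1}(\Omega)) = \Omega$). First I would recall from Definition \ref{Def. forms} that $\alpha := (V, \varphi, \alpha')$ is the zero element of $\AS^{p,q}(V)$ if and only if the family $(V, \varphi, \alpha')$ together with the empty family still defines a form, i.e.\ (by condition iii) there is a cover of $V = V \cap V$ by common tropical subcharts $(V_{l}, \varphi_{l})$ of $(V,\varphi)$ on which $\alpha'$ pulls back to $0$. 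The key point, which makes injectivity work, is that each $\varphi_l$ is a \emph{refinement} of $\varphi$, and the transition map $\Trop(\psi_l) \colon \Trop_{\varphi_l}(U_l) \to \Trop_\varphi(U)$ is an extended linear map which, restricted to $\varphi_{l,\trop}(V_l)$, is \emph{surjective onto} $\varphi_{\trop}(V_l)$ (again because $\Trop_{\varphi_l}(U_l) = \varphi'_{l,\trop}((U_l)^{\an})$ and $\varphi_{\trop} = \Trop(\psi_l)\circ\varphi'_{l,\trop}$).

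\textbf{Injectivity.} So suppose $\alpha = 0$ in $\AS^{p,q}(V)$; I want $\alpha' = 0$ in $\AS^{p,q}(\Omega)$. Fix a point $\omega \in \Omega = \varphi_{\trop}(V)$, pick $\rho \in V$ with $\varphi_{\trop}(\rho) = \omega$, and choose $l$ with $\rho \in V_l$. Then $\Trop(\psi_l)^* \alpha' = 0$ on $\varphi_{l,\trop}(V_l)$. Now I use the local structure of superforms: a superform on a polyhedral curve is determined, edge by edge, by its restriction to (edge)$\times L(\sigma)^p\times L(\sigma)^q$, and an extended linear surjection between tropical curves that is a bijection near a chosen point — or more carefully, that has a local section — reflects this data. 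The cleanest way is to argue that $\Trop(\psi_l)$ maps $\varphi_{l,\trop}(V_l)$ onto a neighborhood of $\omega$ in $\Omega$ and that, locally, $\Trop(\psi_l)$ pulled back to superforms is injective. This last fact holds because $\psi_l$ being torus equivariant means $\Trop(\psi_l)$ is (the restriction of) an integral-linear map $\R^r \to \R^{r'}$... wait, the direction is $\Trop_{\varphi_l}(U_l) \subset \T^{r'} \to \Trop_\varphi(U) \subset \T^r$, induced by a surjective torus map $\A^{r'} \to \A^r$ hence by a surjective linear map on cocharacter lattices; pullback of superforms along a surjective linear map (on the relevant linear spans, since $\Trop_{\varphi_l}(U_l)$ surjects onto $\Trop_\varphi(U)$ and both are $1$-dimensional) is injective. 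Hence $\alpha'$ vanishes in a neighborhood of $\omega$; as $\omega$ was arbitrary, $\alpha' = 0$ on $\Omega$.

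\textbf{Support.} For the support statement, one inclusion is immediate: $\supp(\alpha') \subset \varphi_{\trop}(V)$, and wherever $\alpha'$ vanishes on an open set $W \subset \Omega$, the restriction $\alpha|_{\varphi_{\trop}^{-1}(W)}$ is given by $(\varphi_{\trop}^{-1}(W), \varphi, \alpha'|_W) = 0$, so $\supp(\alpha) \subset \varphi_{\trop}^{-1}(\supp(\alpha'))$, whence $\varphi_{\trop}(\supp(\alpha)) \subset \supp(\alpha')$. For the reverse, suppose $\omega \in \Omega \setminus \varphi_{\trop}(\supp(\alpha))$; then $\varphi_{\trop}^{-1}(\omega)$ is disjoint from the closed set $\supp(\alpha)$, and by properness of $\varphi_{\trop}$ there is an open neighborhood $W$ of $\omega$ with $\varphi_{\trop}^{-1}(W) \cap \supp(\alpha) = \emptyset$, i.e.\ $\alpha|_{\varphi_{\trop}^{-1}(W)} = 0$. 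By the injectivity already established (applied to the chart $(\varphi_{\trop}^{-1}(W), \varphi)$, whose tropicalization is $W$), $\alpha'|_W = 0$, so $\omega \notin \supp(\alpha')$. This gives $\supp(\alpha') \subset \varphi_{\trop}(\supp(\alpha))$, completing the proof.

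\textbf{Main obstacle.} The delicate point is the local injectivity of pullback of superforms along the transition map $\Trop(\psi_l)$ near a chosen point. One must be careful that the transition map, though surjective onto $\varphi_{\trop}(V_l)$, need not be injective globally — it can "fold" edges — so the argument has to be genuinely local: near $\omega$ one picks an edge $\sigma'$ of $\Trop_{\varphi_l}(U_l)$ mapping onto an edge $\sigma \ni \omega$ of $\Trop_\varphi(U)$, notes $\Trop(\psi_l)$ restricts to a nonconstant affine (hence, on linear parts, surjective since both are one-dimensional) map $L(\sigma') \to L(\sigma)$, and invokes that pullback of superforms along a linear surjection of $\R$-vector spaces is injective. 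The rest is formal manipulation with the sheaf axioms of Definition \ref{Def. forms} and the properness of $\varphi_{\trop}$.
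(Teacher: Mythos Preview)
Your argument is essentially correct and is a faithful unpacking of what the paper itself only cites (injectivity from \cite[Lemma~3.2.42]{JellThesis}, and the support statement deduced from it exactly ``as in \cite[Corollaire~3.2.3]{CLD}'', which is precisely your properness-plus-injectivity reduction). Two small points worth tightening:

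\begin{itemize}
\item Your phrase ``together with the empty family'' is not quite how one realizes the zero form; cleaner is to compare with the datum $(V,\varphi,0)$. Condition iii) then gives exactly the cover by subcharts $(V_l,\varphi_l)$ on which $\Trop(\psi_l)^*\alpha'=0$, as you use.
\item The claim that $\Trop(\psi_l)$ sends $\varphi_{l,\trop}(V_l)$ onto a \emph{neighborhood} of $\omega$ in $\Omega$ is not obvious: $\varphi_{\trop}(V_l)$ is only the image of an open set under a continuous (proper) map. Your edge-by-edge fix in the ``main obstacle'' paragraph is the right idea, but note that for a fixed $l$ and fixed preimage $\omega'$ it is not guaranteed that every edge $\sigma\ni\omega$ is hit by some edge at $\omega'$. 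The clean way around this is to work pointwise on each edge: for $z$ in the relative interior of an edge $\sigma$ of $\Omega$, pick $\rho\in V$ with $\varphi_{\trop}(\rho)=z$, choose $l$ with $\rho\in V_l$, and then the edge of $\Omega_l$ through $\varphi_{l,\trop}(\rho)$ maps nonconstantly (hence with injective pullback of superforms) into $\sigma$. This gives $\alpha'|_\sigma=0$ near $z$; varying $z$ (and hence $l$) over the relative interiors of all edges yields $\alpha'=0$ on $\Omega$.
\end{itemize}

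With these adjustments your proof is complete and follows the same route as the references the paper invokes.
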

\begin{proof}
The first statement is shown in \cite[Lemma 3.2.42]{JellThesis}.
The second statement then follows from the first as in \cite[Corollaire 3.2.3]{CLD}.
\end{proof}

\begin{defn}\label{Defn nach forms}
For $V$ an open subset of $\Xan$, there is a non-trivial integration map $\int\colon \AS^{1,1}_c(V) \rightarrow \R$ which is compatible with pullback 
and satisfies $\int d'' \alpha = 0$ for all $\alpha \in \AS^{1,0}_c(V)$. For an explicit description we refer to \cite[Definition 3.2.58]{JellThesis}. 

Let $Y$ be either the analytification of an algebraic curve or a polyhedral curve and $V$ an open subset of $Y$. 
Then we denote the presheaf $V \mapsto \AS^{p,q}_{Y, c}(V)^* := \Hom_\R(\AS^{p,q}_{Y,c}(V), \R)$ by ${\AS^{p,q}_{Y, c}}^*$. 
Note that we do not put any topology on $\AS^{p,q}_Y$, thus the dual is always meant in the sense of linear algebra.
We write $\HH^{p,q}(V) := \HH^q(\AS^{p, \bullet}(V), d'')$ and $\HH^{p,q}_c(V) := \HH^q( \AS^{p, \bullet}_c(V), d'')$
and denote by $h^{p,q}(V)$ resp.~$h^{p,q}_c(V)$ the respective $\R$-dimensions. 
We denote by $d''^*$ the dual of the differential and use the notation 
$\LS_Y^p:=\ker(d''\colon\AS_Y^{p,0} \rightarrow \AS_Y^{p,1})$ and 
$\GS_Y^p := \ker ( d''^* \colon {\AS^{p,1}_{Y, c}}^* \rightarrow {\AS^{p,0}_{Y, c}}^*)$. 
\end{defn}
\begin{defn}
We denote by $h^{q}_{\sing}(V)$ resp.~$h^{q}_{c,\sing}(V)$ the $\R$-dimension of the singular cohomology 
$\HH^q_{\sing}(V,\R)$ resp.~$\HH^q_{c,\sing}(V,\R)$. 
\end{defn}

\begin{satz} \label{Identification with singular cohomology}
Let $X$ be an algebraic resp.~tropical curve and $V$ an open subset of $\Xan$ resp.~$X$.
Then $\LS^p_V \rightarrow (\AS^{p, \bullet}_V, d'')$ is an acyclic resolution of $\LS^p$. 
In particular, we have $\HH^q(V, \LS^p) = \HH^q((\AS^{p,\bullet}(V), d''))$ and $\HH^q_c(V, \LS^p) = \HH^q((\AS^{p,\bullet}_c(V), d''))$. 
Here $\HH^q_c$ denotes sheaf cohomology with compact support. 
We also have $\LS_V^0 = \underline{\R}$, where $\underline{\R}$ denotes the constant sheaf with stalks $\R$.
As a consequence, we obtain $h^{0,q}(V) = h^{q}_{\sing}(V)$ and $h^{0,q}_c(V) = h^q_{c,\sing}(V)$.
\end{satz}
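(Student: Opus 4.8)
The plan is to establish this statement by the standard "resolution via fine/acyclic sheaves" argument, reducing everything to the Poincar\'e lemma (Theorem \ref{Thm Poincare Lemma one vertex}) and the fine sheaf property of the $\AS^{p,q}$. First I would treat the tropical case: let $X$ be a tropical curve (or an open subset thereof) and recall that $\LS^p_X = \ker(d''\colon \AS^{p,0}_X \to \AS^{p,1}_X)$. The complex $\AS^{p,\bullet}_X$ with the $d''$-differential sits in degrees $0,1$ only, since $\AS^{p,q}_X = 0$ for $q > 1$ on a curve. The sequence $0 \to \LS^p_X \to \AS^{p,0}_X \xrightarrow{d''} \AS^{p,1}_X \to 0$ is exact as a sequence of sheaves: exactness on the left is the definition of $\LS^p$, and exactness on the right is precisely the local statement of the Poincar\'e lemma, Theorem \ref{Thm Poincare Lemma one vertex}, applied on a neighborhood basis of connected opens containing at most one vertex (which exists for any point of a tropical curve, including vertices at infinity). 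So $\LS^p_X \to \AS^{p,\bullet}_X$ is a resolution. That it is \emph{acyclic} follows because each $\AS^{p,q}_X$ is a fine sheaf: it is a sheaf of modules over the sheaf of smooth functions $\AS^{0,0}_X$, which admits partitions of unity subordinate to any open cover (this is standard for Lagerberg superforms on polyhedral complexes, inherited from the $C^\infty$ partitions of unity on $\R^r$ and compatible with the boundary conditions at infinity). Fine sheaves are acyclic for both ordinary sheaf cohomology and cohomology with compact supports on paracompact (here locally compact, second countable) spaces. Hence $\HH^q(X, \LS^p) = \HH^q(\AS^{p,\bullet}(X), d'')$ and $\HH^q_c(X, \LS^p) = \HH^q(\AS^{p,\bullet}_c(X), d'')$, by the usual abstract de Rham / hypercohomology comparison.

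Next I would pass to the analytic case: let $X$ be an algebraic curve and $V \subset \Xan$. The sheaves $\AS^{p,q}_X$ on $\Xan$ are again sheaves of $\AS^{0,0}_X$-modules, and $\AS^{0,0}_X$ admits partitions of unity on $\Xan$ (this is the content of the fine sheaf property for Chambert--Loir--Ducros / $\A$-tropical-chart forms; partitions of unity are built by pulling back tropical partitions of unity along suitable tropical charts and patching, cf.\ the construction in \cite{JellThesis}). Exactness of $0 \to \LS^p \to \AS^{p,0} \xrightarrow{d''} \AS^{p,1} \to 0$ on stalks reduces, via the fact that $\A$-tropical charts form a basis of the topology and the injectivity of pullback along $\varphi_{\trop}$ (Proposition \ref{pullback injective}), to the tropical Poincar\'e lemma on $\varphi_{\trop}(V_i)$ already handled above; here one uses that a cofinal system of neighborhoods of any point of $\Xan$ is given by preimages under $\varphi_{\trop}$ of connected opens containing at most one vertex of the relevant tropical curve. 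Acyclicity of the $\AS^{p,q}$ again follows from fineness, and the comparison isomorphisms follow formally.

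Finally, for the last two assertions: $\LS^0_V = \ker(d''\colon \AS^{0,0}_V \to \AS^{0,1}_V)$ consists of the locally $d''$-closed functions; by the boundary conditions in the definition of forms and the description of $d''$ as (essentially) the exterior derivative in the $\R^r$-directions, a $d''$-closed $(0,0)$-form is exactly a locally constant function, so $\LS^0_V = \underline{\R}$. Combining with the comparison isomorphism for $p = 0$ gives $h^{0,q}(V) = \dim_\R \HH^q(V, \underline\R) = h^q_{\sing}(V)$ and likewise $h^{0,q}_c(V) = h^q_{c,\sing}(V)$, using that singular cohomology of these (locally contractible, paracompact) spaces agrees with sheaf cohomology of the constant sheaf, and similarly with compact supports.

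The main obstacle is the verification of the fine sheaf property — i.e.\ the existence of partitions of unity for $\AS^{0,0}$ — on $\Xan$ (and its compatibility with the boundary conditions at vertices at infinity on the tropical side), together with checking that the comparison between singular cohomology and constant-sheaf sheaf cohomology with compact supports is valid for the (non-compact, but paracompact and locally contractible) spaces $V$ in question; everything else is a formal consequence of the Poincar\'e lemma and standard homological algebra.
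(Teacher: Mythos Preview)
Your proposal is correct and follows essentially the same route as the paper: fineness of the $\AS^{p,q}$ (via partitions of unity on the tropical side, pulled back through charts in the analytic case), exactness from the Poincar\'e lemma, and then the standard comparison of fine resolutions with sheaf cohomology (with and without compact supports), together with the identification of sheaf cohomology of $\underline{\R}$ with singular cohomology on locally compact Hausdorff spaces. The paper simply outsources each of these steps to explicit references (\cite{JSS}, \cite{JellThesis}, \cite{Wells}, \cite{Iversen}, \cite{Bredon}) rather than sketching the arguments as you do, but the content is the same.
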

\begin{proof}
The sheaves $\AS^{p,q}_X$ are fine for a tropical curve $X$ \cite[Lemma 2.15]{JSS}. 
As a consequence, the sheaves $\AS^{p,q}_X$ are also fine for an algebraic curve $X$ 
\cite[Lemma 3.2.17, Proposition 3.2.46 \& Lemma 3.3.6]{JellThesis}.
That $(\AS^{p, \bullet}_V, d'')$ is exact in positive degree follows from Theorem \ref{Thm Poincare Lemma one vertex} for the tropical case
and from \cite[Theorem 3.4.3]{JellThesis} for the algebraic case. 
That $\LS^0_V = \underline{\R}$ is shown in \cite[Lemma 3.4.5]{JellThesis}. 
The rest now follows from standard sheaf theory since $V$ is Hausdorff and locally compact.
The fact that $\AS^{p,q}_V$ are acyclic for both the functor of global section respectively global sections with compact support follows from
\cite[Chapter II, Proposition 3.5 \& Theorem 3.11]{Wells} and \cite[III, Theorem 2.7]{Iversen} and identification with singular cohomology comes from
\cite[Chapter III, Theorem 1.1]{Bredon}. 
\end{proof}

\begin{lem} \label{Lem compact support sheaf}
Let $Y$ be either the analytification of an algebraic curve or a polyhedral curve. 
Then the presheaf ${\AS^{p,q}_{Y,c}}^*$ is a sheaf and flasque. 
Further,
\begin{align}\label{Gl. Cohom.}
\HH^q( {\AS^{1-p, 1-\bullet}_{Y,c}}^*(V), d''^*) = 
\HH^{1-p,1-q}_c(V)^*
\end{align}
for every open subset $V$ of $Y$.
\end{lem}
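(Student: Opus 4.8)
The plan is to verify the sheaf and flasqueness properties first, and then deduce the cohomology identity \eqref{Gl. Cohom.} by a purely formal argument using that the dual of an exact sequence of vector spaces is exact, combined with the compactly supported Poincaré-type resolution already at our disposal. First I would check that $\Omega\mapsto{\AS^{p,q}_{Y,c}}^*(\Omega)=\Hom_\R(\AS^{p,q}_{Y,c}(\Omega),\R)$ is a sheaf: this is the general fact that the dual of a cosheaf is a sheaf. Concretely, for an open cover $\Omega=\bigcup_i\Omega_i$, compactly supported sections glue along the cover in the sense that $\AS^{p,q}_{Y,c}(\Omega)=\colim$ over finite unions, and more precisely one has the exact sequence of a cosheaf $\bigoplus_{i,j}\AS^{p,q}_{Y,c}(\Omega_i\cap\Omega_j)\to\bigoplus_i\AS^{p,q}_{Y,c}(\Omega_i)\to\AS^{p,q}_{Y,c}(\Omega)\to0$ (using that $\AS^{p,q}_{Y,c}$ is a fine/soft sheaf, hence admits partitions of unity to build the surjectivity, and the fact that a compactly supported form is a finite sum of forms supported in the $\Omega_i$). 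Applying $\Hom_\R(-,\R)$, which is left exact and turns the right-exact sequence into a left-exact one, yields exactly the sheaf axiom for ${\AS^{p,q}_{Y,c}}^*$.

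Next I would show flasqueness: for $V'\subset V$ open, the restriction $\AS^{p,q}_{Y,c}(V')\hookrightarrow\AS^{p,q}_{Y,c}(V)$ is the extension-by-zero map, which is injective; dualizing, the restriction ${\AS^{p,q}_{Y,c}}^*(V)\to{\AS^{p,q}_{Y,c}}^*(V')$ is the map $\Hom_\R(\AS^{p,q}_{Y,c}(V),\R)\to\Hom_\R(\AS^{p,q}_{Y,c}(V'),\R)$ induced by an injection of $\R$-vector spaces, hence surjective (every linear functional on a subspace extends). So ${\AS^{p,q}_{Y,c}}^*$ is flasque, in particular acyclic for global sections.

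For the cohomology identity, I would argue as follows. By Theorem \ref{Thm Poincare Lemma one vertex} (tropical case) and \cite[Theorem 3.4.3]{JellThesis} (algebraic case), the complex $(\AS^{p,\bullet}_{Y,c}(V),d'')$ computes $\HH^{p,\bullet}_c(V)$ by definition, and for each fixed $p$ the sheaves $\AS^{p,q}_Y$ are fine, so $\AS^{p,\bullet}_{Y,c}$ is a $c$-soft resolution of $\LS^p_Y$ restricted to the relevant degrees. Dualizing the complex of $\R$-vector spaces $(\AS^{1-p,\bullet}_{Y,c}(V),d'')$ termwise gives the complex $({\AS^{1-p,1-\bullet}_{Y,c}}^*(V),d''^*)$, but with the grading reversed: the object in degree $q$ of the dual complex is $\Hom_\R(\AS^{1-p,1-q}_{Y,c}(V),\R)$ and the differential is the transpose of $d''\colon\AS^{1-p,1-q-1}_{Y,c}(V)\to\AS^{1-p,1-q}_{Y,c}(V)$. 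Since $\Hom_\R(-,\R)$ is exact on $\R$-vector spaces and commutes with taking cohomology of a cochain complex (the cohomology of the dual is the dual of the cohomology, again because every subspace and quotient splits over a field), we get
\begin{align*}
\HH^q({\AS^{1-p,1-\bullet}_{Y,c}}^*(V),d''^*)
= \HH^{1-q}(\AS^{1-p,\bullet}_{Y,c}(V),d'')^*
= \HH^{1-p,1-q}_c(V)^*,
\end{align*}
which is \eqref{Gl. Cohom.}. The main obstacle I anticipate is the careful bookkeeping in the first step: verifying that $\AS^{p,q}_{Y,c}$ genuinely satisfies the cosheaf gluing sequence (surjectivity onto $\AS^{p,q}_{Y,c}(\Omega)$ by a partition of unity subordinate to the cover, and exactness in the middle), so that its dual is a sheaf; once softness/fineness is invoked this is standard, but it is the only place where something beyond linear algebra enters. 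Everything else — flasqueness and the grading-reversed duality of complexes — is formal.
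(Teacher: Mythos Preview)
Your proposal is correct and follows essentially the same approach as the paper: partitions of unity for the sheaf property, injectivity of extension by zero for flasqueness, and exactness of $\Hom_\R(-,\R)$ for the cohomology identity. Note that your invocation of the Poincar\'e lemma and fineness in the last paragraph is unnecessary, since $\HH^{1-p,1-q}_c(V)$ is \emph{defined} as $\HH^{1-q}(\AS^{1-p,\bullet}_c(V),d'')$ (cf.~Definition~\ref{Defn nach forms}); the identity \eqref{Gl. Cohom.} really is just the linear-algebra fact that dualizing a complex of $\R$-vector spaces dualizes its cohomology.
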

\begin{proof}
It is a sheaf because  $\AS^{p,q}_{Y}$ admits partitions of unity and it is flasque 
because for all $W \subset V$ the map $\AS^{p,q}_{Y, c}(W) \rightarrow \AS^{p,q}_{Y,c}(V)$ is injective.
The second assertion is true since dualizing is an exact functor. 
\end{proof}

\begin{defn}\label{Def. PD}
Let $X$ be an algebraic or tropical curve and $V$ an open subset (of $\Xan$ in the algebraic case). 
We define 
\begin{align*}
\PD \colon \AS^{p,q}(V) \to \AS^{1-p,1-q}_c(V)^*,~
\alpha \mapsto ( \beta \mapsto \varepsilon \int \alpha \vedge \beta)
\end{align*}
where $\varepsilon = 1$ if $p = q = 0$ and $\varepsilon=-1$ else. 
The morphism $\PD$ defined above induces a morphism of complexes 
$\PD \colon \AS^{p, \bullet}(V)\to \AS^{1- p, 1-\bullet}_c(V)^*$
where the complex $\AS^{p, \bullet}(V)$ is equipped  with $d''$ and $\AS^{1-p, 1-\bullet}_c(V)^*$ with its dual map $(d'')^*$. 

Hence, we get a morphism $\PD \colon \HH^{p,q}(V) \rightarrow \HH^{1-p,1-q}_c(V)^*$ by (\ref{Gl. Cohom.}). 
We say that $V$ \emph{has $\PD$} if the map on cohomology is an isomorphism for all $(p,q)$.
\end{defn}

\begin{lem} \label{lemma PD injective}
The map $\PD$ defined above induces a monomorphism $\LS^p \rightarrow \GS^{1-p}$.
\end{lem}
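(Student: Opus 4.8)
The plan is to show that the morphism of complexes $\PD \colon \AS^{p,\bullet}(V) \to \AS^{1-p,1-\bullet}_c(V)^*$ induces an injection on the kernels of the respective differentials in degree zero. Concretely, $\LS^p(V) = \ker(d'' \colon \AS^{p,0}(V) \to \AS^{p,1}(V))$ maps into $\ker((d'')^* \colon \AS^{1-p,1}_c(V)^* \to \AS^{1-p,0}_c(V)^*) = \GS^{1-p}(V)$ because $\PD$ is a morphism of complexes, so the only thing to verify is injectivity on global sections over every open $V$, and that this injectivity is compatible with restriction, i.e.\ that it defines a monomorphism of sheaves. Since a morphism of sheaves is a monomorphism as soon as it is injective on sections over every open set (or even just on stalks), it suffices to prove that for every open $V$ and every nonzero $\alpha \in \LS^p(V)$ the functional $\PD(\alpha)$ is a nonzero element of $\AS^{1-p,1-q}_c(V)^*$.

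The key step is therefore: given $\alpha \in \AS^{p,0}(V)$ with $\alpha \neq 0$, produce a compactly supported form $\beta \in \AS^{1-p,1}_c(V)$ with $\int_V \alpha \wedge \beta \neq 0$. First I would pass to an $\A$-tropical chart $(W,\varphi)$ with $W \subset V$ on which $\alpha$ is represented by a nonzero superform $\alpha'$ on $\Omega := \varphi_{\trop}(W) \subset \Trop_\varphi(U)$; this is possible because $\A$-tropical charts form a basis of the topology and, by Proposition \ref{pullback injective}, pullback along a chart is injective, so $\alpha|_W$ can be arranged to be nonzero and given by a single nonzero $\alpha'$. Now $\alpha'$ is a nonzero $(p,0)$-superform on an open subset of a polyhedral curve, so at some point $z \in \Omega \cap \R^r$ lying in the relative interior of an edge $\sigma$ its restriction to $\sigma$ is nonzero as an element of $\Lambda^p \Linear(\sigma)^*$. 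One then chooses, by a standard bump-function construction in the $1$-dimensional polyhedral setting, a form $\beta' \in \AS^{1-p,1}_c(\Omega)$ supported in a small neighbourhood of $z$ inside that edge such that $\alpha' \wedge \beta'$ is a nonzero multiple of a positive compactly supported $(1,1)$-form there, hence $\int \alpha' \wedge \beta' \neq 0$ by the non-triviality and positivity properties of the integration map recorded in Definition \ref{Defn nach forms} (and its tropical counterpart). Pulling $\beta'$ back along $\varphi_{\trop}$ and extending by zero gives $\beta \in \AS^{1-p,1}_c(V)$, and by compatibility of $\int$ with pullback we get $\int_V \alpha \wedge \beta = \int_\Omega \alpha' \wedge \beta' \neq 0$, so $\PD(\alpha) \neq 0$.

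Finally I would note that this argument is entirely local and functorial: restricting $\alpha$ to a smaller open set only shrinks the support, and the witness $\beta$ produced above already lives over an arbitrarily small $W$, so the injectivity holds on every open subset and is compatible with the restriction maps of the sheaves $\LS^p$ and $\GS^{1-p}$. Hence $\PD \colon \LS^p \to \GS^{1-p}$ is a monomorphism of sheaves. The main obstacle I anticipate is the bump-function step: one has to make sure that on a polyhedral curve — in particular near vertices and near points at infinity, where the boundary conditions in the definition of forms on $\TT^r$ impose vanishing — one can still build a compactly supported test form $\beta'$ pairing nontrivially with $\alpha'$; this is handled by choosing $z$ in the relative interior of an edge and away from the finitely many vertices, which is always possible since $\alpha' \neq 0$ forces it to be nonzero on some such $\sigma$, but it requires a little care to phrase cleanly in the superform formalism.
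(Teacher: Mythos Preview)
Your proposal is correct and follows essentially the same approach as the paper: reduce to showing that $\PD \colon \AS^{p,0} \to {\AS^{1-p,1}_c}^*$ is injective by producing, for any nonzero $\alpha$, a compactly supported test form $\beta$ via a bump function on the interior of an edge in a tropical chart, and then pull back in the algebraic case. The paper's proof is slightly more terse (it does not dwell on the sheaf-monomorphism aspect or on avoiding vertices/infinity, treating these as evident), but the content is the same.
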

\begin{proof}
That $\PD$ maps $\LS^p$ to $\GS^{1-p}$ follows because $\PD$ is a morphism of complexes. 
To show that this map is injective it is sufficient to show that $\PD \colon \AS^{p,0} \rightarrow {\AS^{1-p,1}_c}^*$ is injective,
i.e.~that for all $\alpha \in \AS^{p,0}(V) \setminus \{0\}$ there exists $\beta \in \AS^{1-p,1}_c(V)$ such that $\int_V \alpha \vedge \beta \neq 0$. 
In the tropical situation, there exists an open subset $\Omega$ which is contained in an edge $\sigma$ and 
a coordinate $x$ on $\sigma$ such that 
$\alpha \vert_\Omega = \pm f$ resp.~$\alpha \vert_\Omega = f  d'x$ for $f > 0$. 
Then letting $g$ be a bump function with compact support in $\Omega$ and $\beta = \pm g d'x \otimes d''x$ resp.~$\beta = g d''x $ suffices

In the algebraic situation, let $\alpha$ be given by $(V_i, \varphi_i, \alpha_i)$. 
Choose $i$ such that $\alpha_i \neq 0$ and $\beta_i \in \AS^{1-p,1}_c(\varphi_{i, \trop}(V_i))$ 
with $\int_{\varphi_{i, \trop}(V_i)} \alpha_i \vedge \beta_i \neq 0$. 
By definition of integration, we have $\int_V \alpha \vedge \varphi_{i, \trop}^*(\beta_i) = \int_{\varphi_{i, \trop}(V_i)} \alpha_i \vedge \beta_i$ 
which proves the claim.
\end{proof}

\begin{defn} \label{Def. val, dim, glatt}
Let $z$ be a vertex of a tropical curve $Y$ and let $\sigma_0,\dots,\sigma_k$ be the edges which contain $z$.
We define $\val(z) := k+1$. 
Further, we define $\dim(z) := \dim (\Linear(\sigma_0) + \ldots + \Linear(\sigma_k))$ if $z$ is not at infinity 
and $\dim(z) = 0$ if $z$ is at infinity.

A tropical curve $X$ is called \emph{smooth} if all weights are $1$ and 
for every vertex $z$ we have $\val(z) = \dim(z) +1 $.
\end{defn}

\begin{satz} \label{PD open subsets of smooth tropical curves}
Open subsets of smooth tropical curves have $\PD$. 
\end{satz}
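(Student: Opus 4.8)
The plan is to reduce Poincaré duality on an open subset $V$ of a smooth tropical curve $X$ to a purely local statement, then glue using sheaf cohomology. First I would observe that the Poincaré duality map fits into a morphism of sheaf complexes: by Theorem \ref{Identification with singular cohomology} we have $\HH^{p,q}(V)=\HH^q(V,\LS^p)$, and by Lemma \ref{Lem compact support sheaf} we have $\HH^{1-p,1-q}_c(V)^*=\HH^q({\AS^{1-p,1-\bullet}_{X,c}}^*(V),d''^*)$, where ${\AS^{1-p,1-\bullet}_{X,c}}^*$ is a flasque resolution of $\GS^{1-p}$. Hence $\PD$ on cohomology is computed by the sheaf morphism $\LS^p\to\GS^{1-p}$ of Lemma \ref{lemma PD injective}, which is already known to be a monomorphism. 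So the task reduces to showing this map is an isomorphism of sheaves (equivalently, an epimorphism, i.e. surjective on stalks) and that the flasque resolution $\GS^{1-p}\to{\AS^{1-p,1-\bullet}_{X,c}}^*$ together with $\LS^p\to(\AS^{p,\bullet},d'')$ are genuine acyclic resolutions computing the respective cohomologies — the latter is already in hand.

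The heart is therefore the claim that $\LS^p\to\GS^{1-p}$ is an isomorphism of sheaves on a smooth tropical curve $X$. This can be checked on stalks, hence on a neighborhood basis of each point $z$; by smoothness such a neighborhood can be taken to be a star-shaped open set containing only the vertex $z$ (if $z$ is a vertex) or an open segment, and with all weights $1$ and $\val(z)=\dim(z)+1$. On such a basic open set $\Omega$ one computes both sides explicitly: $\HH^{p,q}(\Omega)$ via the Poincaré lemma (Theorem \ref{Thm Poincare Lemma one vertex}), which gives $\HH^{p,q}(\Omega)=0$ for $q>0$ and $\HH^{p,0}(\Omega)=\LS^p(\Omega)$; and the compactly supported cohomology $\HH^{1-p,1-q}_c(\Omega)$, which by the integration pairing and an explicit antiderivative argument on the star is concentrated in the top degree. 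Comparing dimensions — using the balancing/smoothness condition to identify $\LS^p(\Omega)$, which for $p=1$ is governed by the lattice of primitive edge directions at $z$ and for $p=0$ is just the constants — one sees the pairing is perfect locally. This is essentially the content of \cite{JSS} for basic open subsets of smooth tropical spaces, so I would cite the relevant statement there (Poincaré duality for basic opens, or Theorem 3.22 in loc. cit.) rather than redo the star computation.

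Having established that $\LS^p\to\GS^{1-p}$ is an isomorphism of sheaves, the passage to arbitrary open $V$ is formal: both $\HH^{q}(V,\LS^p)$ and $\HH^q(V,\GS^{1-p})$ are computed by the complexes $(\AS^{p,\bullet}(V),d'')$ and $({\AS^{1-p,1-\bullet}_{X,c}}^*(V),d''^*)$ respectively (the first is acyclic by fineness, Theorem \ref{Identification with singular cohomology}; the second by Lemma \ref{Lem compact support sheaf}, as flasque sheaves are acyclic for global sections), and $\PD$ is the map induced on these resolutions by the sheaf isomorphism, hence an isomorphism on all cohomology.

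The main obstacle I anticipate is the local computation of $\HH^{1-p,1-q}_c(\Omega)$ on a star-shaped neighborhood of a vertex and verifying that the integration pairing with $\LS^p(\Omega)$ (resp.\ $\HH^{p,0}(\Omega)$) is perfect there; this is where smoothness ($\val(z)=\dim(z)+1$, all weights $1$) is genuinely used, since for a non-smooth vertex the dimension count fails. I would handle this by invoking the corresponding result of \cite{JSS} for basic open subsets of smooth tropical spaces and checking that open subsets of smooth tropical curves are smooth tropical spaces in their sense, so that the only real work is the bookkeeping translating their framework into the sheaf-theoretic argument above.
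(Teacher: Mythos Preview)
Your approach is correct but takes a longer route than the paper. The paper's proof is a direct appeal to \cite[Theorem 4.33]{JSS}, which says that \emph{tropical manifolds} have $\PD$; the only work is then to verify that a smooth tropical curve is a tropical manifold in the sense of \cite[Definition 4.15]{JSS}, i.e.\ that every point has a neighborhood isomorphic to an open subset of $\T$ or of the Bergman fan of a matroid. This is done by noting that non-vertices live on open segments, vertices at infinity on half-lines in $\T$, and that a finite smooth vertex $z$ with $\val(z)=k+1$ is linearly isomorphic (using the balancing condition and $\dim(z)=\val(z)-1$) to a neighborhood of the origin in the Bergman fan of the uniform matroid $U_{2,k}$.

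What you do instead is unfold the local-to-global argument that \cite[Theorem 4.33]{JSS} packages: prove the sheaf map $\LS^p\to\GS^{1-p}$ is an isomorphism and that the dual complex is exact in positive degree, both checked on a basis of star-shaped neighborhoods, and then run the acyclic-resolution comparison. This is exactly the pattern of Lemma~\ref{Lem PD local}, which the paper reserves for the \emph{algebraic} setting. Your plan is sound, but notice that when you say you will ``invoke the corresponding result of \cite{JSS} for basic open subsets of smooth tropical spaces and check that open subsets of smooth tropical curves are smooth tropical spaces in their sense'', you are performing precisely the verification the paper does---so you end up doing the paper's work \emph{plus} the sheaf-theoretic glueing that \cite{JSS} has already done. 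One small correction: the reference you want for local $\PD$ in \cite{JSS} is not Theorem 3.22 (that is the identification with tropical cohomology) but rather the material in Section 4 there, culminating in Theorem 4.33.
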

\begin{proof}
Let $Y$ be a smooth tropical curve and $\Omega$ an open subset. 
Let $\CS$ be a polyhedral structure on $Y$. 
Since tropical manifolds in the sense of \cite[Definition 4.15]{JSS} have $\PD$ by \cite[Theorem 4.33]{JSS},
it is enough to show that any point $z \in \Omega$ has a neighborhood which 
is either isomorphic to an open subset of $\T$ or to an open subset of a Bergman fan $B(M)$ of a matroid $M$. 
If $z$ is not a vertex, it has a neighborhood which is isomorphic to an open interval, 
thus to an open subset of $\T$. 
If $z$ is a vertex at infinity, it has a neighborhood isomorphic to $[-\infty, b) \subset \T$ for some $b \in \R$. 
Now let $z$ be a vertex which is not at infinity and let $k$ be as in Definition \ref{Def. val, dim, glatt}. 
Let $U_{2,k}$ be the uniform matroid of rank 2 on $k$ Elements, i.e.~the base set is $\{1,\dots,k\}$ 
and the rank function is $A \mapsto \max \{ \# A, 2\}$. 
Following the construction of the Bergman fan in \cite[\S 2.4]{Shaw:IntMat} we find that $B(U_{2,k})$ 
is the fan whose rays are spanned by $-e_1,\dots,-e_k$ and  $\sum_{i=1}^ke_i$ where $e_i$ denotes 
the i-th unit vector in $\R^k$. 
Now, after translation of $z$ to the origin, $\nu_{z, \sigma_i} \mapsto  e_i$ for $i = 1,\dots,k$ 
provides a linear isomorphism of a neighborhood of $z$ with an open neighborhood of the origin in $B(U_{2,k})$.
Note here that $- \sigma_{z, \sigma_0} \mapsto \sum e_i$ by the balancing condition and 
that $\nu_{z, \sigma_i}$  for $i=1,\dots,k$ are linearly independent since $\dim(z) = \val(z) + 1$.  
\end{proof}

\begin{Const}  \label{Tropicalmodification}
We now describe the operation of tropical modification (for a more detailed introduction see \cite{BIMS15}).
Let $X \subset \T^r$ be a tropical curve and $P\colon X \rightarrow \R$ a continuous, piecewise affine function with integer slopes. 
The graph $\Gamma_X(P)$ of $P$ is a polyhedral $\R$-rational curve in $\T^{r+1}$.
Choosing a polyhedral structure $\CS$ on $X$ such that $P$ is affine on every edge and defining
the weight $m_{\Gamma_{\sigma}(P)} := m_\sigma$ for every $\sigma\in \CS$ makes $\Gamma_X(P)$ into a weighted $\R$-rational polyhedral curve.
It is however not balanced, because $P$ is only piecewise affine. 
Let $z \in X$ be a point where $P$ is not affine. 
If we add a line $\sigma_z := [ (z, P(z)), (z, -\infty) ]$, then there is a unique
weight $m_{\sigma_z}$ to make $\Gamma(P) \cup \sigma_z$ balanced at $z$. 
If we do this for every such $z\in X$, we obtain a tropical curve $Y$. 
The projection $\pi \colon \T^{r+1} \rightarrow \T^{r}$ restricts to a map $\delta \colon Y \rightarrow X$ and 
we call $\delta$ a \emph{tropical modification}.

Note that $\delta$ is a proper map in the sense of topological spaces.
\end{Const}

\begin{Prop} \label{prop:closedmodification}
Let $\delta\colon Y \rightarrow X$ be a tropical modification.
Let $V$ be an open subset of $X$ and $W = \delta^{-1}(V)$.  
Then  
\begin{align*}
\delta^*\colon \HH^{p,q}(V) \rightarrow \HH^{p,q}(W) \qquad  \text{ and} \qquad  
\delta^*\colon \HH^{p,q}_c(V) \rightarrow \HH^{p,q}_c(W)
\end{align*}
are isomorphisms which are compatible with the Poincar\'e duality map.
\end{Prop}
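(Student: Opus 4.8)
The plan is to exhibit $\delta^*$ as a quasi-isomorphism of complexes, by producing an explicit splitting together with a homotopy, and then to check compatibility with $\PD$ separately. First I would set up the picture on the level of sheaves: since $\delta$ is proper, $\delta_* \AS^{p,q}_Y$ and ${\delta_* \AS^{p,q}_{Y,c}}$ make sense, and by \autoref{Identification with singular cohomology} both $\HH^{p,q}(V)$ and $\HH^{p,q}(W)$ are computed by the resolutions $\LS^p \to \AS^{p,\bullet}$; likewise in the compactly supported case via \autoref{Lem compact support sheaf}. So it suffices to show that $\delta^*$ induces an isomorphism on stalks of the relevant sheaves, i.e.\ to work locally on $X$ around a point $z$. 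Near a point that is not a vertex where $P$ fails to be affine, $\delta$ is (locally) an isomorphism onto its image or differs from one only by the "trivial" modification where a half-line $\sigma_z$ with vertex at $-\infty$ is glued on; so the essential computation is purely local at such a vertex $z$, and there $W$ is $V$ with an extra half-open segment $[(z,P(z)),(z,-\infty)]$ attached at the point over $z$.

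The key local computation is then: for a connected open neighborhood $\Omega$ of such a vertex $z$ in $X$ with preimage $\widetilde\Omega = \delta^{-1}(\Omega)$, the pullback $\delta^*\colon \AS^{p,\bullet}(\Omega) \to \AS^{p,\bullet}(\widetilde\Omega)$ is a quasi-isomorphism, and similarly with compact supports. For surjectivity on $\HH^{p,q}$ one uses the boundary conditions in the definition of forms at the new vertex at infinity: a $d''$-closed form on $\widetilde\Omega$ restricts to something on the added half-line that is forced (by those boundary conditions together with the Poincaré lemma \autoref{Thm Poincare Lemma one vertex} applied on the half-line, which is basic open) to be $d''$-exact there, so after subtracting a $d''$-exact form we may assume it is supported away from $\sigma_z$, hence comes from $\Omega$ via the section of $\delta$; injectivity is the same argument applied to primitives. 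For compact supports the roles of cohomology degrees swap but the mechanism is identical: one uses that $\AS^{p,q}_c$ of the half-line with vertex at infinity has the cohomology of a point shifted appropriately, and that $\delta$ is proper so compactly supported sections push and pull correctly. I would also invoke \autoref{pullback injective} to know $\delta^*$ is injective on forms before passing to cohomology, which streamlines the exactness bookkeeping.

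Finally, compatibility with $\PD$ is a matter of unwinding \autoref{Def. PD}: one must check that for $\alpha \in \AS^{p,q}(V)$ and $\beta \in \AS^{1-p,1-q}_c(W)$ one has $\int_W \delta^*\alpha \wedge \beta = \int_V \alpha \wedge \delta_*\beta$, where $\delta_*$ is the pushforward of compactly supported forms (well defined since $\delta$ is proper and a tropical modification does not change dimension), together with the dual statement $\delta_* \delta^* = \id$ on compactly supported cohomology. Since integration is defined tropicalization-chart-wise and $\delta$ is an extended affine map on each polyhedron, this is the projection formula for the integration map, which reduces to the one-dimensional case on each edge — on the added half-line $\sigma_z$ the integral of any $(1,1)$-form is zero by the boundary conditions, so $\sigma_z$ contributes nothing and the identity collapses to the equality of integrals over $X$ and $Y$ away from $\sigma_z$, where $\delta$ is a bijection. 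The main obstacle I expect is the careful handling of the boundary conditions at the newly created vertex at infinity in the compactly supported local computation: one has to be sure that a compactly supported $d''$-closed form on $\widetilde\Omega$ whose class dies in $\HH^{p,q}_c(\Omega)$ already dies in $\HH^{p,q}_c(\widetilde\Omega)$, which requires producing a compactly supported primitive that still satisfies the vanishing/constancy conditions along $\sigma_z$ — this is exactly where \autoref{Thm Poincare Lemma one vertex} and the "basic open" structure of the half-line do the work, but it needs to be written out with care.
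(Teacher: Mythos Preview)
Your plan is sound in spirit but takes a very different route from the paper. The paper does not prove this proposition from scratch at all: for the isomorphism of $\HH^{p,q}$ and $\HH^{p,q}_c$ under $\delta^*$ it simply invokes external results (Shaw's invariance of tropical cohomology under modifications, via the identification with $\HH^{p,q}$ in JSS, and Smacka's thesis for the compactly supported version), and for compatibility with $\PD$ it observes that this amounts to $\int_W \delta^*\gamma = \int_V \gamma$ for $\gamma \in \AS^{1,1}_c(V)$, which is the tropical projection formula from Gubler. Your projection-formula argument for $\PD$ is essentially the same as the paper's; your cohomology-isomorphism argument is a self-contained local computation that is roughly what lies inside the cited references, so it buys independence from those sources at the cost of more bookkeeping.

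Two technical points to watch. First, your appeal to Proposition~\ref{pullback injective} is misplaced: that proposition concerns the pullback $\varphi_{\trop}^*$ along a tropicalization map for an $\A$-tropical chart, not the pullback along a tropical modification $\delta$. Injectivity of $\delta^*$ on forms is nonetheless immediate because $\delta$ is surjective onto $V$ and admits a continuous section (the graph of $P$), but you should argue it directly. Second, your reduction ``it suffices to show $\delta^*$ induces an isomorphism on stalks'' needs a sentence of justification: you are implicitly using that $\delta_*\AS^{p,\bullet}_Y$ is still an acyclic resolution of $\delta_*\LS^p_Y \cong \LS^p_X$ on $V$ (fineness survives pushforward along the proper map $\delta$), so that $\HH^q(V,\LS^p_X)$ can be computed by either complex. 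With those two points tightened, your local analysis at the modification vertex (using the boundary conditions at the new infinite vertex and Theorem~\ref{Thm Poincare Lemma one vertex} on the attached half-line) is exactly the right mechanism.
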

\begin{proof}
The first statement follows from \cite[Theorem 4.13]{Shaw:Surfaces}
using the identification of $\HH^{p,q}$ with tropical cohomology  \cite[Theorem 3.22 \& Proposition 3.24]{JSS}.
For a proof which is closer to our setting and also takes $\HH^{p,q}_c$ into account, we refer to \cite[Corollary 1.58]{Smacka}. 

That this is compatible with the $\PD$ map is just saying that integration of a $(1,1)$-form with compact support commutes with pullback along $\delta$. 
This follows from the tropical projection formula \cite[Proposition 3.10]{Gubler}.
\end{proof}

\subsection{Non-archimedean Mumford curves}\label{Section 2.2}
In this subsection, we recall the definition of Mumford curves and give a further characterization of them
which is needed in Sections \ref{Section 4} and \ref{Section 5}.

\begin{defn}\label{Def. Genus}
We say that an analytic space $Y$ is \emph{locally isomorphic to $\mathbb{P}^{1, \an}$} if there 
is a cover of $Y$ by open subsets which are isomorphic to open subsets of $\mathbb{P}^{1, \an}$ in the sense of analytic spaces.

Let $X$ be a smooth algebraic curve over $K$ and $x\in X^{\an}$.
We denote by $\mathscr{H}(x)$ the completed residue field at $x$ and by  $\widetilde{\mathscr{H}}(x)$ its residue field.
The point $x$ is said to be \emph{of type 2} if $\widetilde{\mathscr{H}}(x)$ is of transcendence degree $1$ over $\tilde{K}$, 
the residue field of $K$. 
If this is the case, the \emph{genus} of $x$ is defined as the genus of the smooth projective $\tilde{K}$-curve with function field 
$\widetilde{\mathscr{H}}(x)$.

Note that there are only finitely many points of type $2$ of positive genus in $\Xan$ \cite[Remark 4.18]{BPR2}.
\end{defn}

\begin{prop} \label{Prop Thm Mumford curve}
Let $X$ be a smooth curve and $V \subset \Xan$ an open subset. 
Then $V$ is locally isomorphic to $\mathbb{P}^{1, \an}$ if and only if it does not contain any point of type $2$ with positive genus.
\end{prop}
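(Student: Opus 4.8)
The plan is to prove both implications by a careful analysis of the local structure of $\Xan$ at a point, using the semistable reduction theorem and Berkovich's classification of curves. The key structural input is that for a point $x \in \Xan$, a neighborhood basis of $x$ is controlled by the skeleton of a semistable model, and the ``shape'' of such a neighborhood depends precisely on the genus of $x$ when $x$ is of type $2$.

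For the direction that ``locally isomorphic to $\mathbb{P}^{1,\an}$'' implies ``no type $2$ point of positive genus'', I would argue by contraposition. Suppose $x \in V$ is a type $2$ point of genus $g(x) > 0$. The genus $g(x)$ is an invariant of the completed residue field $\widetilde{\mathscr{H}}(x)$, hence of the local analytic structure of $\Xan$ near $x$: more precisely, if $W \ni x$ is any open neighborhood and $\iota\colon W \hookrightarrow \mathbb{P}^{1,\an}$ is an open immersion, then $\iota(x)$ would be a point of $\mathbb{P}^{1,\an}$ with $\widetilde{\mathscr{H}}(\iota(x)) \cong \widetilde{\mathscr{H}}(x)$, so $\iota(x)$ would be a type $2$ point of $\mathbb{P}^{1,\an}$ of positive genus. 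But every type $2$ point of $\mathbb{P}^{1,\an}$ has residue field a rational function field $\tilde K(t)$ (since all type $2$ points of $\mathbb{P}^{1}$ lie in the skeleton of a model whose special fiber components are all $\mathbb{P}^1$'s), hence genus $0$ --- a contradiction.

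For the converse, assume $V$ contains no type $2$ point of positive genus; I want to cover $V$ by opens isomorphic to opens of $\mathbb{P}^{1,\an}$. Fix $x \in V$. If $x$ is not of type $2$ (type $1$, $3$, or $4$), then $x$ has a fundamental system of neighborhoods that are virtual open discs or annuli, which embed into $\mathbb{P}^{1,\an}$, so we are done at such points. If $x$ is of type $2$ with $g(x) = 0$, choose a semistable model $\mathcal{X}$ of (a suitable affine or projective curve containing a neighborhood of $x$) such that $x$ corresponds to a vertex $v$ of the skeleton whose associated component $C_v$ is a smooth rational curve; this is possible precisely because $g(x) = 0$. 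A small enough neighborhood $W$ of $x$ in $\Xan$ is then the preimage of a small open star-shaped neighborhood of $v$ in the skeleton, and such a $W$ is determined up to isomorphism by the combinatorial star of $v$: it is built from $C_v \cong \mathbb{P}^1$ with finitely many points removed or ``opened up'' into discs/annuli along the edges at $v$. Since the analogous configuration occurs at a suitable type $2$ point of $\mathbb{P}^{1,\an}$ (one can realize any finite star of a rational vertex inside $\mathbb{P}^{1,\an}$ by choosing an appropriate rational map, or directly by a residue-disc argument), $W$ is isomorphic to an open subset of $\mathbb{P}^{1,\an}$. Shrinking $V$ need not be an issue since we only need a cover. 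Finally one checks that these $W$'s can be chosen inside $V$, which holds since the type $2$ points of $V$ all have genus $0$ and the non-type-$2$ points contribute only disc/annulus neighborhoods.

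The main obstacle is the type $2$, genus $0$ case: making precise that a star-shaped neighborhood of a rational vertex of the skeleton embeds into $\mathbb{P}^{1,\an}$, with all the edge-data (the ``opening up'' into virtual discs and annuli) matching what happens in $\mathbb{P}^{1,\an}$. This is a local, purely geometric statement about semistable formal models and their generic fibers --- essentially that the formal fiber of a smooth rational point on a semistable special fiber, together with its boundary annuli, is the same whether it sits in $X$ or in $\mathbb{P}^1$. As the acknowledgments indicate, this is where Berkovich's input is invoked: one cites the description of such neighborhoods (e.g.\ via \cite{BPR2} or Berkovich's smoothness/local structure results) rather than reproving it, so the write-up reduces to assembling these local pictures into an open cover of $V$.
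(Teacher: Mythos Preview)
Your proposal is correct and follows essentially the same route as the paper. Both directions are argued identically in spirit: the forward implication uses that the genus of a type~$2$ point is an invariant of $\widetilde{\mathscr{H}}(x)$ and that all type~$2$ points of $\mathbb{P}^{1,\an}$ have genus~$0$ (the paper cites \cite[Proposition~2.3]{BR}); the converse reduces to a structural result of Berkovich giving, around any point with $g(x)=0$, an elementary neighborhood isomorphic to an open disc, an open annulus, or $\mathbb{P}^{1,\an}$ minus finitely many disjoint closed discs. The only difference is packaging: the paper cites this directly as \cite[Proposition~2.2.1]{BerkoIntegration}, whereas you sketch the semistable-model picture behind it before invoking Berkovich; your ``main obstacle'' paragraph is exactly what that proposition encapsulates.
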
 

\begin{proof}
By \cite[Proposition 2.3]{BR}, every type $2$ point of $\mathbb{P}^{1, \an}$ has genus zero. 
Hence, if $V$ is locally isomorphic to $\mathbb{P}^{1, \an}$, all its type $2$ points have genus $0$.

Now, we assume that every type $2$ point in $V$ has genus zero. 
Then \cite[Proposition 2.2.1]{BerkoIntegration} says that we have an open covering of $V$ by open discs, 
open annuli and sets which are isomorphic to $\mathbb{P}^{1, \an}$ without the disjoint union of a finite number of closed discs. 
Note that we have used that the definition of the genus of a point in \cite{BerkoIntegration} is equal to the one in 
Definition \ref{Def. Genus}  \cite[p. 31]{BerkoIntegration}.
Hence, $V$ is locally isomorphic to $\mathbb{P}^{1, \an}$.
\end{proof}

\begin{defn}\label{Definitions Mumford cruve}
A smooth projective curve  $X$ of genus $g\geq 1$ is called a \textit{Mumford curve} 
if there is a semistable model $\XS$ such that all irreducible components of the special fibre are rational (cf.~\cite[Theorem 4.4.1]{BerkovichSpectral}).
\end{defn}

Note that there exist no Mumford curves over a trivially valued field $K$ since otherwise $K^\circ = K$ and 
so the only semistable model of $X$ is $X$ itself, which cannot be rational due to $g\geq 1$.

\begin{satz}\label{Satz Mumford curve}
Let $X$ be a smooth projective curve over $K$ of genus $g$. Then the following properties are equivalent:
	\begin{enumerate}
		\item $X$ is a Mumford curve or is isomorphic to $\mathbb{P}^{1}$.
		\item $\Xan$ is locally isomorphic to $\mathbb{P}^{1, \an}$.
		\item $h^{0,1}(\Xan) = g$.
	\end{enumerate} 
\end{satz}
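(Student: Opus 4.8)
The plan is to prove the cycle of implications $(1) \Rightarrow (2) \Rightarrow (3) \Rightarrow (1)$. The implication $(2) \Rightarrow (3)$ is the one that uses the most machinery and $(3) \Rightarrow (1)$ is the place where I expect the main difficulty to lie.

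\emph{Step 1: $(1) \Rightarrow (2)$.} If $X \cong \mathbb{P}^1$, then $\Xan = \mathbb{P}^{1,\an}$ is trivially locally isomorphic to $\mathbb{P}^{1,\an}$. If $X$ is a Mumford curve, then by Proposition \ref{Prop Thm Mumford curve} it suffices to show that $\Xan$ contains no type $2$ point of positive genus. This is a known structural fact: for a Mumford curve there is a semistable model $\XS$ whose special fibre has only rational components, the skeleton of $\Xan$ is a metric graph, and the type $2$ points of positive genus correspond exactly to the components of positive genus in the stable model (see \cite[Remark 4.18]{BPR2}). Since all components are rational, every type $2$ point has genus $0$. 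I would cite the relevant description of the Berkovich space of a Mumford curve and \cite[Remark 4.18]{BPR2} here.

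\emph{Step 2: $(2) \Rightarrow (3)$.} Assume $\Xan$ is locally isomorphic to $\mathbb{P}^{1,\an}$. By Proposition \ref{Prop Thm Mumford curve} this means $V := \Xan$ contains no type $2$ point of positive genus, so Theorem \ref{Theorem Einl PD} (Poincar\'e duality) applies: $\HH^{0,1}(\Xan) \cong \HH^{1,0}_c(\Xan)^*$. On the other hand, by Theorem \ref{Identification with singular cohomology} we have $h^{0,1}(\Xan) = h^1_{\sing}(\Xan)$, and since $X$ is smooth projective, $\Xan$ is contractible onto its skeleton, a finite metric graph whose first Betti number equals $g$ (for a Mumford curve / $\mathbb{P}^1$ the skeleton carries the full topology because there are no positive-genus vertices). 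Hence $h^1_{\sing}(\Xan) = g$, giving $h^{0,1}(\Xan) = g$. I would assemble these citations and note that the skeleton's first Betti number is $g$ precisely because $\Xan$ is locally $\mathbb{P}^{1,\an}$; alternatively one can combine Poincar\'e duality with the computation of $\HH^{1,0}$.

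\emph{Step 3: $(3) \Rightarrow (1)$.} This is the hard direction and should proceed by contraposition. Suppose $X$ is neither $\mathbb{P}^1$ nor a Mumford curve. Then $g \geq 1$ and $X$ is not a Mumford curve, so by Definition \ref{Definitions Mumford cruve} no semistable model has all components rational; equivalently, by Proposition \ref{Prop Thm Mumford curve}, $\Xan$ contains a type $2$ point $x_0$ of positive genus $g_0 \geq 1$. The first Betti number $b_1$ of the skeleton then satisfies $b_1 < g$, since the stable model's arithmetic genus decomposes as $b_1 + \sum_{x} g(x)$ over the positive-genus type $2$ points. I would then argue that $h^{0,1}(\Xan) = h^1_{\sing}(\Xan) = b_1 < g$: the singular cohomology of $\Xan$ only sees the skeleton (a deformation retract), which is a graph, so $h^1_{\sing}(\Xan) = b_1$. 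The main obstacle is making the accounting $g = b_1 + \sum_x g(x)$ precise and citing it correctly — this is the genus formula for the skeleton / stable reduction, which I would take from \cite[Remark 4.18]{BPR2} or the Berkovich structure theory; one must be careful that ``genus'' of a type $2$ point as in Definition \ref{Def. Genus} is the one appearing in that formula. Once this is in place, $h^{0,1}(\Xan) < g$ contradicts (3), completing the contrapositive and hence the cycle.
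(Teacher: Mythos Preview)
Your cycle $(1)\Rightarrow(2)\Rightarrow(3)\Rightarrow(1)$ is essentially correct, but it is organized differently from the paper and leans on different inputs. The paper does not argue cyclically: it shows $(i)\Leftrightarrow(ii)$ using Berkovich's book (Theorem~4.4.1 of \cite{BerkovichSpectral} for $(i)\Rightarrow(ii)$ via the universal cover, and the correspondence between irreducible components of the special fibre and type~$2$ points together with \cite[Proposition 2.4.4]{BerkovichSpectral} for $(ii)\Rightarrow(i)$), and it shows $(i)\Leftrightarrow(iii)$ by invoking \cite[Theorem 4.6.1]{BerkovichSpectral}, which characterizes exactly when the skeleton has first Betti number $g$ (Mumford curves and principal homogeneous spaces over Tate elliptic curves, the latter collapsing to Mumford curves since $K$ is algebraically closed). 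Your route instead relies throughout on the genus formula $g = b_1(\Sigma) + \sum_x g(x)$ for the skeleton, which you take from \cite[Remark 4.18]{BPR2}; this is a legitimate alternative and makes both $(2)\Rightarrow(3)$ and $(3)\Rightarrow(1)$ transparent once one knows that $\Xan$ retracts onto its skeleton and that $h^{0,1}=h^1_{\sing}$.

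Two points deserve cleanup. First, drop the appeal to Poincar\'e duality in Step~2: it is not needed (you immediately switch to the singular-cohomology identification), and in the paper's logical order Theorem~\ref{Satz Mumford curve} is a \emph{preliminary} used later in establishing PD for Mumford curves (Corollary~\ref{korPD}), so citing PD here reads as circular even though the main PD statement does not actually depend on this theorem. Second, in Step~3 your sentence ``equivalently, by Proposition~\ref{Prop Thm Mumford curve}, $\Xan$ contains a type~$2$ point of positive genus'' is not what that proposition says; what you really want is: take any semistable model (which exists by semistable reduction over the algebraically closed complete $K$), note that by the negation of Definition~\ref{Definitions Mumford cruve} it must have a non-rational component, and then use (as in the paper's proof of $(ii)\Rightarrow(i)$) that such a component corresponds to a type~$2$ point of positive genus via \cite[Proposition 2.4.4]{BerkovichSpectral}. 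With these fixes your argument goes through; the paper's proof is shorter because it outsources the equivalence $(i)\Leftrightarrow(iii)$ wholesale to \cite[Theorem 4.6.1]{BerkovichSpectral} rather than unpacking the genus formula.
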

\begin{proof}
If $g=0$, then $X$ is isomorphic to $\mathbb{P}^1$
 and all three properties are satisfied. Indeed, the third one is true since 
$h^{0,1}(\mathbb{P}^{1, \an})=h^1_{\sing}(\mathbb{P}^{1, \an})$ by 
Theorem \ref{Identification with singular cohomology} and $\mathbb{P}^{1, \an}$ is contractible.

If $g\geq 1$, property i) implies ii) by \cite[Theorem 4.4.1]{BerkovichSpectral}.  
Note here that 
for any analytic space $Y$, the topological universal cover $\pi \colon Z \to Y$ of $Y$ is given by the analytic structure 
which makes $\pi$ into a local isomorphism. 
Thus $\Xan$ is locally isomorphic to its universal cover.

On the other hand, if ii) is satisfied, we know from Proposition \ref{Prop Thm Mumford curve} that every type 2 point in $\Xan$ has genus zero. 
Let $\XS$ be any semistable model.
Then every irreducible component of the special fibre corresponds to a type 2 point $x\in \Xan$, and we denote this component by $C_x$. 
The curve $C_x$ is birationally equivalent to the smooth projective $\tilde{K}$-curve with function field 
$\widetilde{\mathscr{H}}(x)$ by \cite[Proposition 2.4.4]{BerkovichSpectral}. 
We know that the latter curve is of genus zero, and so $C_x$ is as well. Thus, every irreducible component $C_x$ is rational.

Since the skeleton of a Berkovich space is a deformation retract, 
iii) is equivalent to the skeleton of $\Xan$ having first Betti number equal to $g$  by Theorem \ref{Identification with singular cohomology}.
Thus we know from \cite[Theorem 4.6.1]{BerkovichSpectral}, 
that we have  $h^{0,1}(\Xan) = g$ if and only if $X$ is a Mumford curve or a principal homogeneous space over a Tate elliptic curve. 
The first sentence in the proof of \cite[Lemma 4.6.2]{BerkovichSpectral} shows that if $K$ is algebraically closed, 
the only principal homogeneous space over any Tate elliptic curve is the curve itself. 
Since Tate elliptic curves are indeed Mumford curves, we have equivalence of i) and iii) also for $g \geq 1$.
Thus all properties are equivalent.
\end{proof}

\section{Cohomology of open subsets of the Berkovich affine line} \label{section modifications}

The goal of this section is to get a better description of the cohomology of a basis of open subsets of $\A^{1,\an}$ 
(cf.~Theorem \ref{thm Cohom. lineare trop.}). 
We use this description to prove Poincar\'e duality for a special class of open subsets of the analytification 
$\Xan$ of a smooth algebraic curve $X$ in Section \ref{Section 4}.

\subsection{Tropicalization of linear embeddings and refinements} 
In this section, we consider a special class of embeddings of the affine line into affine spaces, which are called linear embeddings. 
We show that their tropicalizations are smooth tropical curves and 
that their refinements induce tropical modifications (Theorem \ref{theorem tropical manifold} and Theorem \ref{theorem refinement modification}).

\begin{defn}
A closed embedding $\varphi\colon \A^1 \rightarrow \A^r$ is called a \emph{linear embedding} 
if $\varphi$ is given by linear polynomials $(x-a_i)_{i\in [r]}$, where $[r]:=\{1,\ldots,r\}$. 

Another linear embedding $\varphi'\colon \A^1 \rightarrow \A^{r'}$ is called a \emph{linear refinement} 
if $\varphi =\pi \circ \varphi'$, where $\pi$ is the projection to a set of coordinates. 
\end{defn}
\begin{lem}\label{Prop weights}
Let $\varphi\colon \A^1 \rightarrow \A^r$ be a linear embedding. 
Then all weights on $\Trop_\varphi(\A^1)$ are equal to 1. 
\end{lem}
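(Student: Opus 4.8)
The statement concerns a linear embedding $\varphi\colon\A^1\to\A^r$ given by the linear polynomials $x-a_1,\dots,x-a_r$, and claims that the canonical weights on the tropical curve $\Trop_\varphi(\A^1)$ are all $1$. The plan is to analyze the tropicalization map explicitly and compute the multiplicity of each edge. First I would observe that $\varphi_{\trop}\colon \A^{1,\an}\to\T^r$ sends a seminorm $\rho$ to $(\log\rho(x-a_1),\dots,\log\rho(x-a_r))$, and that on the dense subset of points coming from $K$-rational points (or more usefully, on the Gauss-type points), the image is governed by the quantities $|x-a_i|$. Concretely, if we write $t_i := \val(a_i)$ (the ``position'' of $a_i$ on the standard skeleton of $\A^{1,\an}$) and use the coordinate on the skeleton given by the radius, then $\Trop_\varphi(\A^1)$ is the image of a tree under a piecewise-linear map with integer slopes, and each edge of the tree maps with some slope vector in $\Z^r$; the weight of the corresponding edge downstairs is (by the definition in \cite[13.10]{Gubler2}) the sum over preimage edges of the local degree, i.e. essentially the index of the sublattice generated by the primitive direction vectors.

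The key computation is to identify these direction vectors. Along the skeleton of $\A^{1,\an}$, away from the finitely many points $\trop(a_i)$ (branch points) and $\infty$, the map $\varphi_{\trop}$ is locally of the form: moving toward $\infty$ in the variable $x$, each coordinate $\log\rho(x-a_i)$ either increases with slope $1$ (if we are ``above'' $a_i$ on the tree) or stays constant (if we have branched away from $a_i$). Thus every primitive tangent direction of $\Trop_\varphi(\A^1)$ at an interior point of an edge is a $0/1$-vector, hence already primitive, and the corresponding edge is covered with multiplicity exactly once — so its weight is $1$. I would make this precise by noting that $\Trop_\varphi(\A^1)$ is the tropicalization of the linear space $\im(\varphi)$, which is the affine line $\{(u_1,\dots,u_r) : u_i - u_j = a_j - a_i\}$; its tropicalization is a tropical line (in the sense of Bergman fans of the uniform matroid), whose edges are well known to be spanned by $\pm$(sums of standard basis vectors), all of which are primitive, and the standard tropical line carries all weights $1$. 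Alternatively, and perhaps more cleanly within this paper's framework, one can invoke that refinements induce surjections on tropicalizations compatible with the structure, and reduce to the case $r=1$ (where $\Trop_\varphi(\A^1)=\T$ with its unique weight $1$) together with the behavior under the diagonal-type refinement maps.

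\textbf{Main obstacle.} The principal subtlety is bookkeeping the weight at vertices where several branches of the tree collapse — i.e. at points $\trop(a_i)=\trop(a_j)$ — and verifying that even there no edge acquires a multiplicity greater than $1$. This requires checking that the tropicalization map restricted to each edge of the source tree is \emph{unit speed} onto its image edge (not a degree-$d$ cover with $d>1$), which follows because the slopes are $0/1$-valued and at least one coordinate has slope exactly $1$ on any edge not contracted to $\infty$; hence the map is injective with primitive image direction on each such edge, and the Sturmfels–Speyer-type weight formula gives weight $1$. I would organize the proof by first reducing, via the projection-compatibility of transition maps, to understanding the structure of $\Trop_\varphi(\A^1)$ as the (classical, not stable) tropicalization of an affine-linear subspace of $\A^r$, then cite the standard description of such tropicalizations as (shifted) Bergman fans of uniform matroids with all weights $1$, matching the weights from \cite[13.10]{Gubler2} by the compatibility of the two weight conventions already used in the paper (cf.\ Theorem \ref{Theorem tropical curve} and its proof).
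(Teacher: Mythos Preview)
Your proposal is correct, and in fact one of your alternatives is precisely the paper's argument. The paper fixes an edge $\sigma$ of $\Trop_\varphi(\A^1)$, chooses a coordinate $i$ on which $\sigma$ is non-constant, views $\varphi$ as a refinement of the single-coordinate embedding $\varphi'\colon\A^1\to\A^1$ given by $x-a_i$, and then invokes the Sturmfels--Tevelev multiplicity formula (pushforward of tropical cycles, \cite[4.10 \& Proposition 4.11]{Gubler}) to transport the obvious weight~$1$ on $\Trop_{\varphi'}(\A^1)=\T$ back to $\sigma$. This is exactly your ``reduce to the case $r=1$'' alternative, with the missing name for the compatibility tool being Sturmfels--Tevelev rather than a direct appeal to refinement maps.

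Your primary approach---computing the direction vectors explicitly as $0/1$-vectors and checking that the tropicalization map is unit-speed on each edge---is also correct and is essentially how one would verify the Sturmfels--Tevelev computation by hand in this case. It is more laborious because it requires you to unpack the weight definition from \cite[13.10]{Gubler2} and argue injectivity edge-by-edge, whereas the paper's projection argument sidesteps this by packaging everything into one citation. Your third route (identifying $\Trop_\varphi(\A^1)$ with a translated Bergman fan of a uniform matroid and citing that such fans carry weight~$1$) would also work, but imports more machinery than needed and duplicates what the paper later does in Theorem~\ref{theorem tropical manifold}. The paper's choice is the most economical of the three.
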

\begin{proof}
Let $\varphi$ be given by $x - a_1,\ldots,x-a_r$ and  $\sigma$ an edge of $\Trop_\varphi(\A^1)$. 
Then there exists some coordinate $i$ such that the restriction of the $i$-th coordinate function to $\sigma$ is not constant. 
Obviously, $\varphi$ is a refinement of the linear $\A$-tropical chart $\varphi'\colon \A^1 \rightarrow \A^1$ 
which is given by $x - a_i$. 
The weights on $\Trop_{\varphi'}(\A^1)$ are obviously all $1$ and thus so is the one on $\sigma$ by the Sturmfels-Tevelev multiplicity formula 
and the construction of the pushforward of tropical cycles \cite[4.10 \& Proposition 4.11]{Gubler}.
Note here, that by the choice of $i$, $\sigma$ is not contracted to a point when projecting from $\Trop_{\varphi}(\A^1)$ to $\Trop_{\varphi'}(\A^1)$.
\end{proof}

\begin{lem} \label{lem seminorm}
Let $R$ be a commutative ring with $1$ and $\vert \, . \, \vert$ be a non-archimedean multiplicative seminorm on $R$. 
Let $x, a_i, a_j, b \in R$ such that  $\vert x- a_i\vert \leq \vert x - a_j \vert$ and $\vert x - a_j \vert \neq \vert b - a_j \vert$. 
Then we have 
\begin{align*}
\max ( \vert x - a_i \vert, \vert b - a_i \vert) = \max (\vert x - a_j \vert, \vert b - a_j \vert). 
\end{align*}
\end{lem}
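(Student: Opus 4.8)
The plan is to reduce the statement to the standard fact that in a non-archimedean seminorm the strictly largest term dominates: if $|u| \neq |v|$, then $|u + v| = \max(|u|, |v|)$. This is valid for seminorms by the usual argument — assuming $|u| > |v|$, the inequality $|u| = |(u + v) - v| \le \max(|u+v|, |v|)$ forces $|u+v| \ge |u|$, while $|u+v| \le \max(|u|,|v|)$ is just the ultrametric triangle inequality.

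First I would apply this to the identity $x - b = (x - a_j) - (b - a_j)$: since $|x - a_j| \neq |b - a_j|$ by hypothesis, we obtain $|x - b| = \max(|x - a_j|, |b - a_j|)$. It therefore suffices to prove $\max(|x - a_i|, |b - a_i|) = |x - b|$. For the inequality ``$\ge$'' I would use the other decomposition $x - b = (x - a_i) - (b - a_i)$ together with the ultrametric triangle inequality, giving $|x - b| \le \max(|x - a_i|, |b - a_i|)$. For ``$\le$'' I would estimate the two terms separately: the hypothesis $|x - a_i| \le |x - a_j|$ combined with the formula for $|x-b|$ gives $|x - a_i| \le |x - b|$, and then writing $b - a_i = (b - x) + (x - a_i)$ and applying the ultrametric inequality yields $|b - a_i| \le \max(|x - b|, |x - a_i|) = |x - b|$. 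Combining the two inequalities gives $\max(|x-a_i|,|b-a_i|) = |x-b| = \max(|x-a_j|,|b-a_j|)$, as claimed.

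There is no substantial obstacle here; the only points requiring a little care are checking that the ``strongest wins'' principle holds for seminorms and not only for norms (it does, with the proof sketched above, even when some values vanish), and keeping track of the roles of the two hypotheses — the inequality $|x - a_i| \le |x - a_j|$ is what keeps both index-$i$ terms at or below $|x-b|$, while the non-degeneracy $|x - a_j| \neq |b - a_j|$ is precisely what pins $|x - b|$ down to $\max(|x-a_j|,|b-a_j|)$.
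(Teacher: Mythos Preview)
Your proof is correct and follows essentially the same idea as the paper's: both arguments establish that each side equals $\lvert x-b\rvert$, using the ``strongest wins'' principle on the $j$-side (where the hypothesis guarantees a strict maximum). The paper handles the $i$-side by a case split on whether $\max(\lvert x-a_i\rvert,\lvert b-a_i\rvert)$ is attained uniquely, whereas you avoid this split by directly bounding $\lvert x-a_i\rvert$ and $\lvert b-a_i\rvert$ above by $\lvert x-b\rvert$; this is a minor streamlining of the same argument.
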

\begin{proof}
If the maximum on the left hand side is attained uniquely, 
then both sides equal $\vert x - b \vert$ by the ultrametric triangle inequality, and so the lemma holds.

If not, we have 
\begin{align*}
\vert x-b\vert & \leq \max (\vert x-a_i\vert, \vert b-a_i\vert)=\vert x-a_i\vert \\ & \leq \vert x-a_j\vert \leq \max (\vert x-a_j\vert, \vert b-a_j\vert)=\vert x-b\vert,
\end{align*}
and so equality holds as well.
\end{proof}

We fix a closed embedding $\varphi\colon \A^1 \to \A^r$ which is given by linear polynomials $x- a_1, \dots, x - a_r$. 
For $b \in K$ we want to understand the behavior of the refinement $\varphi'\colon \A^1 \rightarrow \A^{r+1}$ which is given by 
$x- a_1, \dots, x - a_r, x- b$. 
We will for the moment assume that $b \neq a_j$  for all $j\in [r]$. 

\begin{defn}\label{definition P}
We define a map $P := P_{a_1,\dots,a_r, b} \colon \Trop_{\varphi}(\A^1) \rightarrow \R$
in the following way: For $z \in \Trop_{\varphi}(\A^1)$ choose $i \in [r]$ such that $z_i \leq z_j$ for all $j \in [r]$ 
and define $P(z) :=  \max(z_i, \log \vert b - a_i \vert )$.
\end{defn}

The next proposition shows the basic properties of $P$.

\begin{prop} \label{Lemma iteriertes Maximum} \label{theorem iterated maximum}
In the situation above, we have: 
\begin{enumerate}
\item
$P(z)$ is well defined, independent of the choice of $i$. 
\item
For $j \in [r]$ we have 
\begin{align*}
P(z) = \max (z_j, \log \vert b - a_j \vert ) 
\end{align*}
if the maximum on the right hand side is attained uniquely.
\end{enumerate}
\end{prop}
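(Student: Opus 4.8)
The plan is to unwind the definition of $P$ and reduce everything to repeated application of Lemma \ref{lem seminorm}. Recall that every point $z \in \Trop_\varphi(\A^1)$ is of the form $z = \varphi_{\trop}(\rho) = (\log\rho(x-a_1),\dots,\log\rho(x-a_r))$ for some multiplicative seminorm $\rho$ on $\OS_{\A^1}(\A^1) = K[x]$ extending the absolute value on $K$; fix such a $\rho$. Then $z_i = \log\rho(x-a_i)$, and the quantity $\log\vert b - a_i\vert$ appearing in Definition \ref{definition P} equals $\log$ of the value $\vert b - a_i\vert$ of the constant $b - a_i \in K$. The strategy for part i) is to show that for \emph{any} index $i$ realizing the minimum $z_i = \min_j z_j$, the value $\max(z_i, \log\vert b - a_i\vert)$ equals $\log\rho(x - b)$ \emph{whenever} the ultrametric inequality forces this, and to treat the remaining boundary case separately; part ii) is then a clean consequence.

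First I would prove part ii). Fix $j$ and suppose $\max(z_j, \log\vert b - a_j\vert)$ is attained uniquely, i.e. $\rho(x - a_j) \neq \vert b - a_j\vert$. Pick $i$ with $z_i \le z_j$ minimal. I want to apply Lemma \ref{lem seminorm} with $R = K[x]$, the seminorm $\rho$, the same $x$, the constants $a_i, a_j$, and $b$. The hypotheses of the lemma are exactly $\rho(x - a_i) \le \rho(x - a_j)$ (which holds since $z_i \le z_j$) and $\rho(x - a_j) \neq \vert b - a_j\vert$ (our uniqueness assumption), so the lemma gives
\begin{align*}
\max(\rho(x-a_i), \vert b - a_i\vert) = \max(\rho(x-a_j), \vert b - a_j\vert).
\end{align*}
Taking $\log$ and comparing with Definition \ref{definition P}, the left side is (after $\log$) exactly $P(z)$ computed with the index $i$, and the right side is $\max(z_j, \log\vert b - a_j\vert)$. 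This proves ii), \emph{provided} we already know $P(z)$ is well defined; so I would actually prove i) first, or interleave them.

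For part i), let $i, i'$ both realize the minimum, so $z_i = z_{i'} \le z_j$ for all $j$, and I must show $\max(z_i, \log\vert b - a_i\vert) = \max(z_{i'}, \log\vert b - a_{i'}\vert)$. If either of these maxima is attained uniquely, say $\rho(x - a_i) \neq \vert b - a_i\vert$, then Lemma \ref{lem seminorm} (applied with the pair $a_{i'}, a_i$, using $\rho(x-a_{i'}) = \rho(x-a_i) \le \rho(x-a_i)$ trivially — here one swaps the roles so that the ``$\neq$'' hypothesis lands on the right index) gives equality of the two maxima directly. The genuinely remaining case is when \emph{both} maxima are attained non-uniquely, i.e. $\rho(x-a_i) = \vert b - a_i\vert$ and $\rho(x-a_{i'}) = \vert b - a_{i'}\vert$; but then $\max(z_i,\log\vert b-a_i\vert) = z_i = z_{i'} = \max(z_{i'},\log\vert b-a_{i'}\vert)$ since $z_i = z_{i'}$ by choice, so equality holds trivially. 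This exhausts all cases, so $P$ is well defined, and then ii) follows as above. The main obstacle I anticipate is purely bookkeeping: making sure that in each invocation of Lemma \ref{lem seminorm} the inequality hypothesis $\lvert x-a_i\rvert \le \lvert x - a_j\rvert$ and the non-equality hypothesis $\lvert x - a_j\rvert \neq \lvert b - a_j\rvert$ are attached to the correct indices (the lemma is not symmetric in $a_i$ and $a_j$), and handling the degenerate overlap cases where a maximum is attained non-uniquely so the lemma does not apply. No deeper input is needed beyond Lemma \ref{lem seminorm} and the fact that points of $\Trop_\varphi(\A^1)$ come from seminorms.
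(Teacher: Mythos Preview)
Your proposal is correct and follows essentially the same route as the paper: both arguments pick a seminorm $\rho$ mapping to $z$, apply Lemma~\ref{lem seminorm} with $R=K[x]$ to obtain part~ii), and then deduce part~i) by observing that for two minimal indices the only case not covered by the lemma is when both maxima are attained non-uniquely, where the equality is trivial since $z_i=z_{i'}$. Your write-up is simply more explicit about the bookkeeping (which index plays which role in the asymmetric Lemma~\ref{lem seminorm}) and about the logical ordering of i) and ii).
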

\begin{proof}
The second statement follows from Lemma \ref{lem seminorm} by choosing $R = K[x]$ and 
letting $\vert \, . \, \vert$ be a point in $\A^{1,\an}$ which maps to $z$. 
The first then follows from the second and the fact that for two minima $z_i, z_j$
the equality $\max (z_i, \log \vert b - a_i \vert)=\max (z_j, \log \vert b - a_j \vert)$ is trivial if 
$z_i = \log \vert b - a_i \vert = z_j = \log \vert b - a_j \vert$.   
\end{proof}

\begin{lem}\label{Lemma P properties}
The function $P$ is continuous.
It is affine at every point except $(\log \vert a_i - b\vert)_{i \in [r]}$. 
In particular, $P$ is piecewise affine.
\end{lem}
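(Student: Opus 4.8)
The plan is to analyze $P$ locally on each edge of $\Trop_\varphi(\A^1)$ and show it agrees with an affine function there, except at the single distinguished point $z_0 := (\log\vert a_i - b\vert)_{i \in [r]}$. First I would fix a polyhedral structure $\CS$ on the tropical curve $\Trop_\varphi(\A^1)$ fine enough that on each edge $\sigma$ the index $i(\sigma)$ realizing the minimum $z_{i} = \min_j z_j$ is constant along $\relint(\sigma)$ (this is possible since the locus where $z_i \le z_j$ is a closed rational polyhedral condition, so after refining we may assume each edge lies entirely in one such region). On such an edge, by definition $P(z) = \max(z_{i(\sigma)}, \log\vert b - a_{i(\sigma)}\vert)$, which is the maximum of an affine function (the coordinate $z_{i(\sigma)}$, which is affine on $\sigma$ by $\R$-rationality) and a constant; hence $P|_\sigma$ is piecewise affine, and affine away from the at most one point of $\sigma$ where $z_{i(\sigma)} = \log\vert b - a_{i(\sigma)}\vert$.

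Next I would pin down where non-affinity can actually occur. A breakpoint inside an edge $\sigma$ happens only at the point $z \in \sigma$ with $z_{i(\sigma)} = \log\vert b - a_{i(\sigma)}\vert$; I claim any such $z$ must equal $z_0$. Indeed, at such a point $z_{i(\sigma)} = \min_j z_j$, so $z_j \ge \log\vert b - a_{i(\sigma)}\vert$ for all $j$, and I would use Lemma \ref{lem seminorm} (applied to a seminorm over $R = K[x]$ mapping to $z$, exactly as in the proof of Proposition \ref{theorem iterated maximum}) to upgrade this to $z_j = \log\vert b - a_j\vert$ for every $j$: since $z_{i(\sigma)} \le z_j$ and we may assume $z_j \ne \log\vert b - a_j\vert$ would force, via the chain of inequalities in that lemma, $\max(z_{i(\sigma)}, \log\vert b-a_{i(\sigma)}\vert) = \max(z_j, \log\vert b - a_j\vert)$, and tracing equality cases gives $z_j = \log\vert b - a_j\vert$. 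Thus $z = z_0$. Finally, continuity of $P$ follows since on each edge it is continuous and at the finitely many vertices the local expressions $\max(z_j, \log\vert b - a_j\vert)$ for different adjacent edges agree by part (ii) of Proposition \ref{theorem iterated maximum} whenever the maximum is attained uniquely, and when it is not, both edges give value $z_0$-consistent data; gluing the boundary-at-infinity behavior is handled because $P$ extends continuously to $\T^{r+1}$ as a coordinate-wise maximum.

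The main obstacle I anticipate is the bookkeeping at vertices of $\Trop_\varphi(\A^1)$ where several edges meet and the minimizing index $i(\sigma)$ jumps between edges: one must check that the affine pieces from neighboring edges glue continuously and that no spurious non-affine point is introduced at such a vertex other than possibly $z_0$. This is where Proposition \ref{theorem iterated maximum}(i), the well-definedness independent of the choice of minimizer, does the real work — it guarantees that the two formulas $\max(z_i, \log\vert b - a_i\vert)$ and $\max(z_j, \log\vert b - a_j\vert)$ coincide at a shared vertex where both $i,j$ are minimizers, so $P$ is a genuine well-defined continuous function, and non-affinity at a vertex $z \neq z_0$ would contradict that the relevant coordinate $z_{i(\sigma)}$ is affine across that vertex on the edges where it is minimal.
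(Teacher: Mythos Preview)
Your overall plan --- refine the polyhedral structure so that a minimizing index $i(\sigma)$ is constant on each edge, then analyze $P|_\sigma = \max(z_{i(\sigma)}, \log|b-a_{i(\sigma)}|)$ --- matches the paper's setup for continuity. But your argument for locating the non-affine point has a genuine gap, and your handling of vertices is more involved than necessary.

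The gap is in the step ``tracing equality cases gives $z_j = \log|b-a_j|$.'' Suppose $z^*$ lies on an edge $\sigma$ with $z^*_{i(\sigma)} = \log|b - a_{i(\sigma)}|$, and suppose for contradiction $z^*_j \neq \log|b - a_j|$ for some $j$. Lemma~\ref{lem seminorm} gives $\max(z^*_{i(\sigma)}, \log|b-a_{i(\sigma)}|) = \max(z^*_j, \log|b-a_j|)$, so both sides equal $z^*_{i(\sigma)}$. Since $z^*_j \ge z^*_{i(\sigma)}$, this forces $z^*_j = z^*_{i(\sigma)}$ and $\log|b-a_j| \le z^*_{i(\sigma)}$ --- but it does \emph{not} force $z^*_j = \log|b-a_j|$, and there is no contradiction. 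So you have not shown $z^* = z_0$.

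The paper avoids this by arguing the other way round, and the argument is a one-liner: if $z \neq z_0$, then there exists $j$ with $z_j \neq \log|b-a_j|$, and by continuity this inequality persists on a neighborhood of $z$. Proposition~\ref{theorem iterated maximum}(ii) then says $P$ equals $\max(z'_j, \log|b-a_j|)$ on that whole neighborhood, and since the maximum is uniquely attained there, $P$ is either the $j$-th coordinate or a constant --- affine in either case. This handles interior points of edges and vertices simultaneously, so the separate vertex analysis you flag as the ``main obstacle'' simply does not arise. Your edge-by-edge bookkeeping with a fixed minimizer $i(\sigma)$ is the right tool for continuity, but for affinity you should switch to the index $j$ at which the maximum is \emph{unique}, not the index at which the coordinate is minimal.
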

\begin{proof} 
Let $\CS$ be a polyhedral structure on $\Trop_\varphi(\A^1)$ such that for each $\sigma \in \CS$ there 
exists $i_\sigma \in [r]$ such that for all $z \in \sigma$ we have $z_{i_\sigma} \leq z_j$ for all $j \in [r]$ . 
Then $P|_{\sigma}$ is continuous by definition, thus $P$ is continuous. 

If $z \neq (\log \vert a_i - b\vert)_{i \in [r]}$, then there exists $j\in [r]$ such that $\max (z_j, \log \vert b - a_j \vert )$ is attained uniquely. 
Since this maximum is then attained uniquely for all $z'$ in a neighborhood of $z$, 
Proposition \ref{Lemma iteriertes Maximum} ii) shows that $P$ is either constant or the projection to the $j$-th coordinate 
on this neighborhood, so in particular affine. 
\end{proof}

\begin{satz} \label{theorem refinement modification}
Let $\varphi\colon \A^1 \to \A^r$ be a linear embedding  
and $\varphi'\colon \A^1 \to \A^{r+1}$ a linear refinement. 
We consider the commutative diagram 
\begin{align*}
\begin{xy}
\xymatrix{
	&    \A^{r+1}  \ar[d]^{\pi}  \\
	\A^1 \ar[ru]^{\varphi'} \ar[r]^{\varphi} & \A^{r}   
}
\end{xy} 
\end{align*} and the map 
$\Trop(\pi)\colon\Trop_{\varphi'}(\A^1)\to \Trop_{\varphi}(\A^1)$
induced by the projection $\TT^{r+1}\to \TT^r$. Then $\Trop(\pi)$ is a tropical modification. 
\end{satz}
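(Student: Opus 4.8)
The plan is to show that the map $\Trop(\pi)$ agrees with the tropical modification along the function $P = P_{a_1,\dots,a_r,b}$ from Definition \ref{definition P}. More precisely, I would first check that $\varphi'$ factors (up to the torus-equivariant projection $\pi$) through the refinement given by $x-a_1,\dots,x-a_r,x-b$, so by possibly reordering coordinates and using that the transition map is independent of $\psi$, it suffices to treat the case where $\varphi'$ is given by exactly these $r+1$ linear polynomials. The key geometric claim is then: under the identification $\T^{r+1} = \T^r \times \T$ with the last coordinate being $\log|x-b|$, the image $\Trop_{\varphi'}(\A^1)$ coincides, as a weighted polyhedral curve, with the tropical modification $Y$ of $X := \Trop_\varphi(\A^1)$ along $P$ as in Construction \ref{Tropicalmodification}, and $\Trop(\pi)$ is the projection $\delta \colon Y \to X$.

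The main steps I would carry out are as follows. First, I would show that for a seminorm $\rho \in \A^{1,\an}$ mapping to $z \in \Trop_\varphi(\A^1)$, the value $\log \rho(x-b)$ is \emph{at least} $P(z)$, with equality whenever $z \neq (\log|a_i-b|)_{i\in[r]}$: indeed, picking $i$ with $z_i$ minimal, $\rho(x-b) \le \max(\rho(x-a_i),\rho(a_i-b)) = \max(z_i\text{-exponential}, |a_i - b|)$, and by Proposition \ref{theorem iterated maximum} this bound is the same for every choice of $i$; conversely when the maximum defining $P(z)$ is attained uniquely the ultrametric inequality forces equality. This identifies the part of $\Trop_{\varphi'}(\A^1)$ lying over $X \setminus \{(\log|a_i-b|)_{i\in[r]}\}$ with the graph $\Gamma_X(P)$ over that locus. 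Second, over the single exceptional point $w := (\log|a_i-b|)_{i\in[r]}$ the fibre of $\Trop(\pi)$ is the segment $[(w,P(w)),(w,-\infty)]$: points $\rho$ with $\rho(x-a_i) = |a_i-b|$ for all $i$ can have $\rho(x-b)$ take any value in $[0,|a_i-b|]$ (realized e.g. by the seminorms on the disc of those radii centred appropriately), and $\log\rho(x-b) \le P(w)$ always. This is exactly the extra edge $\sigma_w$ added in Construction \ref{Tropicalmodification}. Third, I would invoke Lemma \ref{Prop weights} to see all weights on $\Trop_{\varphi'}(\A^1)$ equal $1$, matching the weights $m_{\Gamma_\sigma(P)} = m_\sigma = 1$ prescribed by the modification construction, and check the weight on $\sigma_w$ agrees with the unique balancing weight (which is forced, so both are determined the same way). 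Combined with Lemma \ref{Lemma P properties}, which tells us $P$ is continuous, piecewise affine with the single non-linearity locus $\{w\}$, and integer-sloped (the slopes are $0$ or $\pm1$ since $P$ is locally a coordinate projection), this shows $\Trop_{\varphi'}(\A^1) = Y$ and $\Trop(\pi) = \delta$.

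The main obstacle I expect is the careful bookkeeping at the exceptional point $w$: one must verify both that every value of $\log\rho(x-b)$ in the half-line $(-\infty, P(w)]$ is actually attained by some seminorm in the fibre (surjectivity onto the added edge), and that nothing extra appears — i.e. that the fibre is precisely that closed ray and that the local picture of $\Trop_{\varphi'}(\A^1)$ near $w$ is the balanced completion and not something larger. A subtle point is that $w$ need not be a vertex of the chosen polyhedral structure on $X$, and $P$ is non-affine exactly there, so I would choose a polyhedral structure on $X$ adapted to $P$ (as in the proof of Lemma \ref{Lemma P properties}, refined so that $w$ is a vertex) before comparing with Construction \ref{Tropicalmodification}. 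Once the identification of the underlying weighted polyhedral curves and of the projection map is in place, the conclusion that $\Trop(\pi)$ is a tropical modification is immediate from the definition. I would also remark that the case $b = a_j$ for some $j$, temporarily excluded before Definition \ref{definition P}, is handled separately and trivially: then $\varphi'$ differs from $\varphi$ by repeating a coordinate, $\Trop(\pi)$ is an isomorphism onto $X$, and the identity map is a (degenerate) tropical modification along the zero function.
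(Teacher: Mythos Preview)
Your proposal is correct and follows essentially the same route as the paper's proof: reduce to $b\neq a_j$, identify the fibre of $\Trop(\pi)$ over a generic point with the graph of $P$ via the ultrametric inequality, show the fibre over the exceptional point $z_s=(\log|a_i-b|)_{i\in[r]}$ is exactly the ray $[(z_s,-\infty),(z_s,P(z_s))]$ using the disc seminorms $\eta(b,t)$, and then use balancing plus Lemma~\ref{Prop weights} to match the weights. One slip to fix: in your first step you write that $\log\rho(x-b)$ is \emph{at least} $P(z)$, but the inequality you (correctly) derive is $\rho(x-b)\le\max(\rho(x-a_i),|a_i-b|)$, i.e.\ $\log\rho(x-b)\le P(z)$; the rest of your argument already uses the correct direction.
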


\begin{proof} 
Let $\varphi$ be given by $(x- a_1), \dots ,(x -  a_r)$ and $\varphi'$ additionally by $(x - b)$.
If there exists $j \in [r]$ such that $b = a_j$, then $z \mapsto (z, z_j)$ is an extended linear map which is an inverse of $\Trop(\pi)$. 
In particular, $\Trop(\pi)$ is an isomorphism and we are done. 
Thus, we may assume that $b \neq a_j$ for all $j \in [r]$.
 
In the following, we write $X$ (resp.~$X'$) for $\Trop_{\varphi}(\A^1)$ (resp.~$\Trop_{\varphi'}(\A^1))$, 
use $P$ as defined in Definition \ref{definition P} and 
  use the notation $z_{s}$ for the  point $(\log\vert b-a_i\vert)_{i\in [r]} \in \R^r $. 
We want to show that $X'$ is the completion of the graph 
$\Gamma_X(P) \subset X \times \R \subset \T^{r+1}$ to a tropical curve as explained in Construction \ref{Tropicalmodification}. 

We show that if $z \in X \setminus \{ z_s\}$, the unique preimage of $z$ under $\Trop(\pi)\colon X' \rightarrow X$
is the point $(z, P(z))$ and that the preimage of $z_s$ is the line $[(z_s, -\infty), (z_s, P(z_s))]$. 
We then conclude that $P$ is indeed not linear in $z_s$ and that the line $[(z_s, -\infty), (z_s, P(z_s))]$ is precisely needed to rebalance the graph $\Gamma_X(P)$, 
which proves the claim.

At first, let $z \in X\backslash\{z_s\}$ and consider $\rho \in \A^{1, \an}$ such that $\varphi_{\trop}(\rho) = z$.
Then the ultrametric triangle inequality implies $P(z)=\log (\rho(x-b))$, which precisely means $\varphi'_{\trop}(\rho) = (z, P(z))$,
and that this is the unique preimage of $z$ under $\Trop(\pi)$.

Next, we consider the preimage of the remaining point $z_s\in X$.
Let $\eta(b, t) \in \A^{1, \an}$ be given by the multiplicative seminorm $ f \mapsto \sup_{c \in D(b, t)} |f(c)|$ with $t\geq0$.
Using the ultrametric triangle inequality, we get $\varphi_{\trop} (\eta(b, t) ) = z_s$ and 
$\varphi'_{\trop}(\eta(b,t)) = (z_s, \log(t))$, for $- \infty \leq \log(t) \leq P(z_s)$.
This shows 
\begin{align}\label{Equ. Urbild z_s}
[(z_s, -\infty), (z_s, P(z_s))] \subset \Trop(\pi)^{-1}(\{z_s\}).
\end{align}
For the other inclusion, observe that by the ultrametric triangle inequality, 
we have $\rho(x - b) \leq \max (\rho (x- a_i), |a_i - b|)$ for all $i\in [r]$, which shows 
$\rho(x - b) \leq P(z_s)$ for all $\rho\in \varphi_{\trop}^{-1}(z_s)$. 
Thus, the $(r+1)$-th coordinate of any point in the fibre of $z_s$ is bounded above by $P(z_s)$. 
This shows that we have equality in (\ref{Equ. Urbild z_s}).

All together, we obtain 
\begin{align*}
X'=\Gamma_X(P) \cup [(z_s, -\infty), (z_s, P(z_s))].
\end{align*}
By Lemma \ref{Lemma P properties}, $P$ is affine everywhere except possibly at $z_s$. 
Furthermore, we know by Theorem \ref{Theorem tropical curve} and Lemma \ref{Prop weights} 
that $X'$ is a tropical curve with all weights equal to $1$. 
Thus, $P$ can not be affine in $z_s$ because otherwise $X'$ would not satisfy the balancing condition. 
Consequently, the line $[(z_s, -\infty), (z_s, P(z_s))]$  is precisely needed to rebalance the graph $\Gamma_X(P)$ 
as explained in Construction \ref{Tropicalmodification}.
\end{proof}

Theorem \ref{theorem tropical manifold} is a special instance of the fact that tropicalizations 
of linear subspaces are tropical manifolds, which was to the authors' knowledge first observed by Speyer \cite{Speyer}.
We give a self-contained proof in our case using Theorem \ref{theorem refinement modification}.

\begin{satz}\label{theorem tropical manifold}
Let $\varphi\colon \A^1 \to \A^r$ be a linear embedding. 
Then the tropical curve $\Trop_\varphi(\A^1)$  is smooth.
\end{satz}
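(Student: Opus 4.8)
The plan is to induct on $r$ via linear refinements, using Theorem \ref{theorem refinement modification} together with the fact that smoothness is preserved under (inverse images of) tropical modifications in the sense relevant here.

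\emph{Base case.} For $r = 1$ a linear embedding $\varphi\colon \A^1 \to \A^1$ given by $x - a_1$ is an isomorphism, and $\Trop_\varphi(\A^1) = \T$, which is trivially a smooth tropical curve: its only vertex is the one at infinity, where $\val = 1 = \dim + 1$, and all (i.e.~the single) weights are $1$.

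\emph{Inductive step.} Let $\varphi\colon \A^1 \to \A^r$ be given by $x - a_1, \dots, x - a_r$. After reordering we may assume the last coordinate is genuinely ``new'', i.e.~$\varphi$ factors as $\pi \circ \varphi'$ where $\varphi'\colon \A^1 \to \A^r$ is the linear refinement given by $x - a_1,\dots,x-a_{r-1}, x - a_r$ over the linear embedding $\bar\varphi \colon \A^1 \to \A^{r-1}$ given by $x-a_1,\dots,x-a_{r-1}$; by the inductive hypothesis $X := \Trop_{\bar\varphi}(\A^1)$ is smooth. By Theorem \ref{theorem refinement modification}, the transition map $\Trop(\pi)\colon \Trop_\varphi(\A^1) \to X$ is a tropical modification. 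Now I distinguish whether $a_r$ equals some $a_j$ with $j < r$: if so, $\Trop(\pi)$ is an isomorphism (as noted in the proof of Theorem \ref{theorem refinement modification}) and we are done. If $a_r \neq a_j$ for all $j < r$, then by the analysis in that proof $\Trop_\varphi(\A^1) = \Gamma_X(P) \cup \sigma_{z_s}$ where $\sigma_{z_s} = [(z_s,-\infty),(z_s,P(z_s))]$, $P = P_{a_1,\dots,a_{r-1},a_r}$, and $z_s = (\log|a_r - a_i|)_{i}$. Lemma \ref{Prop weights} guarantees all weights are $1$, so it remains to check the local condition $\val(z) = \dim(z) + 1$ at every vertex of $\Trop_\varphi(\A^1)$.

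\emph{Checking the smoothness condition at vertices.} There are three kinds of vertices to treat. First, vertices of $\Gamma_X(P)$ lying over a vertex $z \neq z_s$ of $X$: near such a point $\Trop(\pi)$ restricts to a bijection onto a neighborhood in $X$ that is a graph of an affine function (by Lemma \ref{Lemma P properties}), hence a linear isomorphism of neighborhoods, so $\val$ and $\dim$ are unchanged and the condition is inherited from smoothness of $X$. Second, the finite vertex $(z_s, P(z_s))$: here three edges meet — the two images under $z \mapsto (z,P(z))$ of the (at most two, since $X$ is a curve smooth at $z_s$ so $\val_X(z_s) \le \dim_X(z_s)+1$ with $\dim_X(z_s) \le 1$, i.e.\ $z_s$ is interior to an edge or a trivalent-type vertex of $X$) local edges of $X$ at $z_s$, plus the new downward edge $\sigma_{z_s}$ — and one checks directly that their primitive direction vectors, which are of the form $(v, \langle \text{slope}\rangle)$ for the graph edges and $(0,\dots,0,-1)$ for $\sigma_{z_s}$, span a space of dimension $\dim_X(z_s) + 1$, matching $\val = \dim + 1$; the balancing computation is exactly the one performed in Construction \ref{Tropicalmodification} and Theorem \ref{theorem refinement modification}. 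Third, the vertex at infinity $(z_s,-\infty)$ of $\sigma_{z_s}$, where $\val = 1 = \dim + 1$ by definition since $\dim = 0$ at infinity; likewise the vertices at infinity of $\Gamma_X(P)$ coming from those of $X$ are handled as in the first case. I expect the main (though still routine) obstacle to be the bookkeeping at the finite vertex $z_s$: one must carefully match the valence and dimension across the modification, i.e.\ verify that adding the single vertical ray raises both $\val$ and $\dim$ by exactly one, which reduces to the primitive generators $\nu_{z_s,\sigma_i}$ of the edges of $X$ at $z_s$ together with $e_{r}$ being linearly independent — and this follows because the $\nu_{z_s,\sigma_i}$ already are (smoothness of $X$) and none of them has a nonzero last coordinate while $e_r$ does.
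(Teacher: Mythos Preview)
Your overall strategy---induction on $r$ via Theorem \ref{theorem refinement modification}, with Lemma \ref{Prop weights} handling the weights---is exactly the paper's approach, and the base case and the case $a_r = a_j$ are fine.

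There is, however, a genuine error in your analysis of the finite vertex $(z_s, P(z_s))$. You assert that ``$\dim_X(z_s) \le 1$'' because $X$ is a curve, and conclude that at most two edges of $X$ meet $z_s$. This confuses the dimension of the tropical variety with the quantity $\dim(z)$ from Definition \ref{Def. val, dim, glatt}, which is the dimension of the linear span of the edge directions in the ambient $\R^{r-1}$ and can be as large as $r-1$. In fact $z_s$ may very well coincide with a pre-existing higher-valence vertex of $X$: already for $r=3$, if $|a_1-a_2| = |a_1-a_3| = |a_2-a_3|$, then $z_s$ is the trivalent vertex of $\Trop_{\bar\varphi}(\A^1) \subset \T^2$, and after the modification four edges meet at $(z_s,P(z_s))$, not three. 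So the sentence ``here three edges meet'' is simply false in general.

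The correct bookkeeping, which you essentially sketch in your final paragraph, is: if $\sigma_1,\dots,\sigma_k$ are the edges of $X$ at $z_s$ (for any $k$), then the edges at $(z_s,P(z_s))$ are the graphs $\sigma_i^P$ together with the vertical ray, so $\val$ increases by one; and since $\nu_{(z_s,P(z_s)),\sigma_i^P} = (\nu_{z_s,\sigma_i}, c_i)$ while the vertical direction is $(0,\dots,0,1)$, the span of all of them is $\langle (\nu_{z_s,\sigma_i},0)\rangle_i \oplus \R\cdot e_r$, so $\dim$ also increases by one. Note that you should not say the $\nu_{z_s,\sigma_i}$ are linearly independent---smoothness gives $\val(z_s) = \dim(z_s)+1$, meaning they satisfy exactly one linear relation (the balancing condition). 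What you need, and what is immediate, is that their span is $\dim(z_s)$-dimensional and lies in the hyperplane $\{x_r = 0\}$, hence adding $e_r$ raises the dimension by one.
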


\begin{proof}
We do induction on $r$, with $r = 1$ being obvious since $\T$ is smooth.
For the induction step let $\varphi' \colon \A^1 \to \A^{r+1}$ be given by $(x- a_1), \dots,(x-a_r),(x-b)$ and 
we need to show that $\Trop_{\varphi'}(\A^1)$ is smooth. 
Note that we already know that all weights of $\Trop_{\varphi'}(\A^1)$ are equal to $1$ by Lemma \ref{Prop weights}. 
For the other required properties in Definition \ref{Def. val, dim, glatt}, 
we consider $\varphi \colon \A^1 \to \A^r$ which is given by $(x- a_1), \dots,(x-a_r)$. 

If $b = a_i$ for some $i$, then as in the proof of Theorem \ref{theorem refinement modification}, 
$\Trop(\pi)$ is an isomorphism and we are done since $\Trop_\varphi(\A^1)$ is smooth by induction hypothesis. 
Thus we will in the following assume $b \neq a_i$ for all $i$. 
We have seen in Theorem \ref{theorem refinement modification} that 
$\Trop(\pi)\colon\Trop_{\varphi'}(\A^1)\to \Trop_\varphi(\A^1)$ is a tropical modification
and the vertices of $\Trop_{\varphi'}(\A^1)$ are precisely the preimages of the ones of $\Trop_\varphi(\A^1)$, plus $(z_s, -\infty)$ and $(z_s, P(z_s))$, 
where $z_s$ denotes the point  $(\log\vert b-a_i\vert)_{i\in [r]}$ and $P$ the function from Definition \ref{definition P}.

By induction hypothesis, we know that $\Trop_{\varphi}(\A^1)$ is smooth. 
For a vertex $z$ of $\Trop_{\varphi'}(\A^1)$ which is neither $(z_s, -\infty)$ nor $(z_s, P(z_s))$,
we have invariance of valence $\val(z) = \val(\Trop(\pi)(z))$ and dimension $\dim(z) = \dim(\Trop(\pi)(z))$.
Thus $z$ is a smooth point since $\Trop(\pi)(z)$ is.  

We examine now the situation at $z = (z_s, P(z_s))$.
Let $\sigma_1,\dots,\sigma_k$ be the edges adjacent to $z_s$.
Denote by $\sigma^P_i$ the image of $\sigma$ under the map $y \mapsto (y, P(y))$. 
The edges adjacent to $z$ are then given by $\sigma^P_1,\dots,\sigma^P_k, \{z_s\} \times [P(z_s), -\infty]$
and thus $\val(z) = \val(z_s) +1$. 
We have $\dim(z_s) = \dim \langle \nu_{z_s, \sigma_1},\dots, \nu_{z_s, \sigma_k} \rangle$ 
and $\nu_{z, \sigma^P_i} = (\nu_{z_s, \sigma_i}, c_i)$ for some $c_i \in \R$. 
Then 
\begin{align*}
\dim (z) &= \dim \langle \nu_{z, \sigma_1^P}, \dots, \nu_{z, \sigma_k^P}, (0,\dots,0,1) \rangle \\
&= \dim \langle (\nu_{z, \sigma_1}, 0), \dots, (\nu_{z, \sigma_k},0), (0,\dots,0,1) \rangle = \dim(z_s) +1.
\end{align*} 
and consequently $\val(z) = \dim(z) +1$.  

The point $(z_s, -\infty)$ lies at infinity, thus $\dim(z) = 0$, and has only the adjacent edge $[(z_s, -\infty), (z_s, P(z_s))]$, thus $\val(z) = 1$. 
Altogether, we have $\val(z) = \dim(z) +1$ for all vertices of $\Trop_{\varphi'}(\A^1)$, 
which precisely means that $\Trop_{\varphi'}(\A^1)$ is smooth, completing the induction. 
\end{proof}

\begin{kor}\label{Kor PD}
Let $\varphi\colon \A^1 \to \A^r$ be a linear embedding. Then every open subset $\Omega$ of $\Trop_\varphi(\A^1)$ has $\PD$.
\end{kor}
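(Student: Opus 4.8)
The statement to prove is Corollary \ref{Kor PD}: for a linear embedding $\varphi\colon \A^1 \to \A^r$, every open subset $\Omega$ of $\Trop_\varphi(\A^1)$ has $\PD$. The plan is to combine the two main results established just above, namely Theorem \ref{theorem tropical manifold}, which says that $\Trop_\varphi(\A^1)$ is a smooth tropical curve, with Theorem \ref{PD open subsets of smooth tropical curves}, which says that open subsets of smooth tropical curves have $\PD$. So the proof is essentially a one-line deduction: by Theorem \ref{theorem tropical manifold} the tropical curve $Y := \Trop_\varphi(\A^1)$ is smooth, and then any open subset $\Omega \subseteq Y$ has $\PD$ by Theorem \ref{PD open subsets of smooth tropical curves}.

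First I would make sure no hidden hypothesis is being swept under the rug: Theorem \ref{PD open subsets of smooth tropical curves} is stated for arbitrary open subsets of arbitrary smooth tropical curves, with no further assumption, so there is nothing to check beyond smoothness of $\Trop_\varphi(\A^1)$. And smoothness has been proved unconditionally for every linear embedding, including all the degenerate cases where some $a_i$ coincide (these are handled in the proof of Theorem \ref{theorem tropical manifold} via the observation that $\Trop(\pi)$ becomes an isomorphism), so the corollary follows for all linear embeddings without exception.

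Concretely, the argument I would write is: Let $Y = \Trop_\varphi(\A^1)$. By Theorem \ref{theorem tropical manifold}, $Y$ is a smooth tropical curve. Hence, by Theorem \ref{PD open subsets of smooth tropical curves}, every open subset $\Omega$ of $Y$ has $\PD$, i.e.\ the Poincar\'e duality map $\PD \colon \HH^{p,q}(\Omega) \to \HH^{1-p,1-q}_c(\Omega)^*$ is an isomorphism for all $(p,q)$.

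There is essentially no obstacle here — this corollary is purely a packaging step, recording the consequence of the two theorems for later use in Section \ref{section modifications} (it is the input needed to transport $\PD$ from tropicalizations back to open subsets of $\A^{1,\an}$, and thence via Proposition \ref{Prop Thm Mumford curve} to open subsets of Berkovich curves). The only conceivable subtlety — whether the weights and polyhedral structure on $\Trop_\varphi(\A^1)$ that enter the definition of ``smooth'' match those used in the $\PD$ statement — is already resolved, since Lemma \ref{Prop weights} fixes all weights to be $1$ and the notion of $\PD$ in Definition \ref{Def. PD} is intrinsic to the tropical curve. So I would keep the proof to the two sentences above.
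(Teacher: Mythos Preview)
Your proposal is correct and follows exactly the same approach as the paper: the paper's proof is the single sentence ``The assertion follows directly by Theorem \ref{theorem tropical manifold} and Theorem \ref{PD open subsets of smooth tropical curves}.'' Your additional remarks about hidden hypotheses and weights are sound sanity checks but are not needed for the argument.
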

\begin{proof}
The assertion follows directly by Theorem \ref{theorem tropical manifold} and Theorem \ref{PD open subsets of smooth tropical curves}.
\end{proof}

\subsection{Calculating cohomology with compact support using only one tropical chart}
In this subsection, we introduce linear $\A$-tropical charts and show that for any standard open subset $V$ of $\A^{1,\an}$ 
it suffices to consider one linear $\A$-tropical chart $(V, \varphi)$ to determine the cohomology with compact support of $V$ 
(cf.~Theorem \ref{thm Cohom. lineare trop.}).

\begin{defn}
An $\A$-tropical chart $(V, \varphi)$ is called a \emph{linear $\A$-tropical chart} if the map $\varphi$ is a linear embedding.

An $\A$-tropical subchart $(V', \varphi')$ is called a \emph{linear $\A$-tropical subchart} if $\varphi'$ is a linear refinement of $\varphi$. 
\end{defn}

The next proposition shows that, when defining forms  on $\A^{1, \an}$, we may restrict our attention to linear $\A$-tropical charts.

\begin{prop}\label{Proposition 3 ins 1}
Let $V$ be an open subset of $\A^{1,\an}$ and $(V, \varphi)$ an $\A$-tropical chart.
Then there exists a linear $\A$-tropical chart $(V, \varphi')$ such that  for all $\alpha \in \AS^{p,q}(\varphi_{\trop}(V))$ there exists 
$\alpha' \in \AS^{p,q}(\varphi'_{\trop}(V))$ such that 
\begin{align*}
(V, \varphi, \alpha) = (V, \varphi', \alpha') \in \AS^{p,q}(V).
\end{align*}
\end{prop}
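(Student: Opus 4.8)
The plan is to show that any $\A$-tropical chart on an open subset $V$ of $\A^{1,\an}$ can be ``linearized'' without changing the resulting forms, by passing to a suitable common linear refinement. First I would recall that $\varphi \colon U \to \A^r$ is given by rational functions $f_1,\dots,f_r \in \OS_{\A^1}(U)$; since $U$ is an affine open subset of $\A^1$, each $f_i$ lies in a localization $K[x][1/g]$ for some polynomial $g$, and hence, after clearing denominators and using that $K$ is algebraically closed, each $f_i$ can be written as a $K$-rational function whose numerator and denominator split into linear factors $x - a$. The key observation is then that the coordinates $\log\rho(f_i)$ are, by the ultrametric triangle inequality and the factorization, piecewise-affine functions of the finitely many coordinates $\log\rho(x - a)$ appearing among these linear factors. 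Concretely, list all the points $a_1,\dots,a_N \in K$ occurring as roots of numerators or denominators of the $f_i$ (together with the roots of $g$, so that $U$ is cut out appropriately), and let $\varphi' \colon \A^1 \to \A^N$ be the linear embedding given by $(x - a_1),\dots,(x - a_N)$ — or rather its restriction to $U$, which is still a closed embedding since it factors the inclusion of $U$ into affine space.

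Second I would verify that $\varphi'$ is a refinement of $\varphi$ in the sense of Definition \ref{Def. transition map}: one must exhibit a torus-equivariant map $\psi \colon \A^N \to \A^r$ with $\varphi|_U = \psi \circ \varphi'$. The map $\psi$ is the monomial map dual to the expression of each $f_i = \prod (x-a_j)^{e_{ij}}$ (a Laurent monomial in the $x - a_j$ on the torus, extended in the obvious way to $\A^N$), which is torus-equivariant by construction. Having this, the transition map $\Trop(\psi) \colon \Trop_{\varphi'}(U) \to \Trop_\varphi(U)$ is an extended linear map, and $\varphi_{\trop} = \Trop(\psi) \circ \varphi'_{\trop}$; hence $V = \varphi_{\trop}^{-1}(\Omega) = \varphi'^{-1}_{\trop}(\Trop(\psi)^{-1}(\Omega))$ shows that $(V, \varphi')$ is again an $\A$-tropical chart, linear by construction. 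One subtlety: $\varphi'$ as written is an embedding of all of $\A^1$ into $\A^N$, whereas $\varphi$ is an embedding of the possibly-smaller $U$; I would handle this by enlarging the set of $a_j$ to also include the poles defining $U$ so that $V \subset U'^{\an}$ with $U' \subseteq U$, invoking the remark after Definition \ref{defn:GATcharts} that a refinement $\varphi'$ with $V \subset U'^{\an}$ yields an $\A$-tropical subchart.

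Third, to produce $\alpha'$ from a given $\alpha \in \AS^{p,q}(\varphi_{\trop}(V))$, I would simply set $\alpha' := \Trop(\psi)^*\alpha$, the pullback of superforms along the extended linear (indeed extended affine) transition map, which is well-defined by the remark following the definition of extended affine maps and commutes with $d''$ and $\wedge$. By the very definition of when two families define the same form (Definition \ref{Def. forms}, condition (iii), with the trivial one-element common subchart $(V,\varphi')$), the triples $(V,\varphi,\alpha)$ and $(V,\varphi',\alpha')$ agree in $\AS^{p,q}(V)$, since $\alpha'$ is by definition the pullback of $\alpha$ via the transition map. The main obstacle I anticipate is purely bookkeeping: making sure the chosen linear embedding genuinely restricts to a closed embedding on the relevant affine open (one must not forget the poles, which on $\A^1$ means including factors $x - a$ for the finitely many points removed from $\A^1$ to obtain $U$, and noting that $\A^1 \setminus \{a_1, \dots, a_m\}$ does embed into a torus via these linear coordinates) and checking torus-equivariance of $\psi$ on the boundary divisors rather than only on the torus. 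None of these steps is deep; the content is entirely in the factorization of rational functions on $\A^1$ into linear pieces, which is where algebraic closedness of $K$ is used.
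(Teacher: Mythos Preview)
Your overall strategy---factor the rational coordinate functions of $\varphi$ into linear pieces and use those as the coordinates of $\varphi'$---is exactly what the paper does. The gap is in your second step: you assert that $\varphi'$ is a \emph{refinement} of $\varphi$, and this is where the argument fails. Recall (Definition~\ref{Def. transition map}) that a refinement requires (a) $U' \subset U$ and (b) a torus-equivariant morphism $\psi \colon \A^{N} \to \A^{r}$ with $\varphi|_{U'} = \psi \circ \varphi'$. Neither condition is met. Your $\varphi'$ is a linear embedding, so its domain is $U' = \A^{1}$, which is \emph{larger} than $U$ in general; restricting $\varphi'$ to $U$ does not help, since $\varphi'(U)$ is then $\varphi'(\A^{1})$ with finitely many points removed, hence not closed in $\A^{N}$. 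More seriously, the map $\psi$ you describe does not exist as a morphism $\A^{N} \to \A^{r}$: writing $f_i/g_i = (c_i/d_i)\prod_j (x-a_j)^{e_{ij}}$, the Laurent monomial has negative exponents whenever $g_i$ is nonconstant, so it cannot be ``extended in the obvious way'' to $\A^{N}$; and even when all exponents are nonnegative, the scalar $c_i/d_i$ makes $\psi$ only torus-equivariant \emph{composed with a multiplicative translation}, so that $\Trop(\psi)$ is extended affine rather than extended linear---again violating the definition of refinement.

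The paper circumvents exactly this by \emph{not} claiming that $\varphi'$ refines $\varphi$. Instead it takes the product $\varphi \times \varphi' \colon U \to \A^{r} \times \A^{N}$ as a common refinement of both (the projections $\pi_1,\pi_2$ are genuinely torus-equivariant). The substitute for your $\psi$ is a map $\eta$ defined only on the partial compactification $\prod_i (\A^{s_i}\times \G_m^{t_i})$ (where the denominator coordinates are invertible), which contains $\varphi'(U)$; this $\eta$ induces an extended \emph{affine} map $\Trop(\eta)$, and one sets $\alpha' := \Trop(\eta)^*\alpha$. The commutativity $\pi_1 = \eta \circ \pi_2$ on $(\varphi\times\varphi')(U)$ then yields $\Trop(\pi_1)^*\alpha = \Trop(\pi_2)^*\alpha'$, which is precisely condition~(iii) of Definition~\ref{Def. forms} on the common subchart $(V,\varphi\times\varphi')$. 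Your proof becomes correct once you replace the direct refinement claim by this triangle through $\varphi\times\varphi'$.
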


\begin{proof}
Let $U\subset \A^{1}$ be the domain of $\varphi$ and $f_i, g_i$ such that $\varphi\colon U \to \A^r$ is  given by $f_i / g_i$. 
Write $f_i = c_i  \prod_{j = 1} ^{s_i}(x - a_{ij})$ and $g_i = d_i  \prod_{k = 1} ^{t_i}(x - b_{ik})$. 
Let 
$\varphi'\colon \A^1 \to \prod_{i = 1}^{r} \left(\A^{s_i} \times \A^{t_i}\right)$ be
the closed embedding given by $(x- a_{ij})$ and $(x - b_{ik})$ for $i = 1,\dots,r$, $j = 1,\dots,s_i$ and $k = 1,\dots,t_i$.
Note that $\varphi'(U)$ is contained in $\prod_{i = 1}^{r} \left(\A^{s_i} \times \G_m^{t_i}\right)$
since the $g_i$ do not vanish on $U$. 
The maps
\begin{align*}
\eta_i \colon \A^{s_i} \times \G_m^{t_i} \rightarrow \A^1
\end{align*}
which are given by $x \mapsto \frac {c_i \prod_{j=1}^{s_i} T_{ij}}{d_i \prod_{k=1}^{t_i} S_{ik}}$ induce a map
\begin{align*}
\eta \colon \prod_{i = 1}^{r} \left(\A^{s_i} \times \G_m^{t_i}\right) \rightarrow \A^r. 
\end{align*}
Restricting our attention to the respective images of $U$, one can easily check on coordinate rings that the diagram
\begin{align*}
\begin{xy}
\xymatrix{
& (\varphi \times \varphi')(U) \ar[rd]^{\pi_2} \ar[ld]_{\pi_1} \\
\varphi(U) && \ar[ll]^/.9 em/{\eta} \varphi'(U) \\
& U \ar[ru]_{\varphi'} \ar[lu]^{\varphi} \ar[uu]_/.9em/{~\varphi \times \varphi'}
}
\end{xy}
\end{align*}
commutes, where $\varphi \times \varphi'\colon U \rightarrow \A^r \times \prod_{i = 1}^{r} \left(\A^{s_i} \times \A^{t_i}\right)$.

Since $\eta$ is a torus equivariant map composed with a multiplicative translation, it induces an extended affine map $\Trop(\eta)$
on the tropicalizations, which is given in the following way: 
We denote a point in $\prod_{i = 1}^{r} \left(\T^{s_i} \times \T^{t_i} \right)$ by $(y_1, z_1, \dots ,y_r,z_r)$ where 
$y_i = (y_{i,1}, \dots, y_{i,s_i}) \in \T^{s_i}$ and $z_i = (z_{i,1},\dots,z_{i,t_i}) \in \T^{t_i}$. 
Then for each $i$ we have
\begin{align*}
\Trop(\eta_i)\colon  \T^{s_i} \times \R^{t_i} &\rightarrow \T \\
 (y_{i,1}, \dots, y_{i,s_i}, z_{i,1},\dots,z_{i,t_i}) &\mapsto \sum y_{i,j} - \sum z_{i,k} + \log (c_i /d_i) .
\end{align*}
and 
\begin{align*}
\Trop(\eta)\colon \prod_{i = 1}^{r} \left(\T^{s_i} \times \R^{t_i} \right) &\rightarrow \T^r \\
(y_1, z_1, \dots ,y_r,z_r) &\mapsto (\Trop(\eta_i)(y_i, z_i))_{i \in [r]}. 
\end{align*}
We obtain the following commutative diagram of tropicalizations:
\begin{align*}
\begin{xy}
\xymatrix{
& \Trop_{\varphi \times \varphi'}(U) \ar[ld]_{\Trop(\pi_{1})} \ar[rd]^{\Trop(\pi_{2})}  \\
\Trop_{\varphi}(U)   && \ar[ll]^{\Trop(\eta)} \Trop_{\varphi'}(U)
}
\end{xy}
\end{align*}
For $\alpha \in \AS^{p,q}(\varphi_{\trop}(V))$ we define $\alpha' := \Trop(\eta)^*\alpha \in \AS^{p,q}(\varphi'_{\trop}(V))$. 
Note that $V = \varphi'^{-1}_{\trop}(\Trop(\eta)^{-1}(\varphi_{\trop}(V))$, thus  $(V, \varphi')$ is indeed a tropical chart.
Now the discussion after Definition \ref{defn:GATcharts} shows that $(V, \varphi \times \varphi')$ is a
common subchart of $(V, \varphi)$ and $(V, \varphi')$. 
The commutativity of the last diagram shows that $\Trop(\pi_1)^* \alpha = \Trop(\pi_2)^* \alpha'$ which precisely means 
$(V, \varphi, \alpha) = (V, \varphi', \alpha') \in \AS^{p,q}(V)$.
\end{proof}

\begin{lem} \label{lemma existence chart}
Let $V$ be a standard open subset of $\A^{1, \an}$. 
Then $V$ admits a linear $\A$-tropical chart $(V, \varphi)$. 
\end{lem}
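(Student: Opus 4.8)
The statement to prove is Lemma~\ref{lemma existence chart}: every standard open subset $V$ of $\A^{1,\an}$ admits a linear $\A$-tropical chart $(V,\varphi)$. By definition, $V$ has the form $V = \{\rho \in \A^{1,\an} \mid a_i < \rho(f_i) < b_i,\ i = 1,\dots,n\}$ for finitely many polynomials $f_i \in K[x]$ and bounds $a_i,b_i \in \R \cup \{\infty\}$. Since $K$ is algebraically closed, each $f_i$ factors as $f_i = c_i \prod_{j}(x - \alpha_{ij})$ with $c_i \in K^\times$ and $\alpha_{ij} \in K$. Collecting all the roots appearing, we obtain a finite list of points $\alpha_1,\dots,\alpha_r \in K$, and I let $\varphi\colon \A^1 \to \A^r$ be the linear embedding given by $x - \alpha_1,\dots,x-\alpha_r$; this is a closed embedding because even a single coordinate $x - \alpha_1$ already defines a closed embedding $\A^1 \hookrightarrow \A^1$, and adjoining further coordinates keeps it closed. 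Note $\varphi(\A^1)$ meets $\G_m^r$ (take $x$ avoiding all $\alpha_k$), so $\varphi$ is admissible in the sense of Definition~\ref{Def. transition map}.

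\textbf{Key steps.} First I would verify that $\varphi_{\trop}(\rho)$ determines $\rho(f_i)$ for every $i$: since $\log\rho(f_i) = \log|c_i| + \sum_j \log\rho(x - \alpha_{ij})$ and each $\log\rho(x - \alpha_{ij})$ is the corresponding coordinate of $\varphi_{\trop}(\rho)$, the value $\log\rho(f_i)$ is an affine function $L_i$ of the coordinates on $\T^r$ (extended appropriately to $\TT^r$, using that $\log|c_i|$ is finite and the sum of $\TT$-valued coordinates extends continuously). Second, I set
\begin{align*}
\Omega := \{ z \in \Trop_\varphi(\A^1) \mid a_i < L_i(z) < b_i,\ i = 1,\dots,n \},
\end{align*}
which is an open subset of $\Trop_\varphi(\A^1)$ because each $L_i$ is continuous on $\TT^r$. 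Third, I check $V = \varphi_{\trop}^{-1}(\Omega)$: for $\rho \in \A^{1,\an}$ we have $\rho \in V$ iff $a_i < \rho(f_i) < b_i$ for all $i$ iff (applying $\log$, and treating the edge cases $a_i = 0$ giving $\log a_i = -\infty$, $b_i = \infty$ giving $\log b_i = \infty$) $a_i < L_i(\varphi_{\trop}(\rho)) < b_i$ for all $i$, i.e.\ $\varphi_{\trop}(\rho) \in \Omega$. Since $\varphi_{\trop}$ surjects onto $\Trop_\varphi(\A^1)$ by definition, this shows $\Omega \subset \Trop_\varphi(\A^1)$ is exactly the image constraint and $(V,\varphi)$ is an $\A$-tropical chart; as $\varphi$ is a linear embedding it is a linear $\A$-tropical chart.

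\textbf{Main obstacle.} The genuinely delicate point is the bookkeeping at the boundary $\TT^r \setminus \R^r$ — i.e.\ making sure the affine functions $L_i$ extend continuously to $\Trop_\varphi(\A^1) \subset \TT^r$ and that strict inequalities behave correctly there (for instance when some $\rho$ has $\rho(x-\alpha_k) = 0$, so that a coordinate is $-\infty$). One must use that $\log\rho(f_i) = \log|c_i| + \sum_j \log\rho(x-\alpha_{ij})$ with all summands in $\TT = [-\infty,\infty)$, so the sum is a well-defined continuous $\TT$-valued function, hence the condition $a_i < \rho(f_i)$ (with $a_i \geq 0$) translates cleanly, and $\rho(f_i) < b_i$ likewise. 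I do not expect any conceptual difficulty beyond this; the construction itself is immediate once one factors the $f_i$ over the algebraically closed field $K$. A minor additional point worth a sentence is to record that $\varphi$ being a \emph{closed} embedding (not just a morphism) is needed for $(V,\varphi)$ to qualify as an $\A$-tropical chart, and this holds since projection to any single coordinate $x - \alpha_1$ is already an isomorphism onto its image.
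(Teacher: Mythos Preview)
Your proof is correct and takes a more direct route than the paper's. The paper first builds a (not necessarily linear) $\A$-tropical chart by using the polynomials $f_1,\dots,f_r$ together with additional generators $g_1,\dots,g_s$ so that they generate $K[x]$; the set $V$ is then the preimage of a product of intervals in $\TT^{r+s}$. Only afterwards does the paper invoke Proposition~\ref{Proposition 3 ins 1} as a black box to replace this chart by a linear one. Your argument instead factors the $f_i$ into linear factors at the outset and uses the roots directly to define a linear embedding, then exhibits $V$ as the preimage of an open set cut out by affine inequalities in the coordinates. This is essentially unwinding the relevant part of the proof of Proposition~\ref{Proposition 3 ins 1} and merging it with the lemma, which makes the argument self-contained but slightly longer; the paper's version is terser because it recycles a proposition already proved for other purposes.

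One tiny edge case you should address in a final write-up: if every $f_i$ happens to be a nonzero constant (so there are no roots and $r=0$), your $\varphi$ is not defined; in that case $V$ is either empty or all of $\A^{1,\an}$, and one can simply take $\varphi = \id\colon \A^1 \to \A^1$ given by $x$. Alternatively, just adjoin $x$ as an extra coordinate from the start.
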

\begin{proof} 
By definition
\begin{align*}
V = \{ x \in \A^{1,\an} \mid b_i < |f_i(x)| < c_i, i = 1,\dots,r\}
\end{align*}
for polynomials $f_1,\dots,f_r \in K[x]$ and elements $b_i \in \R$ and $c_i \in \R_{>0}$. 
We take $g_1,\dots,g_s$ such that $f_1,\dots,f_r,g_1,\dots,g_s$ generate $K[x]$ as a $K$-algebra
and denote by $\varphi$ the corresponding closed embedding. 
Then $V$ is precisely the preimage under $\varphi_{\trop} \colon \A^{1,\an}\to \T^{r+s}$ of the product of intervals of the form 
$[ - \infty, \log(c_i) )$, $(\log(b_{i}), \log(c_{i}))$ and $[-\infty, \infty)$ in $\T^{r+s}$. 
Thus $(V, \varphi)$ is an $\A$-tropical chart, and so the claim follows by Proposition \ref{Proposition 3 ins 1}.
\end{proof}

\begin{lem}\label{Lemma onechart}
Let $V$ be an open subset of $\A^{1,\an}$ which admits an $\A$-tropical chart $( V, \varphi)$. 
For every $\alpha \in \AS^{p,q}_c(V)$ there exists an $\A$-tropical chart $(V,\Phi)$ with $\Phi$ defined on all of $\A^1$ such that
$\alpha$ is the pullback of a form $\alpha' \in \AS^{p,q}_c(\Phi_{\trop}(V))$. 
\end{lem}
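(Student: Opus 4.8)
The plan is to take a compactly supported form $\alpha \in \AS^{p,q}_c(V)$ given, via the tropical chart $(V,\varphi)$ with $\varphi \colon U \to \A^r$ defined on a proper affine open $U \subsetneq \A^1$, by a form $\alpha'$ on $\varphi_{\trop}(V)$ whose support $\supp(\alpha') = \varphi_{\trop}(\supp(\alpha))$ is compact by Proposition \ref{pullback injective}. The affine open $U$ is the complement in $\A^1$ of finitely many points, say $U = \A^1 \setminus \{c_1,\dots,c_m\}$. The idea is to enlarge the embedding by throwing in the coordinate functions $1/(x - c_\ell)$ for $\ell = 1,\dots,m$, producing a closed embedding $\Phi_0 \colon U \to \A^{r+m}$ which is a refinement of $\varphi$ (project off the last $m$ coordinates). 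But $\Phi_0$ is still only defined on $U$; the key point is that on the tropical side, the extra coordinates $\log|x - c_\ell|^{-1} = -\log\rho(x-c_\ell)$ become unbounded (tend to $+\infty$) exactly as $\rho$ approaches one of the missing points $c_\ell$, so the missing points of $U$ are pushed "off to infinity" in $\Trop_{\Phi_0}(U)$.

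First I would record that, since $\supp(\alpha)$ is compact and contained in $V \subset U^{\an}$, it stays at bounded distance from the points $c_1,\dots,c_m$; concretely there is $N > 0$ with $\rho(x - c_\ell) \ge e^{-N}$ for all $\rho \in \supp(\alpha)$ and all $\ell$. Hence $\varphi_{\trop}(\supp(\alpha))$, and more precisely $\Phi_{0,\trop}(\supp(\alpha))$, lies in a region where the last $m$ coordinates are $\le N$. Next I would let $\Phi \colon \A^1 \to \A^{r+m}$ be obtained by composing: we need a closed embedding of all of $\A^1$. Concretely I would take $\Phi$ to be the linear-type embedding given by the original $f_i$ restricted suitably together with the regular functions needed — but since the $f_i$ may have poles, the honest move is to use the refinement $\varphi \times \psi$ where $\psi \colon \A^1 \to \A^m$, $x \mapsto (x - c_1, \dots, x - c_m)$, is a genuine closed embedding defined on all of $\A^1$. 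Then $\varphi \times \psi$ is defined on $U$ and is a refinement of $\varphi$; more importantly, I claim one can instead use $\Phi := $ the embedding of all of $\A^1$ obtained by using $\psi$ together with enough coordinates so that $\varphi$ factors through it on $U$. The set $V = \varphi_{\trop}^{-1}(\Omega)$ equals $\Phi_{\trop}^{-1}(\Omega')$ for $\Omega' = \Trop(\text{transition})^{-1}(\Omega) \cap \Phi_{\trop}(U^{\an})$, so $(V,\Phi)$ is an $\A$-tropical chart with $\Phi$ defined on $\A^1$; and by the discussion after Definition \ref{defn:GATcharts}, $(V, \varphi \times \Phi)$ is a common subchart, so by Proposition \ref{pullback injective} and injectivity of pullback the form $\alpha$ is the pullback of a unique $\alpha' \in \AS^{p,q}(\Phi_{\trop}(V))$.

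It remains to check that $\alpha'$ has \emph{compact} support in $\Phi_{\trop}(V)$. By Proposition \ref{pullback injective}, $\supp(\alpha') = \Phi_{\trop}(\supp(\alpha))$, which is compact as the continuous image of the compact set $\supp(\alpha)$. This is the whole point: passing to a chart defined on all of $\A^1$ cannot destroy compactness of the support because compactness is preserved by continuous images, whereas it could a priori fail to be \emph{closed in} $\Phi_{\trop}(V)$ — but a compact subset of a Hausdorff space is closed, and $\Phi_{\trop}(V)$ is Hausdorff (it is a subset of $\T^{r+m}$), so $\supp(\alpha')$ is compact and closed in $\Phi_{\trop}(V)$, hence $\alpha' \in \AS^{p,q}_c(\Phi_{\trop}(V))$. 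The main obstacle I anticipate is the bookkeeping in constructing $\Phi$ so that it is simultaneously (i) a closed embedding of all of $\A^1$, (ii) a refinement of $\varphi$ over $U$ so that the chart identity $V = \Phi_{\trop}^{-1}(\Omega')$ holds and $\alpha$ genuinely pulls back from $\Phi_{\trop}(V)$; once that is set up, the compact-support claim is the soft part above. One clean way to handle (i)–(ii) uniformly is to apply Proposition \ref{Proposition 3 ins 1} first to replace $\varphi$ by a linear $\A$-tropical chart, and then observe that a linear embedding together with the coordinates $x - c_\ell$ for the finitely many punctures is automatically a linear embedding of all of $\A^1$ through which the original one factors.
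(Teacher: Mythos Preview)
Your proposal has a genuine gap at the very start: you assume that $\alpha$ is already represented by a \emph{single} triple $(V,\varphi,\alpha')$, i.e.\ that $\alpha$ is the pullback of some $\alpha'\in\AS^{p,q}(\varphi_{\trop}(V))$. But by Definition~\ref{Def. forms} an element $\alpha\in\AS^{p,q}_c(V)$ is only given by a family $(V_i,\varphi_i,\alpha_i)_{i\in I}$ of local charts with $\bigcup V_i=V$; there is no reason a priori that it comes from a single chart on all of $V$. Producing such a single global representation is exactly the content of the lemma, so your argument assumes the conclusion. Relatedly, when you write that ``by Proposition~\ref{pullback injective} and injectivity of pullback the form $\alpha$ is the pullback of a unique $\alpha'\in\AS^{p,q}(\Phi_{\trop}(V))$,'' note that injectivity gives only uniqueness, not existence; existence is what you must establish.

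The paper's proof addresses this directly: starting from the family $(V_i,\varphi_i,\alpha_i)$, one uses compactness of $\supp(\alpha)$ to pass to a finite subfamily $I'$, applies Proposition~\ref{Proposition 3 ins 1} to each $\varphi_i$ so that it is linear (hence defined on all of $\A^1$), sets $\Phi:=\varphi\times\prod_{i\in I'}\varphi_i$, pulls back each $\alpha_i$ to $\Phi_{\trop}(V_i)$, and then \emph{glues} these local pieces to a single $\alpha'$ on $\Phi_{\trop}(V')$ using injectivity of $\Phi_{\trop}^*$ (Proposition~\ref{pullback injective}) to verify agreement on overlaps. Only after this gluing step does one get a form on a single chart, which is then extended by zero. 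Your sketch contains none of this finite-cover-and-glue mechanism. Your closing remark about applying Proposition~\ref{Proposition 3 ins 1} is the right tool, but it must be applied to the $\varphi_i$ defining $\alpha$, not merely to the ambient chart $\varphi$ of $V$.
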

\begin{proof}
Let $\alpha \in \AS^{p,q}_c(V)$ be given by a family $(V_i, \varphi_i, \alpha_i)_{i \in I}$. 
We fix a finite subset $I'$ of $I$ such that $\supp(\alpha) \subset V' := \bigcup_{i \in I'} V_i$. 
Since linear $\A$-tropical charts in particular are defined on all of $\A^1$, 
we may assume by Proposition \ref{Proposition 3 ins 1} that each of the $\varphi_i$ is defined on all of $\A^1$. 
Denote $\Phi := \varphi \times \prod_{i \in I'} \varphi_i$, which is a refinement of all $\varphi_i$ with $i \in I'$ and $\varphi$. 
Thus $(V, \Phi)$ and $(V_i, \Phi)$ for $i \in I'$ are $\A$-tropical charts 
by the discussion after Definition \ref{defn:GATcharts} and consequently $(V', \Phi)$ is. 
We denote by $\alpha'_i$ the pullback of $\alpha_i$ to $\Phi_{\trop}(V_i)$.  
Then $\alpha'_i|_{\Phi_{\trop}(V_j)} - \alpha'_j \vert_{\Phi_{\trop}(V_i)} = 0$ since 
$\Phi_{\trop}^*(\alpha'_i|_{\Phi_{\trop}(V_j)} - \alpha'_j \vert_{\Phi_{\trop}(V_i)}) = \alpha|_{V_i \cap V_j} - \alpha|_{V_j \cap V_i} = 0$
and $\Phi_{\trop}^*$ is injective by Proposition \ref{pullback injective}.
Thus the forms $(\alpha'_i)_{i \in I'}$ glue to
a form $\alpha' \in \AS^{p,q}(\Phi_{\trop}(V'))$ which pulls back to $\alpha|_{V'}$. 
By Proposition \ref{pullback injective}, we have $\supp(\alpha') = \Phi_{\trop}(\supp(\alpha|_{V'}))$, thus it is compact.  
Then extending $\alpha'$ by zero to a form on $\Phi_{\trop}(V)$ shows that $\alpha$ can be defined 
by one triple $(V, \Phi, \alpha')$. 
\end{proof}

\begin{satz} \label{thm Cohom. lineare trop.}
Let $V$ be a standard open subset of $\A^{1, \an}$. 
Then 
\begin{align} 
\label{limit forms} \AS^{p,q}_c(V) &= \varinjlim \AS^{p,q}_c(\varphi_{\trop}(V)) \text{ and } \\
\label {limit cohomology} \HH^{p,q}_c(V) &= \varinjlim \HH^{p,q}_c(\varphi_{\trop}(V)),
\end{align}
where the limits run over the linear $\A$-tropical charts $(V,\varphi)$.

Further, for a linear $\A$-tropical chart $(V, \varphi)$ we have that
\begin{align}
\label{iso cohomology} \HH^{p,q}_c(\varphi_{\trop}(V)) \rightarrow \HH^{p,q}_c(V)
\end{align}
is an isomorphism. 
\end{satz}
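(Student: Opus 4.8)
The plan is to combine the two key inputs developed in this section: Lemma \ref{Lemma onechart}, which lets us represent any compactly supported form on $V$ using a single $\A$-tropical chart, and Theorem \ref{theorem refinement modification}, which identifies transition maps between linear $\A$-tropical charts with tropical modifications. First I would establish the statement \eqref{limit forms}. The map $\varinjlim \AS^{p,q}_c(\varphi_{\trop}(V)) \to \AS^{p,q}_c(V)$ induced by pullback is injective on each term by Proposition \ref{pullback injective}, hence injective on the limit; it is surjective by Lemma \ref{Lemma onechart}, since every $\alpha \in \AS^{p,q}_c(V)$ is the pullback of a compactly supported form along some linear $\A$-tropical chart (note that by Proposition \ref{Proposition 3 ins 1} and Lemma \ref{lemma existence chart} such charts exist and are cofinal among all charts with domain $V$). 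Here I must check that the linear $\A$-tropical charts $(V,\varphi)$ with fixed $V$ form a directed system: given two such charts $\varphi, \varphi'$, the product $\varphi \times \varphi'$ is a common linear refinement, and the induced maps on forms are compatible with the transition maps, so the colimit is well defined.

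Next, for \eqref{limit cohomology} I would note that the complex $\AS^{p,\bullet}_c(V)$ with $d''$ is, by \eqref{limit forms}, the colimit over linear $\A$-tropical charts $(V,\varphi)$ of the complexes $\AS^{p,\bullet}_c(\varphi_{\trop}(V))$ with $d''$, and that $d''$ commutes with pullback along the (extended affine) transition maps. Since filtered colimits are exact and commute with taking cohomology of complexes of vector spaces, we obtain $\HH^{p,q}_c(V) = \varinjlim \HH^{p,q}_c(\varphi_{\trop}(V))$.

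Finally, for \eqref{iso cohomology} I would fix a linear $\A$-tropical chart $(V,\varphi)$ and show that for every linear refinement $\varphi'$ of $\varphi$ with the same $V$, the transition map $\HH^{p,q}_c(\varphi_{\trop}(V)) \to \HH^{p,q}_c(\varphi'_{\trop}(V))$ is an isomorphism. By factoring $\varphi'$ into a chain of one-variable-at-a-time linear refinements, it suffices to treat the case $\varphi' \colon \A^1 \to \A^{r+1}$ adding a single coordinate $x-b$. Then Theorem \ref{theorem refinement modification} tells us the transition map $\Trop(\pi) \colon \Trop_{\varphi'}(\A^1) \to \Trop_\varphi(\A^1)$ is a tropical modification, and setting $\Omega := \varphi_{\trop}(V)$, $\Omega' = \Trop(\pi)^{-1}(\Omega) = \varphi'_{\trop}(V)$ (using that $\varphi_{\trop} = \Trop(\pi) \circ \varphi'_{\trop}$ and that $\varphi'_{\trop}$ is surjective onto $\Trop_{\varphi'}(\A^1)$), Proposition \ref{prop:closedmodification} shows $\Trop(\pi)^* \colon \HH^{p,q}_c(\Omega) \to \HH^{p,q}_c(\Omega')$ is an isomorphism. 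Hence all transition maps in the directed system from \eqref{limit cohomology} are isomorphisms, so each canonical map $\HH^{p,q}_c(\varphi_{\trop}(V)) \to \varinjlim \HH^{p,q}_c(\varphi_{\trop}(V)) = \HH^{p,q}_c(V)$ is an isomorphism. The main obstacle I anticipate is bookkeeping around cofinality and directedness: one needs to confirm that the linear $\A$-tropical charts with fixed open set $V$ really are cofinal among all $\A$-tropical charts representing forms on $V$ (so that Lemma \ref{Lemma onechart} applies in the linear setting), and that the identification $\varphi'_{\trop}(V) = \Trop(\pi)^{-1}(\varphi_{\trop}(V))$ holds — this is exactly the compatibility $V = \varphi'^{-1}_{\trop}(\Trop(\pi)^{-1}(\Omega))$ recorded in the discussion after Definition \ref{defn:GATcharts}, combined with surjectivity of $\varphi'_{\trop}$ onto its image.
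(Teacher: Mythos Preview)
Your proposal is correct and follows essentially the same route as the paper: surjectivity of \eqref{limit forms} via Lemma \ref{Lemma onechart} plus Proposition \ref{Proposition 3 ins 1}, injectivity via Proposition \ref{pullback injective}, \eqref{limit cohomology} from exactness of filtered colimits, and \eqref{iso cohomology} by factoring linear refinements into single-coordinate steps and invoking Theorem \ref{theorem refinement modification} together with Proposition \ref{prop:closedmodification}. Your added bookkeeping (directedness of linear charts via products, the identity $\varphi'_{\trop}(V)=\Trop(\pi)^{-1}(\varphi_{\trop}(V))$) is accurate and just makes explicit what the paper leaves implicit.
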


\begin{proof}
For any $\A$-tropical chart $(V,\varphi)$, the pullback along the proper map $\varphi_{\trop}$ induces a well defined morphism 
$\AS^{p,q}_c(\varphi_{\trop}(V))\to\AS^{p,q}_c(V)$.
By Definition \ref{Def. forms}, this map is compatible with pullback between charts. 
Thus the universal property of the direct limit leads to a morphism $\Psi\colon\varinjlim \AS^{p,q}_c(\varphi_{\trop}(V))\to \AS^{p,q}_c(V)$,
where the limit runs over all linear $\A$-tropical charts of $V$. 
By Lemma \ref{lemma existence chart}, there exists an  $\A$-tropical chart for $V$. 
Further, by Lemma \ref{Lemma onechart}, every $\alpha \in \AS^{p,q}_c(V)$ can be defined by one chart $(V, \varphi)$ 
which we may assume to be a linear $\A$-tropical chart by Proposition \ref{Proposition 3 ins 1}. 
Hence, $\Psi$ is surjective. Since the pullback along $\varphi_{\trop}$ is injective by Proposition \ref{pullback injective} and 
$\alpha \in \AS^{p,q}_c(V)$ can be defined by only one chart, the morphism $\Psi$ is injective as well.
This shows (\ref{limit forms}). 
Equation (\ref{limit cohomology}) follows because direct limits commute with cohomology. 

By Theorem \ref{theorem refinement modification}, 
in (\ref{limit cohomology}) all transition maps are pullbacks along compositions of tropical modifications,
thus isomorphisms by  Proposition \ref{prop:closedmodification}. 
This shows (\ref{iso cohomology}). 
\end{proof}

\section{Poincar\'e duality} \label{Section 4}

The goal of this section is to prove Poincar\'e duality for a class of open subsets of $\Xan$ for a smooth algebraic curve $X$.  
To do this, we will first prove a lemma which lets us deduce Poincar\'e duality from local considerations.
Poincar\'e duality is our key statement to prove Theorem \ref{Theorem Mumford curve Kohom.} and 
Theorem \ref{Theorem Teilmenge Koho.}, where we calculate the dimension of the cohomology.

\begin{bem} \label{bem invariant}
From the definition of $\AS^{p,q}$ given in this paper (cf.~Definition \ref{Def. forms}), it is not clear 
that this definition (and the associated $d''$, the wedge product and integration) 
are functorial along analytic morphisms which do not come from algebraic morphisms.

We will briefly explain why this is the case, at least when $K$ is non-trivially valued. 
In that case, there is the definition of $\AS^{p,q}$ by Gubler \cite{Gubler}, 
which is equivalent to the one presented here by \cite[Theorem 3.2.41 \& Lemma 3.2.59]{JellThesis}. 
There is also the definition of Chambert--Loir and Ducros, which is purely analytic, thus in particular functorial along analytic morphisms. 
This approach is also equivalent to the one by Gubler, as is shown in \cite[Proposition 7.2 \& Proposition 7.11]{Gubler}.

In total, we obtain that if $K$ is non-trivially valued, our construction of $\AS^{p,q}$, $d''$, $\vedge$ and integration is equivalent 
to the ones by Chambert--Loir and Ducros, thus in particular functorial along analytic morphisms. 
This implies that $\PD$ is also functorial since this is just a combination of the wedge product and integration.
\end{bem}

Recall the definitions of $\LS^p$ and $\GS^p$ from Definition \ref{Defn nach forms}. 
Note that $\LS^p(V) = \HH^{p,0}(V)$ and $\GS^p(V) = \HH^{1-p,1}_c(V)^*$ for an open subset $V\subset \Xan$.  
We will use many times that $\HH^{q}({\AS^{1-p,1- \bullet}_c}^*(V), d''^*) = \HH^{1-p,1-q}_c(V)^*$ (cf.~Lemma \ref{Lem compact support sheaf}).
We start with the following general observation, which allows us to prove $\PD$ using local considerations. 

\begin{lem} \label{Lem PD local}
Let $X$ be an algebraic curve and $V \subset \Xan$ an open subset. 
Assume that $\PD\colon \LS^p_V \rightarrow \GS^{1-p}_V$ is an isomorphism of sheaves on $V$ and 
that further the complex $({\AS^{1-p, 1-\bullet}_{V, c}}^*,d''^*)$ is exact in positive degree.
Then $V$ has $\PD$.
\end{lem}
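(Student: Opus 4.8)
The plan is to compare two computations of the same sheaf cohomology, namely $\HH^q(V,\LS^p_V)$, via two different resolutions. First I would observe that by Theorem \ref{Identification with singular cohomology}, the complex $(\AS^{p,\bullet}_V, d'')$ is an acyclic resolution of $\LS^p_V$, so that $\HH^{p,q}(V) = \HH^q(V,\LS^p_V)$. On the other side, the hypotheses give us two facts about the complex $\CS^\bullet := {\AS^{1-p,1-\bullet}_{V,c}}^*$: by Lemma \ref{Lem compact support sheaf} each $\CS^j$ is a flasque sheaf, hence acyclic for global sections, and we are assuming the complex is exact in positive degree. Its kernel in degree $0$ is, by Definition \ref{Defn nach forms}, the sheaf $\GS^{1-p}_V$. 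Therefore $(\CS^\bullet, d''^*)$ is a flasque (hence acyclic) resolution of $\GS^{1-p}_V$, and so $\HH^q(\CS^\bullet(V)) = \HH^q(V, \GS^{1-p}_V)$. By Lemma \ref{Lem compact support sheaf} again, the left-hand side equals $\HH^{1-p,1-q}_c(V)^*$.

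Next I would feed in the assumption that $\PD\colon \LS^p_V \to \GS^{1-p}_V$ is an isomorphism of sheaves on $V$. This immediately yields an isomorphism on sheaf cohomology $\HH^q(V,\LS^p_V) \cong \HH^q(V,\GS^{1-p}_V)$ for every $q$. Combining with the two identifications above gives an abstract isomorphism $\HH^{p,q}(V) \cong \HH^{1-p,1-q}_c(V)^*$.

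The one genuine subtlety — and the step I would treat most carefully — is that we must check this abstract isomorphism is realized by the Poincaré duality map of Definition \ref{Def. PD}, not merely by some isomorphism. This follows because the sheaf map $\PD$ induces, by construction (it is a morphism of complexes, cf.~Lemma \ref{lemma PD injective} and Definition \ref{Def. PD}), a morphism of resolutions: the diagram relating $\LS^p_V \hookrightarrow (\AS^{p,\bullet}_V,d'')$ and $\GS^{1-p}_V \hookrightarrow (\CS^\bullet, d''^*)$ commutes, with the vertical arrows being $\PD$ in degree $0$ and the $\PD$ maps on forms in higher degrees. By the standard fact that cohomology of acyclic resolutions is functorial in such morphisms of resolutions, the induced map on $\HH^q$ is precisely the map on $\HH^{p,q}(V)$ induced by $\PD$ on forms, which after the identification of Lemma \ref{Lem compact support sheaf} is exactly the map $\PD\colon \HH^{p,q}(V) \to \HH^{1-p,1-q}_c(V)^*$ from Definition \ref{Def. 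PD}. Since $\PD$ is an isomorphism of sheaves, this map is an isomorphism for all $q$, and running over all $(p,q)$ (note $p \in \{0,1\}$ so $1-p$ ranges over the same set) shows $V$ has $\PD$.

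I expect the main obstacle to be purely expository rather than mathematical: making the functoriality-of-resolutions argument precise enough that the reader is convinced the resulting isomorphism is the $\PD$ map and not an unrelated one. One clean way to package this is to note that both $(\AS^{p,\bullet}_V,d'')$ and $(\CS^\bullet,d''^*)$ compute derived functors (global sections, resp.~with the flasque resolution), that $\PD$ is a map of complexes lifting the sheaf isomorphism $\LS^p_V \xrightarrow{\sim}\GS^{1-p}_V$, and invoke the uniqueness up to homotopy of such lifts together with the compatibility already recorded in Lemma \ref{Lem compact support sheaf} that identifies $\HH^q(\CS^\bullet(V))$ with $\HH^{1-p,1-q}_c(V)^*$ compatibly with $d''^*$. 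Everything else is a diagram chase with no real content.
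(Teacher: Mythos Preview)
Your proposal is correct and follows essentially the same approach as the paper: both arguments identify $(\AS^{p,\bullet}_V,d'')$ and $({\AS^{1-p,1-\bullet}_{V,c}}^*,d''^*)$ as acyclic resolutions of $\LS^p_V$ and $\GS^{1-p}_V$ respectively, observe that $\PD$ is a morphism of these resolutions lifting the assumed sheaf isomorphism, and conclude via functoriality of cohomology computed from acyclic resolutions. Your discussion of the ``subtlety'' that the induced isomorphism is the actual $\PD$ map is more explicit than the paper's treatment, which simply records the relevant commutative diagrams; the mathematical content is the same.
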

\begin{proof}
In the given situation, $\PD\colon \AS^{p,\bullet}_{V} \rightarrow {\AS^{1-p,1-\bullet}_{V, c}}^*$ 
is a quasi-isomorphism of complexes of sheaves on $V$. 
This is the case because the complexes are both exact in positive degree 
(by assumption resp.~Theorem \ref{Identification with singular cohomology}) and the map restricts 
to an isomorphism on the zeroth cohomology (also by assumption).
Thus the left diagram is a commutative diagram of acylic resolutions 
(note that ${\AS^{1-p,1-\bullet}_{V, c}}^*$ is flasque by Lemma \ref{Lem compact support sheaf}),
and $\PD\colon \LS^p_V \rightarrow \GS^{1-p}_V$ is an isomorphism.
\begin{align*}
\begin{xy}
\xymatrix{
&\LS^p_V \ar[r] \ar[d]^{\PD} & (\AS^{p,\bullet}_V, d'') \ar[d]^{\PD}   &  \HH^q(V, \LS^p_V)\ar@{=}[r] \ar[d]^{\PD}  & \ar[d]^\PD \HH^q(\AS^{p, \bullet}(V), d'')  \\
&\GS^{1-p}_V \ar[r]& ({\AS^{1-p,1-\bullet}_{V,c}}^*, {d''}^*) &  \HH^{q}(V, \GS^{1-p}_V)\ar@{=}[r] & \HH^{q}(\AS^{1-p,1-\bullet}_c(V)^*, {d''}^*)
}
\end{xy}
\end{align*}

We thus obtain the diagram on the right since we can use the acyclic resolutions on the left to calculate the sheaf cohomology of $\LS^p_V$ resp.~$\GS_V^{1-p}$. 
Due to $\HH^q(\AS^{p, \bullet}(V), d'') = \HH^{p,q}(V)$ and $\HH^{q}(\AS^{1-p, 1-\bullet}_c(V)^*, {d''}^*) = \HH^{1-p,1-q}_c(V)^*$, the result follows.
\end{proof}

\begin{satz} \label{thm PD}
Let $V$ be an open subset of $\mathbb{P}^{1,\an}$. 
Then $\PD\colon \LS^p_V \rightarrow \GS^{1-p}_V$ is an isomorphism of sheaves on $V$ and 
the complex $({\AS^{1-p, 1-\bullet}_{V, c}}^*,d''^*)$ is exact in positive degree.
In particular, $V$ has $PD$.
\end{satz}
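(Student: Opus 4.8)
The statement has two parts: (1) $\PD\colon \LS^p_V \to \GS^{1-p}_V$ is an isomorphism of sheaves on $V$, and (2) the complex $({\AS^{1-p,1-\bullet}_{V,c}}^*, d''^*)$ is exact in positive degree; the final assertion then follows immediately from Lemma \ref{Lem PD local}. Both parts are local on $V$, so the plan is to reduce to a local model and then import the tropical results. Since $V \subset \mathbb{P}^{1,\an}$, every point $x \in V$ has a neighborhood that is isomorphic (as an analytic space) to an open subset of $\mathbb{A}^{1,\an}$; indeed any point of $\mathbb{P}^{1,\an}$ has such a neighborhood, and after shrinking we may assume this neighborhood is a standard open subset of $\mathbb{A}^{1,\an}$ in the sense of Definition \ref{Def. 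Trop. Grundlagen}. By Remark \ref{bem invariant}, the sheaves $\AS^{p,q}$, the operator $d''$, the wedge product and integration are functorial along analytic isomorphisms (when $K$ is non-trivially valued; the trivially valued case is handled separately, as the theorem is stated only for $\mathbb{P}^{1,\an}$ there), so all the structures in question transport along such an isomorphism. Hence it suffices to check both statements on stalks at points of a standard open subset $V_0$ of $\mathbb{A}^{1,\an}$.

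\textbf{Part (2): exactness of the compact-support dual complex.} For this I would argue that $\PD\colon \AS^{p,\bullet}_V \to {\AS^{1-p,1-\bullet}_{V,c}}^*$ is a quasi-isomorphism of complexes of sheaves. On stalks at $x$, a standard open neighborhood $V_0$ of $x$ in $\mathbb{A}^{1,\an}$ admits a linear $\A$-tropical chart $(V_0,\varphi)$ by Lemma \ref{lemma existence chart}, and by Theorem \ref{thm Cohom. lineare trop.} the pullback $\HH^{p,q}_c(\varphi_{\trop}(V_0)) \to \HH^{p,q}_c(V_0)$ is an isomorphism, with $\varphi_{\trop}(V_0)$ an open subset of the smooth tropical curve $\Trop_\varphi(\mathbb{A}^1)$ (Theorem \ref{theorem tropical manifold}). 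Shrinking further so that $\varphi_{\trop}(V_0)$ contains at most one vertex, the Poincar\'e lemma (Theorem \ref{Thm Poincare Lemma one vertex}) together with Corollary \ref{Kor PD} gives that the tropical complex $\AS^{p,\bullet}$ is exact in positive degree and that PD holds on $\varphi_{\trop}(V_0)$; dualizing the short exact sequences coming from tropical PD (using that dualizing is exact, Lemma \ref{Lem compact support sheaf}) then yields that $({\AS^{1-p,1-\bullet}_{c}}^*, d''^*)$ is exact in positive degree on $\varphi_{\trop}(V_0)$, and transporting back along $\varphi_{\trop}$ (using Theorem \ref{thm Cohom. lineare trop.} equation (\ref{iso cohomology}) and its form-level analogue) and then along the analytic isomorphism gives exactness in positive degree on a neighborhood of $x$ in $V$. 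Since exactness of a complex of sheaves is a local (stalkwise) condition, this proves (2).

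\textbf{Part (1): PD is an isomorphism on $\LS^p$ and $\GS^{1-p}$.} By Lemma \ref{lemma PD injective}, $\PD\colon \LS^p \to \GS^{1-p}$ is already a monomorphism of sheaves, so it remains to check surjectivity on stalks. Again this is local: on a standard open $V_0 \subset \mathbb{A}^{1,\an}$ with linear chart $(V_0,\varphi)$ chosen so that $\varphi_{\trop}(V_0)$ is an open subset of a smooth tropical curve containing at most one vertex, Corollary \ref{Kor PD} gives that $\varphi_{\trop}(V_0)$ has PD, which in particular (combining with Theorem \ref{Identification with singular cohomology} and Lemma \ref{lemma PD injective} applied tropically) forces $\PD\colon \LS^p \to \GS^{1-p}$ to be an isomorphism on $\varphi_{\trop}(V_0)$. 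The compatibility of PD with pullback along tropical modifications (Proposition \ref{prop:closedmodification}) and along the isomorphism $\varphi_{\trop}$ on cohomology, together with the identifications $\LS^p(W) = \HH^{p,0}(W)$ and $\GS^{1-p}(W) = \HH^{p,1}_c(W)^*$, then shows the stalk map is an isomorphism at $x$. As $x$ was arbitrary, $\PD\colon \LS^p_V \to \GS^{1-p}_V$ is an isomorphism of sheaves, and the final assertion follows from Lemma \ref{Lem PD local}.

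\textbf{Main obstacle.} The routine-looking but genuinely delicate point is the compatibility bookkeeping: one must ensure that the various identifications --- transport along the analytic isomorphism (Remark \ref{bem invariant}), the isomorphism (\ref{iso cohomology}) at the level of sheaves rather than just global cohomology, and the compatibility of all of these with the PD pairing (integration commuting with the relevant pullbacks, via Proposition \ref{prop:closedmodification} and the projection formula) --- fit together into an honest morphism-of-complexes-of-sheaves statement, so that Lemma \ref{Lem PD local} applies. The conceptual content is entirely in the already-cited tropical results; the work is in localizing correctly and propagating the sheaf-level isomorphism from tropical charts back to $V$.
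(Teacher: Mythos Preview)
Your approach is essentially the paper's, but there is one unnecessary detour that creates a small gap. You reduce from $\mathbb{P}^{1,\an}$ to $\mathbb{A}^{1,\an}$ by invoking Remark~\ref{bem invariant} (functoriality along analytic maps), and then flag the trivially valued case as needing separate treatment. This is not needed: $\mathbb{P}^{1,\an}$ is covered by two \emph{algebraic} opens isomorphic to $\mathbb{A}^{1,\an}$, so the reduction is already within the algebraic framework of Definition~\ref{Def. forms} and requires no appeal to analytic functoriality. The paper simply writes ``we may assume $V \subset \mathbb{A}^{1,\an}$'' for this reason, and the argument then works uniformly for trivially valued $K$ as well. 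Your aside that the trivially valued case ``is handled separately'' is not actually supported anywhere; dropping the detour through Remark~\ref{bem invariant} removes the issue entirely.

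For Part~(1) your argument is correct in spirit but vaguer than it needs to be. The paper makes the surjectivity step completely explicit via a sandwich: for a small linear chart $(W,\varphi_W)$ with $\Omega_W = \varphi_{W,\trop}(W)$, one has
\[
\LS^p(\Omega_W) \hookrightarrow \LS^p(W) \hookrightarrow \GS^{1-p}(W) \xrightarrow{\;\sim\;} \GS^{1-p}(\Omega_W) \xrightarrow{\;\sim\;} \LS^p(\Omega_W),
\]
where the first map is pullback along $\varphi_{W,\trop}$ (injective by Proposition~\ref{pullback injective}), the second is $\PD$ on $W$ (injective by Lemma~\ref{lemma PD injective}), the third is the dual of the isomorphism from Theorem~\ref{thm Cohom. lineare trop.}, and the fourth is the inverse of tropical $\PD$ (Corollary~\ref{Kor PD}). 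Since $\PD$ commutes with pullback, the composite is the identity, forcing the second arrow to be an isomorphism. This is exactly the compatibility you gesture at in your ``Main obstacle'' paragraph, made precise; writing it out this way avoids having to talk about stalks or ``transporting back'' at all.
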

\begin{proof}
That $\PD\colon \LS^p_V \rightarrow \GS^{1-p}_V$ is an isomorphism of sheaves on $V$ and 
that the complex $({\AS^{1-p, 1-\bullet}_{V, c}}^*,d''^*)$ is exact in positive degree are local conditions, 
thus we may assume $V \subset \A^{1, \an}$.
We can cover $V$ by linear $\A$-tropical charts $(W, \varphi_W)$ contained in $V$ by Lemma \ref{lemma existence chart}, 
and we can choose the $W$ sufficiently small such that
$\Omega_W := \varphi_{W, \trop}(W)$, which is an open subset of the tropical curve $\Trop_{\varphi_W}(\A^{1})$, 
is connected and 
has at most one vertex for some polyhedral structure $\CS$ on $\Trop_{\varphi_W}(\A^{1})$.
By Corollary \ref{Kor PD}, $\Omega_W$ has $\PD$.
Thus $\HH^{1-p,1-q}_c(\Omega_W)$ vanishes if and only if $\HH^{p,q}(\Omega_W)$ vanishes.
Since $\Omega_W$ has at most one vertex, by the Poincar\'e Lemma \ref{Thm Poincare Lemma one vertex} these groups vanish for $q > 0$.
Now by Theorem \ref{thm Cohom. lineare trop.}, we have an isomorphism $\HH^{1-p,1-q}_c(W) \simeq \HH^{1-p,1-q}_c(\Omega_W)$, which shows 
that $\HH^{1-p,1-q}_c(W)^*$ vanishes for $q > 0$. 
This proves exactness of ${\AS^{1-p, 1- \bullet}_{V, c}}^*$ in positive degree.

We further have the following maps 
\begin{align*}
\LS^p(\Omega_W) \inj \LS^p(W) \inj \GS^{1-p}(W) \simeq \GS^{1-p}(\Omega_W) \simeq \LS^{p}(\Omega_W).
\end{align*} 
Here, the first map is the pullback along $\varphi_{W, \trop}$, which is injective by Proposition \ref{pullback injective}. 
The second map is the $\PD$ map on $W$, which is injective by Lemma \ref{lemma PD injective}. 
The third map is the dual of the  pullback of $\varphi_{W, \trop}$ in cohomology with compact support, 
which is an isomorphism by Theorem \ref{thm Cohom. lineare trop.}. 
The fourth map is the inverse of the $\PD$ map on  $\Omega_W$, which is an isomorphism by Corollary \ref{Kor PD}. 
Since $\PD$ commutes with pullbacks, the composition is indeed the identity. 
In particular, $\PD\colon \LS^p \simeq \GS^{1-p}$ is an isomorphism. 

That $V$ has $\PD$ now follows from Lemma \ref{Lem PD local}. 
\end{proof}

In the next corollary, we need $K$ to be non-trivially valued to ensure functoriality along 
analytic maps.
\begin{kor} \label{korPD}
Assume that $K$ is non-trivially valued. 
Let $X$ be a smooth curve and $V \subset \Xan$ an open subset such that all points of type $2$ in $V$ have genus $0$. 
Then $V$ has $\PD$. 
In particular, if $X$ is a Mumford curve, every open subset of $\Xan$ has $\PD$.
\end{kor}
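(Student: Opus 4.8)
The plan is to reduce Corollary \ref{korPD} to Theorem \ref{thm PD} by exploiting the local nature of Poincaré duality together with Proposition \ref{Prop Thm Mumford curve}. First I would observe that, by Lemma \ref{Lem PD local}, it suffices to prove that $\PD\colon \LS^p_V \to \GS^{1-p}_V$ is an isomorphism of sheaves on $V$ and that the complex $({\AS^{1-p,1-\bullet}_{V,c}}^*, d''^*)$ is exact in positive degree. Both of these are \emph{local} conditions on $V$, so it is enough to verify them in a neighborhood of each point of $V$.

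Now I would invoke Proposition \ref{Prop Thm Mumford curve}: since every point of type $2$ in $V$ has genus $0$, the space $V$ is locally isomorphic to $\mathbb{P}^{1,\an}$, i.e.\ each point of $V$ has an open neighborhood $W$ that is isomorphic, \emph{as an analytic space}, to an open subset $W'$ of $\mathbb{P}^{1,\an}$. By Theorem \ref{thm PD}, the sheaf map $\PD\colon \LS^p_{W'} \to \GS^{1-p}_{W'}$ is an isomorphism and $({\AS^{1-p,1-\bullet}_{W',c}}^*, d''^*)$ is exact in positive degree. The key point is that the isomorphism $W \cong W'$ of analytic spaces transports these statements back to $W$: this is exactly where one needs the functoriality of $\AS^{p,q}$, $d''$, $\vedge$ and integration along arbitrary analytic morphisms, which holds because $K$ is non-trivially valued (Remark \ref{bem invariant}). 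In particular $\PD$ on $W$ corresponds under the isomorphism to $\PD$ on $W'$, so it is an isomorphism of sheaves on $W$, and the compact-support complex on $W$ is likewise exact in positive degree.

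Since $V$ is covered by such $W$, the sheaf isomorphism $\PD\colon \LS^p_V \to \GS^{1-p}_V$ and the exactness of $({\AS^{1-p,1-\bullet}_{V,c}}^*, d''^*)$ in positive degree follow from the local statements. Lemma \ref{Lem PD local} then yields that $V$ has $\PD$, proving the first assertion. For the final sentence, if $X$ is a Mumford curve then by Theorem \ref{Satz Mumford curve} (and Proposition \ref{Prop Thm Mumford curve}) the space $\Xan$ contains no type $2$ point of positive genus, so the hypothesis on $V$ is automatically satisfied for every open subset $V \subset \Xan$, and the general statement applies.

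The main obstacle, and the only genuinely subtle point, is the transport of Poincaré duality along the analytic isomorphism $W \cong W'$: one must be sure that $\AS^{p,q}$ together with its differential, wedge product and integration map is a genuinely analytic invariant, not merely invariant under algebraic isomorphisms. This is precisely the content of Remark \ref{bem invariant}, which relies on the comparison between the present definition, Gubler's definition, and the purely analytic construction of Chambert--Loir and Ducros, and which requires $K$ to be non-trivially valued. Everything else is a formal consequence of the local-to-global mechanism already packaged in Lemma \ref{Lem PD local}.
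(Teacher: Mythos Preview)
Your proposal is correct and follows essentially the same route as the paper's own proof: reduce to the local conditions of Lemma~\ref{Lem PD local}, use Proposition~\ref{Prop Thm Mumford curve} to pass to open subsets of $\mathbb{P}^{1,\an}$, transport via Remark~\ref{bem invariant}, apply Theorem~\ref{thm PD}, and conclude the Mumford case from Theorem~\ref{Satz Mumford curve}. You have also correctly isolated the one delicate point, namely the analytic (as opposed to merely algebraic) functoriality needed to move the statements through the isomorphism $W\cong W'$.
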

\begin{proof}
We claim that $\PD\colon \LS^p_V \rightarrow \GS^{1-p}_V$ is an isomorphism of sheaves on $V$ and 
that the complex $({\AS^{1-p, 1-\bullet}_{V, c}}^*,d''^*)$ is exact in positive degree.

Because this is a purely local question and $V$ is locally isomorphic to $\mathbb{P}^{1, \an}$ by Proposition \ref{Prop Thm Mumford curve}, 
we may assume that $V$ is isomorphic to an open subset of $\mathbb{P}^{1,\an}$.
Since $\AS^{\bullet, \bullet}$, $d''$ and $\PD$ are functorial along analytic maps by Remark \ref{bem invariant}, 
we may thus assume $V \subset \mathbb{P}^{1, \an}$. 
Now the claim follows from  Theorem \ref{thm PD}.

That $V$ has $\PD$ now follows from Lemma \ref{Lem PD local}. 
For a Mumford curve $X$, the analytification $\Xan$ contains no type $2$ points of positive genus 
by Theorem \ref{Satz Mumford curve} and Proposition \ref{Prop Thm Mumford curve}. 
Thus the statement for open subsets of Mumford curves follows.
\end{proof}

\begin{kor}\label{cor thm 3.13 ohne kompakt}
Let $V$ be a standard open subset of $\A^{1, \an}$
and $(V, \varphi)$ a linear $\A$-tropical chart.
Then we have  
$\HH^{p,q}(\varphi_{\trop}(V))\simeq \HH^{p,q}(V).$
\end{kor}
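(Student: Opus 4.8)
The plan is to deduce this from the compact-support statement, Theorem~\ref{thm Cohom. lineare trop.}, by playing it off against Poincar\'e duality on both sides. Write $\Omega := \varphi_{\trop}(V)$. First I would record that both sides have $\PD$. The set $\Omega$ is an open subset of the tropical curve $\Trop_\varphi(\A^1)$, which is smooth by Theorem~\ref{theorem tropical manifold}, so $\Omega$ has $\PD$ by Corollary~\ref{Kor PD}. On the other hand $V$ is a standard open subset of $\A^{1,\an}$, hence an open subset of $\mathbb{P}^{1,\an}$, so $V$ has $\PD$ by Theorem~\ref{thm PD}.

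Next comes the main point. The pullback $\varphi_{\trop}^*\colon \HH^{p,q}(\Omega)\to\HH^{p,q}(V)$ fits into the diagram
\begin{align*}
\begin{xy}
\xymatrix{
\HH^{p,q}(\Omega) \ar[r]^{\varphi_{\trop}^*} \ar[d]_{\PD} & \HH^{p,q}(V) \ar[d]^{\PD} \\
\HH^{1-p,1-q}_c(\Omega)^* & \HH^{1-p,1-q}_c(V)^* \ar[l]^{(\varphi_{\trop}^*)^\vee}
}
\end{xy}
\end{align*}
where the bottom map is the transpose of the compact-support pullback $\varphi_{\trop}^*\colon \HH^{1-p,1-q}_c(\Omega)\to\HH^{1-p,1-q}_c(V)$. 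I would check that this diagram commutes, i.e.\ that $\PD_\Omega = (\varphi_{\trop}^*)^\vee\circ\PD_V\circ\varphi_{\trop}^*$. This is precisely the naturality of $\PD$ along $\varphi_{\trop}$, which holds because $\PD$ is built from the wedge product and the integration map and both of these are compatible with $\varphi_{\trop}^*$ (Definition~\ref{Defn nach forms}) --- the same compatibility is already invoked in the proof of Theorem~\ref{thm PD}. By Theorem~\ref{thm Cohom. lineare trop.}, equation~(\ref{iso cohomology}), the compact-support pullback is an isomorphism, hence so is its transpose $(\varphi_{\trop}^*)^\vee$. The two vertical $\PD$-maps are isomorphisms by Corollary~\ref{Kor PD} and Theorem~\ref{thm PD}. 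Commutativity then forces $\varphi_{\trop}^*$ to be an isomorphism, being the composite of $\PD_\Omega$ with the inverses of $(\varphi_{\trop}^*)^\vee$ and of $\PD_V$; this is the assertion.

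I do not expect a genuine obstacle here: the argument is a short diagram chase built entirely from results already in the excerpt. The only step that requires some care is the commutativity of the square, namely that Poincar\'e duality is natural along the tropicalization map $\varphi_{\trop}$ even though $\varphi_{\trop}$ is not induced by an algebraic morphism. This reduces to the statement that integration of $(1,1)$-forms with compact support commutes with pullback along $\varphi_{\trop}$, which is part of the setup, so it should go through routinely.
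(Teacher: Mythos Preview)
Your proposal is correct and follows essentially the same approach as the paper: both arguments use the commutative square formed by the two $\PD$-maps and the two pullback maps, invoke Corollary~\ref{Kor PD} and Theorem~\ref{thm PD} for the $\PD$-isomorphisms, and Theorem~\ref{thm Cohom. lineare trop.} for the compact-support side, then read off that the remaining map is an isomorphism. The only cosmetic difference is the orientation of the square (you put $\varphi_{\trop}^*$ horizontally and $\PD$ vertically, the paper does the opposite), but the content is identical.
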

\begin{proof}
Since the wedge product and the 
integration map are compatible with pullback along $\varphi_{\trop}$, we find the following commutative diagram
\begin{align*}
\begin{xy}
\xymatrix{
\HH^{p,q}(\varphi_{\trop}(V)) \ar[rr]^{\PD} \ar[d] && \HH^{1-p,1-q}_c(\varphi_{\trop}(V))^* \\
\HH^{p,q}(V) \ar[rr]^{\PD} && \HH^{1-p,1-q}_c(V)^*. \ar[u]
}
\end{xy}
\end{align*}
Both $\PD$ maps are isomorphisms by Corollary \ref{Kor PD} and Theorem \ref{thm PD} and the right vertical map 
is an isomorphism by Theorem \ref{thm Cohom. lineare trop.}. 
Thus the left vertical map is one as well.
\end{proof}

\section{Cohomology of Mumford curves}\label{Section 5}

In this section, we give a calculation of $h^{p,q}(\Xan)$ for $\mathbb{P}^1$ and Mumford curves 
(cf.~Theorem \ref{Theorem Mumford curve Kohom.}). 
For these curves we can further determine $h^{p,q}$ and $h^{p,q}_c$ on a basis of the topology of $\Xan$.
Note that the Poincar\'e lemma proved in \cite{Jell} does not give a basis of open subsets $V$ such that $\HH^{p,q}(V)=0$ for $q>0$.
We will show that we obtain such a basis for open subsets of $\mathbb{P}^1$ and Mumford curves.

The key statement in this calculation is Poincar\'e duality for certain open subsets $V$ of $\Xan$ 
for smooth algebraic curves $X$ from Section \ref{Section 4}.

\begin{satz}\label{Theorem Mumford curve Kohom.}
Let either $X$ be $\mathbb{P}^1_K$ or $K$ be non-trivially valued and $X$ a Mumford curve over $K$.
We denote by $g$ the genus of $X$ and let  $p,q\in\{0,1\}$. 
Then we have
\begin{align*}
h^{p,q}(\Xan) = 
\begin{cases} 
1 \text{ if p = q}, \\
g \text{ else}.
\end{cases}
\end{align*}
\end{satz}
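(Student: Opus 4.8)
The plan is to combine Poincaré duality (Corollary \ref{korPD}, or Theorem \ref{thm PD} in the $\mathbb{P}^1$ case) with the identification of the $(0,q)$-cohomology with singular cohomology (Theorem \ref{Identification with singular cohomology}) and the fact that $\Xan$ is compact, connected and homotopy equivalent to its skeleton, a finite graph of first Betti number $g$ (Theorem \ref{Satz Mumford curve}). The four values $h^{p,q}(\Xan)$ for $p,q\in\{0,1\}$ then follow from four inputs: $h^{0,0}$, $h^{0,1}$, $h^{1,0}$, $h^{1,1}$, and by symmetry of the table only the first three really need separate treatment.

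First I would record that $h^{0,0}(\Xan) = h^0_{\sing}(\Xan) = 1$ since $\Xan$ is connected, and $h^{0,1}(\Xan) = h^1_{\sing}(\Xan) = g$: in the Mumford (or $\mathbb{P}^1$) case this is precisely the content of the equivalence of (ii) and (iii) in Theorem \ref{Satz Mumford curve}, i.e. the skeleton is a graph with first Betti number $g$. Next, since $X$ is proper, $\Xan$ is compact, so $\AS^{p,\bullet}_c(\Xan) = \AS^{p,\bullet}(\Xan)$ and hence $\HH^{p,q}_c(\Xan) = \HH^{p,q}(\Xan)$ for all $p,q$. Now apply Poincaré duality: $\Xan$ is locally isomorphic to $\mathbb{P}^{1,\an}$ and contains no type $2$ points of positive genus (Theorem \ref{Satz Mumford curve}, Proposition \ref{Prop Thm Mumford curve}), so by Corollary \ref{korPD} it has $\PD$, which together with the previous sentence gives an isomorphism $\HH^{p,q}(\Xan)\xrightarrow{\sim}\HH^{1-p,1-q}(\Xan)^*$. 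In particular $h^{1,1}(\Xan) = h^{0,0}(\Xan) = 1$ and $h^{1,0}(\Xan) = h^{0,1}(\Xan) = g$. In the case $X = \mathbb{P}^1_K$ with $K$ possibly trivially valued, one uses Theorem \ref{thm PD} in place of Corollary \ref{korPD}, and $g = 0$, $h^1_{\sing}(\mathbb{P}^{1,\an}) = 0$ since $\mathbb{P}^{1,\an}$ is contractible. Assembling: $h^{p,q}(\Xan) = 1$ when $p = q$ and $= g$ when $p\neq q$, as claimed.

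The only genuine subtlety is justifying the use of $\PD$, i.e. functoriality of $\AS^{\bullet,\bullet}$, $d''$, the wedge product and integration along the non-algebraic analytic isomorphisms used in Proposition \ref{Prop Thm Mumford curve} when one passes to a cover by open subsets of $\mathbb{P}^{1,\an}$; this is exactly why the hypothesis that $K$ is non-trivially valued appears in the Mumford case, and it is handled by Remark \ref{bem invariant} (comparison with the Chambert--Loir--Ducros forms). For $\mathbb{P}^1$ itself no analytic-but-not-algebraic identification is needed, so the argument goes through for trivially valued $K$ as well via Theorem \ref{thm PD}. Everything else — connectedness, compactness, the Betti number computation — is routine given the cited results, so I would expect the write-up to be short, essentially a two-step deduction once the bookkeeping of which Poincaré duality statement applies in which case is made explicit.
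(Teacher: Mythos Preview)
Your proposal is correct and follows essentially the same route as the paper's proof: compute $h^{0,0}$ and $h^{0,1}$ via the identification with singular cohomology and Theorem~\ref{Satz Mumford curve}, then deduce $h^{1,1}$ and $h^{1,0}$ from Poincar\'e duality (Theorem~\ref{thm PD} for $\mathbb{P}^1$, Corollary~\ref{korPD} for Mumford curves). You make explicit the step $\HH^{p,q}_c(\Xan)=\HH^{p,q}(\Xan)$ by compactness and the reason for the non-triviality hypothesis, both of which the paper leaves implicit, but the argument is the same.
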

\begin{proof}
We have $h^{0,0}(\Xan) = 1$  by Theorem \ref{Identification with singular cohomology} and 
$h^{0,1}(\Xan) = g$ by Theorem \ref{Satz Mumford curve}. 
Thus  $h^{1,1}(\Xan) = 1$ and $h^{1,0}(\Xan) = g$ follow from Theorem \ref{thm PD} if $X=\mathbb{P}^1_K$, and from Corollary \ref{korPD} 
if $X$ is a Mumford curve over a non-trivially valued field.  
\end{proof}

We now use the results on the structure of non-archimedean curves from \cite[Section 3 \& 4]{BPR2} and work with semistable vertex sets, 
skeleta and the corresponding retraction map defined there.
Since these results are only worked out for non-trivially valued $K$, we assume from now on that $K$ is non-trivially valued.
For a semistable vertex set $V_0$ and a smooth projective curve $X$, we write $\Sigma (X, V_0)$ for the skeleton and 
$\tau=\tau_{V_0}$ for the retraction from $\Xan$ to $\Sigma (X, V_0)$.

The following definition is inspired by \cite[Corollary 4.27 \& Definition 4.28]{BPR2}.

\begin{defn}\label{Def. simple}
Let $X$ be a smooth curve. 
An open subset $V$ of $\Xan$ is called \emph{simple} if it is either isomorphic to an open disc
or an open annulus or if there exists a semistable vertex set $V_0$ and a simply connected open subset $W$ of $\Sigma(X, V_0)$
 such that 
$V= \tau_{V_0}^{-1}(W)$. 
A simple open subset $V$ is called \emph{strictly simple}
if its closure in $\Xan$ is simply connected in the case of a disc or an annulus or 
if the closure of $W$ in $\Sigma(X, V_0)$ is simply connected for $V=\tau^{-1}(W)$.
\end{defn}

\begin{prop}
Strictly simple open subsets form a basis of the topology of $\Xan$. 
\end{prop}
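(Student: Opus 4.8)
The plan is to deduce the statement from the structure theory of Berkovich curves developed in \cite[Sections~3 \& 4]{BPR2} (available since $K$ is now assumed non-trivially valued), by first producing a simple neighborhood and then shrinking it. Fix $x\in\Xan$ and an open neighborhood $U$ of $x$; we must find a strictly simple open $V$ with $x\in V\subseteq U$. By \cite[Corollary~4.27 \& Definition~4.28]{BPR2}, $x$ has a simple open neighborhood $V'\subseteq U$, so $V'$ is an open disc, an open annulus, or of the form $\tau_{V_0}^{-1}(W)$ for a semistable vertex set $V_0$ and a simply connected open subset $W$ of the finite metric graph $\Sigma(X,V_0)$ (if the reference only supplies a connected $W$, shrink it first to a simply connected --- e.g.\ star-shaped --- open neighborhood of $\tau_{V_0}(x)$ inside $W$). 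It then remains to shrink $V'$ to a strictly simple set in each of the three cases.

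For the disc case, identify $V'$ with $\{\,|T-a|<R\,\}$; since $|T-a|(x)<R$, the point $x$ lies in $V:=\{\,|T-a|<R'\,\}$ for any $R'$ with $|T-a|(x)<R'<R$, and $\overline V$ (closure in $\Xan$) equals the open disc $V$ together with its Gauss point $\eta_{a,R'}$. This is a closed, hence compact, subset of the compact closed disc $\{\,|T-a|\le R'\,\}\subseteq V'$, so it is closed in $\Xan$; moreover it deformation retracts onto $\{\eta_{a,R'}\}$, so it is simply connected and $V$ is strictly simple. The annulus case is identical in spirit: with $V'\cong\{\,r<|T-a|<R\,\}$ and $r<|T-a|(x)<R$, put $V:=\{\,r'<|T-a|<R'\,\}$ for $r<r'<|T-a|(x)<R'<R$; then $\overline V$ is a compact subset of the compact closed annulus $\{\,r'\le|T-a|\le R'\,\}\subseteq V'$, hence closed in $\Xan$, and it deformation retracts onto the closed interval forming the skeleton of that closed annulus, so $\overline V$ is contractible, in particular simply connected. (This is precisely where the tree-like nature of a non-archimedean annulus is used: over $\C$ an annulus is not simply connected, but here it is.) Finally, if $V'=\tau_{V_0}^{-1}(W)$, set $p:=\tau_{V_0}(x)\in W$ and choose $\varepsilon>0$ small enough that the ball $W':=\{\,q\in W:d_W(p,q)<\varepsilon\,\}$ in the path metric $d_W$ of $W$ is a star about $p$ whose closure is a finite tree contained in $W$; then $W'$ and $\overline{W'}$ are simply connected, so $V:=\tau_{V_0}^{-1}(W')$ is strictly simple and $x\in V\subseteq V'\subseteq U$.

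The substantive content --- the local description of $\Xan$ near points of type $2$ in terms of semistable models, which is exactly what makes simple sets a basis --- is imported from \cite{BPR2}, so what is left to us is essentially bookkeeping. I expect the only mildly delicate points to be the identification of the closure $\overline V$ inside $\Xan$ in the disc and annulus cases --- this is where one uses that $\overline V$ is compact (being a closed subset of a compact closed disc, resp.\ closed annulus, contained in $V'$) and hence closed in $\Xan$, reducing one to the standard fact that such closures retract onto their skeleta --- and the verification that in the skeleton case the shrunken data $(V_0,W')$ satisfy every clause of Definition~\ref{Def. simple}. Neither is a genuine obstacle once the retraction pictures of \cite{BPR2} are in hand.
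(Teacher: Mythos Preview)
Your proof is correct and follows essentially the same approach as the paper: both start from \cite[Corollary~4.27]{BPR2} to get simple open subsets as a basis, then shrink (in each of the three cases disc/annulus/$\tau^{-1}(W)$) so that the relevant closure is simply connected. Your argument is more explicit about identifying the closures and verifying their contractibility, but the strategy is identical.
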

\begin{proof}
For simple open subsets, this follows from \cite[Corollary 4.27]{BPR2}.
Thus it is enough to show that we can cover simple subsets by strictly simple ones.
If $V$ is an open disc or annulus, then we can cover $V$ by open discs and annuli whose closures are contained in $V$, 
thus their closures are simply connected again. 
If $V = \tau ^{-1}(W)$ for $W \subset \Sigma(X, V_0)$ open, then we can cover $W$ by open subsets $W_i$ 
whose closure (in $\Sigma(X, V_0)$) is contained in $W$. 
Thus the closure of $W_i$ is simply connected, and so every $V_i$ is strictly simple.
\end{proof}

For an algebraic curve $X$ and an open subset $V$ of $X^{\an}$, we denote by $\del V$ the topological boundary of $V$ inside $\Xan$.
Note that this is the same as the set of limit points of sequences in $V$ which are not contained in $V$ by \cite[Corollaire 5.5]{Poineau}.
For an open subset $W$ of a skeleton $\Sigma(X, V_0)$ we denote by $\del W$ its boundary inside $\Sigma(X, V_0)$. 
Again this is the set of limit points of sequences in $W$ which are not contained in $W$ since $\Sigma(X, V_0)$ is a finite graph, 
thus satisfies the first countability axiom.

\begin{lem} \label{the boundaries agree}
Let $X$ be a smooth projective curve, $V_0$ a semistable vertex set, $W \subset \Sigma(X,V_0)$ an open subset and $V = \tau_{V_0}^{-1}(W)$. 
Then $\del V = \del W$.
\end{lem}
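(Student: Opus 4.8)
The plan is to show the two inclusions $\del V \subset \del W$ and $\del W \subset \del V$ separately, using the retraction map $\tau = \tau_{V_0} \colon \Xan \to \Sigma(X,V_0)$ and the fact, recalled just before this lemma, that both boundaries can be described via limit points of sequences. The key structural input is that $\tau$ is continuous, proper, and restricts to the identity on the skeleton $\Sigma(X,V_0)$, and that $V = \tau^{-1}(W)$ with $W$ open in the skeleton; in particular $V$ is $\tau$-saturated, so a point lies in $V$ if and only if its image under $\tau$ lies in $W$.

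First I would prove $\del W \subset \del V$. Let $w \in \del W$. Since $\Sigma(X,V_0) \subset \Xan$ and $\tau|_{\Sigma(X,V_0)} = \id$, the point $w$ lies in $\Xan$, and $w \notin V$ because $\tau(w) = w \notin W$. It remains to see $w \in \overline{V}$. Pick a sequence $(w_n)$ in $W$ with $w_n \to w$ in $\Sigma(X,V_0)$; this is also a sequence in $\Xan$ converging to $w$, and $w_n \in \Sigma(X,V_0) \cap V$ since $\tau(w_n) = w_n \in W$. Hence $w$ is a limit of points of $V$ but not in $V$, so $w \in \del V$.

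Next I would prove $\del V \subset \del W$. Let $x \in \del V$, so $x \notin V$ and there is a sequence $(x_n)$ in $V$ with $x_n \to x$ in $\Xan$. By continuity of $\tau$, $\tau(x_n) \to \tau(x)$ in $\Sigma(X,V_0)$, and $\tau(x_n) \in W$ since $x_n \in V = \tau^{-1}(W)$. Moreover $\tau(x) \notin W$: otherwise $x \in \tau^{-1}(W) = V$, contradicting $x \in \del V$. Thus $\tau(x)$ is a limit point of $W$ not lying in $W$, i.e.\ $\tau(x) \in \del W$. To conclude $x = \tau(x)$ — which is what puts $x$ itself in $\del W \subset \Sigma(X,V_0)$ — I would use that $\del W \subset \overline{W} \subset \Sigma(X,V_0)$, and that $\tau$ fixes $\Sigma(X,V_0)$ pointwise: since $W$ is open in the graph $\Sigma(X,V_0)$ and $\tau(x)$ is a boundary point of it, we have $\tau(x) \in \Sigma(X,V_0)$, and then $\tau(\tau(x)) = \tau(x)$; what still needs an argument is that $x$ itself already lies on the skeleton. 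This is the place where one genuinely uses the structure of simple open subsets: if $V = \tau^{-1}(W)$ and $x \in \del V$, then $x$ must be an endpoint of one of the "edges" of $\Xan$ lying over $W$, and such a point lies on $\Sigma(X,V_0)$ — concretely, the only points of $\tau^{-1}(\overline W)$ that are not interior to $V$ and are limits of points of $V$ are the preimages of $\del W$, and over a boundary point $w \in \del W$ the fibre $\tau^{-1}(w)$ meets $\overline V$ only in $w$ itself, because $W$ being open in the skeleton forces the "tree directions" at $w$ pointing into $V$ to be accounted for by the branch of $\Sigma(X,V_0)$ through $w$.

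The main obstacle I anticipate is exactly this last point: verifying that a boundary point $x$ of $V$ lies on the skeleton rather than somewhere in the interior of a fibre of $\tau$. For the disc and annulus cases of a simple open set this has to be checked separately (and there the statement $\del V = \del W$ should be read with the appropriate convention for what $W$ and the "skeleton" are, e.g.\ the Gauss point or the circle of the annulus). I would handle it by invoking the explicit description of $\tau^{-1}(w)$ for $w \in \Sigma(X,V_0)$ from \cite[Section 3 \& 4]{BPR2}: away from the finitely many type-$2$ vertices, $\tau^{-1}(w)$ is a single point of the skeleton together with open discs hanging off it, and a net in $V$ converging to a point of such a disc would have $\tau$-images eventually equal to $w$, forcing $w \in W$ (as $W$ is open) and hence that tail of the net, and its limit, inside $V$ — contradicting $x \notin V$. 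Thus the limit $x$ must be the skeleton point $w = \tau(x) \in \del W$, giving $\del V \subset \del W$ and completing the proof.
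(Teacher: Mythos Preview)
Your argument is correct and runs a little differently from the paper's. For $\del V \subset \del W$, the paper does a case split on whether the sequence $(x_n)$ meets $W$ infinitely often: if so, that subsequence lies in $\Sigma(X,V_0)$ and already witnesses $x \in \del W$; if not, the decomposition $V \setminus W = \coprod_{y \in W} (\tau^{-1}(y) \setminus \{y\})$ together with the closure identity $\overline{\tau^{-1}(y) \setminus \{y\}} = \tau^{-1}(y)$ from \cite[Lemma 3.2 and Lemma 3.4(3)]{BPR2} forces $x \in V$, a contradiction. You instead first push the sequence down by continuity of $\tau$ to obtain $\tau(x) \in \del W$, and then argue that $x \in \Sigma(X,V_0)$: otherwise $x$ lies in an open component $D$ of $\Xan \setminus \Sigma(X,V_0)$ on which $\tau$ is constant, so the tail of $(x_n)$ lands in $D$ and hence $\tau(x) \in W$, contradicting $\tau(x) \in \del W$. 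Both proofs rest on the same structural fact about the fibres of $\tau$; your version trades the paper's case split and connectedness step for a direct use of the continuity of $\tau$, which is arguably a bit cleaner. A few clean-ups: the parenthetical ``(as $W$ is open)'' is not what you use --- you only need $\tau(x_n) \in W$ and the eventual equality $\tau(x_n) = \tau(x)$; the restriction ``away from the finitely many type-$2$ vertices'' can be dropped, since the description of $\tau^{-1}(w)$ as $\{w\}$ plus open discs holds at every point of the skeleton; and the aside about checking disc and annulus cases separately is irrelevant to this lemma, which only concerns subsets of the form $V = \tau^{-1}(W)$.
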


\begin{proof}
Since $W \subset V$, we have that $\del W$ is contained in $\overline{V}$. 
If there is a point $x\in \del W\cap V$, we have $x=\tau(x)\in W$, where we use that $\tau$ is the identity on the skeleton. 
This contradicts $x\in \del W$, and so  $\del W\subset\del V$. 

Suppose now that $x \in \del V$. 
Let $(x_n)$ be a sequence in $V$ converging to $x$. 
If there are infinitely many $x_n$ in $W$, then this subsequence also converges to $x$ and thus $x \in \del W$. 
Otherwise, we may assume that $x_n \in V \setminus W$ for all $n$. 
By the definition of $\tau$ and \cite[Lemma 3.4 (3)]{BPR2}, we have 
\begin{align*}
V \setminus W = \coprod \limits_{y \in W} \tau^{-1}(\{y\}) \setminus \{y\}.
\end{align*}
Since the sequence $(x_n)$ converges, there need to be infinitely many $x_n$ in one of its connected components. 
Passing again to a subsequence, we may thus assume that there exists $y \in W$ such that $x_n \in \tau^{-1}(\{y\}) \setminus \{y\}$ for all $n$. 
Thus $x \in \overline{\tau^{-1}(\{y\}) \setminus \{y\}} = \tau^{-1}(\{y\})\subset V$ by \cite[Lemma 3.2]{BPR2}, which contradicts $x \in \del V$. 
\end{proof}

\begin{kor} \label{the boundary is finite}
Let $X$ be a smooth projective curve and $V$ a strictly simple open subset of $\Xan$. 
Then $\del V$ is finite.
\end{kor}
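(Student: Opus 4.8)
The plan is to go through the three kinds of strictly simple open subsets from Definition \ref{Def. simple} one at a time. The only case carrying any content is the one where $V$ is the $\tau$-preimage of an open subset of a skeleton: there the claim is immediate from Lemma \ref{the boundaries agree} together with the elementary observation that a connected open subset of a finite graph has finite boundary. The disc and the annulus cases are not of this form (they cannot be written as $\tau_{V_0}^{-1}(W)$ for open $W\subset\Sigma(X,V_0)$), so they must be treated separately, but there the boundary is handled by quoting the structure theory of Berkovich curves. Accordingly I do not expect a genuine obstacle; the only non-routine point is that one must \emph{invoke}, rather than reprove, the fact that an open disc (resp.\ an open annulus) in $\Xan$ has exactly one (resp.\ exactly two) boundary points.

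First I would dispose of the disc and annulus cases. If $V$ is isomorphic to an open disc, then via that isomorphism one reduces to a standard open disc in $\A^{1,\an}$, whose closure is obtained by adjoining a single point (the Gauss point of the disc); hence $\del V$ is a single point. Likewise, if $V$ is isomorphic to an open annulus, its closure adds exactly the two ``ends'' of the annulus, so $\del V$ consists of two points. Both statements are part of the structure theory of curves from \cite[Section~2 \& 3]{BPR2}, and in both cases $\del V$ is finite.

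Finally I would treat the remaining case: $V=\tau_{V_0}^{-1}(W)$ for a semistable vertex set $V_0$ and a simply connected --- in particular connected --- open subset $W$ of $\Sigma:=\Sigma(X,V_0)$. By Lemma \ref{the boundaries agree} we have $\del V=\del W$, so it suffices to show $\del W$ is finite. Now $\Sigma$ is a finite metric graph, i.e.\ a finite union of vertices and closed edges, each edge homeomorphic to a compact interval. Fix an edge $e$ and consider the connected components of the open set $W\cap\mathring e$, which are open subintervals of $\mathring e$. If one such component had both of its endpoints in the interior of $e$, it would be both open and closed in $W$ --- open because it is open in $\Sigma$, closed because neither of its endpoints lies in $W$ --- hence equal to $W$ by connectedness, in which case $W$ is a single open subinterval of an edge and $\del W$ is finite. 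Otherwise every component of $W\cap\mathring e$ abuts an endpoint of $e$, so there are at most two of them, each contributing at most one point of $\del W$ inside $\mathring e$. Since $\Sigma$ has only finitely many edges and finitely many vertices, $\del W=\del V$ is finite, completing the proof.
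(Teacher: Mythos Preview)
Your proof is correct and follows the same three-case structure as the paper's own proof: the disc and annulus cases are handled by citing the structure theory (the paper cites \cite[Lemma 3.3 \& Lemma 3.6]{ABBR} rather than \cite{BPR2}, but to the same effect), and the skeleton case is reduced via Lemma \ref{the boundaries agree} to the finiteness of $\partial W$ in a finite graph. The only difference is that you spell out the elementary graph argument in detail, whereas the paper simply asserts that a connected open subset of a finite graph has finite boundary; one small inaccuracy in your plan paragraph is the claim that discs and annuli \emph{cannot} be written as $\tau_{V_0}^{-1}(W)$ --- in fact they sometimes can (cf.\ the proof of Lemma \ref{Lemma Sternchen}) --- but this does not affect your argument.
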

\begin{proof}
If $V$ is an open disc resp.~an open annulus with simply connected closure, it has one resp.~two boundary points 
\cite[Lemma 3.3 \& Lemma 3.6]{ABBR}.
Otherwise this is a direct consequence of Lemma \ref{the boundaries agree} and the fact that $\del W$ (for $W$ as defined there)
is finite. The latter is the case because $W$ is a connected open subset of a finite graph. 
\end{proof}

\begin{lem}\label{Lemma Sternchen}
Let $X$ be a smooth projective curve and $V \subset \Xan$ a strictly simple 
open subset such that $\# \del V = k$.
Then $V$ is properly homotopy equivalent to the one point union of $k$ copies of half open intervals, glued at the closed ends.
\end{lem}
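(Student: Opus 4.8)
The plan is to reduce to the two model cases — discs and annuli on one hand, preimages of skeleton neighborhoods on the other — and in each case exhibit an explicit proper deformation retraction onto the claimed wedge of half-open intervals.

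First I would handle the case where $V$ is an open disc or an open annulus with simply connected closure. An open disc retracts onto its Gauss point, which is a properly homotopy equivalence onto a point, i.e.\ the one point union of $k=1$ half open interval is just a point (a degenerate case, or one reads the wedge of a single half-open interval as such). More carefully: an open disc $D$ is properly homotopy equivalent to a half-open interval $[0,\infty)$ via the retraction along the unique path from the boundary point into $D$; here $k=1$. An open annulus has $k=2$ boundary points, and it retracts properly onto its skeleton, which is an open segment, and an open segment is properly homotopy equivalent to the one point union of two half-open intervals glued at their closed ends. So the statement holds in these cases by the standard structure theory of discs and annuli (cf.\ \cite{BPR2}, \cite{ABBR}).

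Next, for the remaining case $V = \tau_{V_0}^{-1}(W)$ with $W$ a simply connected open subset of the skeleton $\Sigma(X,V_0)$ whose closure in $\Sigma(X,V_0)$ is simply connected, I would use that $\tau = \tau_{V_0}\colon V \to W$ is a proper deformation retraction: every point of $\Xan$ lies on a unique path to the skeleton and flowing along these paths gives a proper homotopy from $\id_V$ to $\tau$ (this is the standard deformation retraction of a Berkovich curve onto a skeleton, restricted to $V$; properness follows because $\tau$ is proper and the homotopy is through proper maps). Hence $V$ is properly homotopy equivalent to $W$. Now $W$ is a connected, simply connected open subset of the finite metric graph $\Sigma(X,V_0)$ with simply connected closure; since $\del V = \del W$ has $k$ points by Lemma \ref{the boundaries agree} and Corollary \ref{the boundary is finite}, $W$ is a finite open tree with exactly $k$ ends, each end "pointing at" a boundary point. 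Such a tree deformation retracts properly onto any interior point, and the retraction exhibits it as the one point union of $k$ half open intervals glued at their closed ends: choose an interior vertex $v_0$, and for each of the $k$ ends the unique arc from $v_0$ to that end is a half-open interval, and $W$ is the union of these arcs glued at $v_0$. Properness of this retraction holds because $W$ is locally compact and the retraction is proper on each arc.

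The main obstacle I expect is the bookkeeping in the last case: verifying that a simply connected open subset $W$ of a finite graph with simply connected closure and exactly $k$ boundary points really is (up to proper homotopy) a $k$-fold wedge of half-open intervals, and that the retraction can be taken proper. This requires knowing that $W$ contains no loops (which is where simple connectedness of $W$ is used) and that each of the $k$ boundary points is approached by exactly one edge-germ of $W$ (which is where simple connectedness of $\overline{W}$ is used — otherwise two ends could limit on the same boundary point, or a boundary point could be a branch point of $\overline W$, spoiling the count). Once the combinatorial picture of $W$ is pinned down, the explicit proper homotopy is routine: contract each pendant arc toward the chosen basepoint $v_0$ at unit speed, leaving the half-open ends fixed "at infinity".
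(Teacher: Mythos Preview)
Your approach is essentially the paper's: retract $V$ onto (a subset of) a skeleton via $\tau$ and then analyze the resulting open tree $W$. The paper organizes the disc and annulus cases slightly differently --- it passes to $\mathbb{P}^1$, chooses a semistable vertex set there, and realizes the disc resp.\ annulus as $\tau^{-1}$ of a half-open resp.\ open interval, so that all three cases are handled by the same mechanism and properness comes for free from the compactness of the ambient projective curve --- but your direct treatment via structure theory is fine.

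One genuine slip to fix: the sentence ``such a tree deformation retracts properly onto any interior point'' is false and not what you want. A proper homotopy equivalence preserves $\HH^*_{c}$, and a point has $\HH^0_{c} = \R$ while a connected open tree with at least one end has $\HH^0_{c} = 0$; so no such proper retraction exists. What actually works is to collapse the internal (compact) edges of $W$ one at a time --- each such collapse is a proper homotopy equivalence --- until only a star with $k$ half-open rays remains. Relatedly, the arcs from your chosen $v_0$ to the $k$ ends will in general overlap on internal edges, so ``$W$ is the union of these arcs glued at $v_0$'' describes the target of that collapse, not $W$ itself. The paper is equally brief on this combinatorial step, so once you replace the incorrect sentence with the edge-collapse argument your proof is complete.
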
 

\begin{proof}
At first, we consider the case where $V$ is an open disc of radius $r$ or an open annulus with radii $r_1 < r_2$. 
Note that if $V$ is a disc, then $k = 1$. 
If $V$ is an annulus, then $k = 2$ since the closure of $V$ is simply connected.
In both situations, we may thus assume that $X=\mathbb{P}^1$. 
 For any $R>0$ we denote by $\zeta_{R}$ the point in $\mathbb{P}^{1,\an}$ which is given by the seminorm $ f \mapsto \sup_{c \in D(0,R)} |f(c)|$
 and by $[\zeta_{R}, \zeta_{R'}]$ the unique path between two such points in the uniquely path connected space 
 $\mathbb{P}^{1,\an}$ \cite[Lemma 2.10]{BR}.

If $V$ is an open disc of radius $r$, by change of coordinates we may assume that $V$ is the connected component of 
$\mathbb{P}^{1,\an}\backslash \{\zeta_{r}\}$ containing $0$.   
We take $V_0$ to be the semistable vertex set consisting of $\zeta_{R_1}$ and $\zeta_{R_2}$ with $R_1,R_2\in \vert K^\times\vert$ and $R_1<r<R_2$.
Then $\Sigma(\mathbb{P}^1, V_0) = [\zeta_{R_1},\zeta_{R_2}]$ and we find $V = \tau^{-1}([\zeta_{R_1}, \zeta_{r}))$.
Since $\tau$ is a homotopy equivalence between compact spaces, it induces a proper homotopy equivalence $V \rightarrow [\zeta_{R_1}, \zeta_{r})$.

If $V$ is an annulus with radii $r_1 < r_2$, we take $R_1,R_2\in \vert K^\times\vert$ with $R_1 < r_1$ and $r_2 < R_2$.   
Then $\Sigma(\mathbb{P}^1, V_0) = [\zeta_{R_1},\zeta_{R_2}]$ and now $V = \tau^{-1}((\zeta_{r_1},\zeta_{r_2}))$. 
Again, $(\zeta_{r_1},\zeta_{r_2})$ is properly homotopy equivalent to $V$.

Now let $V = \tau^{-1} (W)$ for a simply connected open  subset $W$ of $\Sigma(X, V_0)$ for some semistable vertex set $V_0$. 
Again, $W$ is properly homotopy equivalent to $V$. 
Since $\overline{W}$ is simply connected, $W$ is the interior of a simply connected finite graph contained in $\Sigma(X, V_0)$, 
thus properly homotopy equivalent to the one point union of $\# \del W$ copies of half open intervals, glued at the closed ends. 
The result now follows from Lemma \ref{the boundaries agree}.
\end{proof}

Note that the following theorem applies to all strictly simple open subsets $V$ of $\Xan$ if $X$ is a Mumford curve or $\mathbb{P}^1$
by Theorem \ref{Satz Mumford curve} and Proposition \ref{Prop Thm Mumford curve}.

\begin{satz} \label{Theorem Teilmenge Koho.}
Let $X$ be a smooth projective  curve over $K$ and $p,q\in\{0,1\}$. 
Let $V$ be a strictly simple open subset of $\Xan$ such that all type $2$ points in $V$ have genus $0$ and
denote by $k := \# \del V$ the finite number of boundary points. 
Then we have 
	\begin{align*}
	h^{p,q}(V) = 
	\begin{cases}
	1 &\text{ if } (p,q) = (0,0) \\
	k - 1 &\text{ if } (p,q) = (1,0) \\
	0 &\text{ if } q \neq 0
	\end{cases}
	\text{ and }
	h^{p,q}_c(V) = 
	\begin{cases}
	1 &\text{ if } (p,q) = (1,1) \\
	k - 1 &\text{ if } (p,q) = (0,1) \\
	0 &\text{ if } q \neq 1.
	\end{cases}
	\end{align*}
\end{satz}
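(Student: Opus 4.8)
The plan is to reduce everything to the two previously established facts: Poincaré duality for $V$ (Corollary \ref{korPD}, applicable since all type $2$ points of $V$ have genus $0$) and the identification of the Dolbeault cohomology in bidegree $(0,\bullet)$ with singular cohomology (Theorem \ref{Identification with singular cohomology}). First I would compute the singular cohomology of $V$. By Lemma \ref{Lemma Sternchen}, $V$ is properly homotopy equivalent to a one-point union $T_k$ of $k$ half-open intervals glued at their closed ends. This space is contractible, so $h^0_{\sing}(V) = 1$ and $h^q_{\sing}(V) = 0$ for $q > 0$; for compactly supported cohomology, $T_k$ deformation retracts properly nowhere, but a direct computation (or the long exact sequence of the pair, or Poincaré--Lefschetz for the graph) gives $h^0_{c,\sing}(T_k) = 0$, $h^1_{c,\sing}(T_k) = k - 1$, and $h^q_{c,\sing} = 0$ otherwise. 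Via Theorem \ref{Identification with singular cohomology} this yields $h^{0,0}(V) = 1$, $h^{0,1}(V) = k-1$, $h^{0,q}(V) = 0$ for $q \neq 0,1$, and $h^{0,0}_c(V) = 0$, $h^{0,1}_c(V) = k - 1$.

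Next I would invoke Poincaré duality. Since all type $2$ points of $V$ have genus $0$, Corollary \ref{korPD} tells us $V$ has $\PD$, i.e.\ $\HH^{p,q}(V) \cong \HH^{1-p,1-q}_c(V)^*$ for all $p,q$. Applying this with $(p,q) = (1,1)$ gives $h^{1,1}(V) = h^{0,0}_c(V) = 0$; wait — more carefully, $h^{1,1}(V) = h^{0,0}_c(V)$, and I need the values of $h^{p,q}_c$, which I get by the same duality in the other direction: $h^{1,1}_c(V) = h^{0,0}(V) = 1$, $h^{0,1}_c(V) = h^{1,0}(V)$, and $h^{1,0}(V) = h^{0,1}_c(V) = k-1$. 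Cross-checking: duality also gives $h^{1,0}_c(V) = h^{0,1}(V) = k-1$ — but this should be $0$ according to the claimed table. Here is the subtlety I would need to resolve: the table asserts $h^{p,q}_c(V) = 0$ for $q \neq 1$, in particular $h^{1,0}_c(V) = 0$, which forces $h^{0,1}(V)$... no. Let me recompute: $\PD$ says $\HH^{p,q}(V) \cong \HH^{1-p,1-q}_c(V)^*$, so $\HH^{1,0}(V) \cong \HH^{0,1}_c(V)^*$ and $\HH^{0,1}(V)\cong\HH^{1,0}_c(V)^*$. Since $h^{0,1}(V) = k-1$ from the singular computation, we get $h^{1,0}_c(V) = k - 1$. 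This contradicts the table's claim that $h^{p,q}_c(V) = 0$ for $q \neq 1$ unless $k = 1$. So the resolution must be that $h^{0,1}(V) = 0$, i.e.\ $V$ is actually simply connected — which is exactly the content of "strictly simple": its closure is simply connected, hence $V$ itself is simply connected and $T_k$ is genuinely contractible with $h^0_{\sing} = 1$ and all higher (ordinary) singular cohomology vanishing, so indeed $h^{0,1}(V) = 0$. Good — that is consistent, and then the table's "$h^{0,1} = k-1$" under $h^{p,q}$ must be a typo for "$h^{1,0} = k-1$", which matches: $h^{1,0}(V) = h^{0,1}_c(V)^* $-dimension $= k - 1$.

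So the clean route is: (1) compute $h^{p,q}_c(V)$ directly from $\HH^{0,\bullet}_c(V) = \HH^\bullet_{c,\sing}(V) = \HH^\bullet_{c,\sing}(T_k)$, getting $h^{0,1}_c = k-1$, $h^{0,q}_c = 0$ for $q\neq 1$; (2) apply $\PD$ (Corollary \ref{korPD}) to transport these to $h^{1,q}(V)$, giving $h^{1,0}(V) = k-1$, $h^{1,q}(V) = 0$ for $q \neq 0$; (3) compute $h^{0,q}(V)$ from ordinary singular cohomology of the contractible space $T_k$, giving $h^{0,0}(V) = 1$ and $h^{0,q}(V) = 0$ for $q > 0$; (4) apply $\PD$ once more to get $h^{1,q}_c(V)$, namely $h^{1,1}_c(V) = 1$ and $h^{1,q}_c = 0$ otherwise. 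The main obstacle is purely bookkeeping: getting the compactly supported singular cohomology of the non-compact tree $T_k$ right (best done via the cofiber sequence relating $T_k$ to its compactification, a star with $k$ closed edges, which has reduced cohomology concentrated in degree... $0$, contributing the $k-1$), and making sure the asymmetry between $p$ and $q$ in the wedge/integration pairing is tracked correctly so the duality isomorphism is applied with the right bidegrees. No genuinely hard analytic input is needed beyond what Sections \ref{section modifications} and \ref{Section 4} already provide.
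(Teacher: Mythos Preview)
Your clean route (steps (1)--(4) at the end) is correct and is exactly the argument the paper gives: reduce to $h^{0,q}$ and $h^{0,q}_c$ via Poincar\'e duality (Corollary \ref{korPD}), identify these with singular cohomology (Theorem \ref{Identification with singular cohomology}), and compute the latter for the tree $T_k$ using contractibility and, for compact supports, the long exact sequence of a pair. One clarification: there is no typo in the table --- it already says $h^{1,0}(V)=k-1$, not $h^{0,1}(V)=k-1$; your momentary confusion came from writing $h^{0,1}(V)=k-1$ after correctly noting $h^q_{\sing}(V)=0$ for $q>0$, but you self-corrected, and the final argument is sound.
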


\begin{proof}
First, note that $V$ has $\PD$ by Corollary \ref{korPD}.
Thus it is sufficient to calculate $h^{0,q}(V)$ and $h^{0,q}_c(V)$. 
By identification with singular cohomology (cf.~Theorem \ref{Identification with singular cohomology}), 
we only have to calculate $h^{q}_{\sing}(V)$ and $h^{q}_{c, \sing}(V)$,
which are invariant under proper homotopy equivalences. 
Thus by Lemma \ref{Lemma Sternchen}, we have to calculate $h^{q}_{\sing}(Y)$ and $h^{q}_{c, \sing}(Y)$ for $Y$  a one point union of $k$ intervals. 
Since $Y$ is connected and contractible, we have  $h^{0}_{\sing}(Y) = 1$ and $h^{1}_{\sing}(Y) = 0$.
Calculating its cohomology with compact support is an exercise in algebraic topology, which we lay out for the convenience of the reader.
We have 
\begin{align*}
\HH^{q}_{c, \sing}(Y) = \varinjlim \limits_{E \subset Y \text{ compact}} \HH^q_{\sing}(Y, Y \setminus E)
\end{align*}
by \cite[p. 244]{Hatcher}. 
Each compact subset $E'$ of $Y$ is contained in a connected compact subset $E$ where $E$ intersects all intervals from which $Y$ is glued. 
Thus we may restrict our attention to those $E$.
For every pair $E_1 \subset E_2$ of such subsets, $(Y, Y \setminus E_2) \inj (Y, Y \setminus E_1)$ is a homotopy equivalence, 
thus all transition maps in the limit are isomorphisms. 
We have $\HH^{0}_{\sing}(Y) = \R$, $\HH^0_{\sing}(Y \setminus E) = \R^k$ and for both $Y$ and $Y \setminus E$ higher cohomology groups vanish. 
Now using the long exact sequence of pairs gives the desired result. 
\end{proof}

\bibliographystyle{alpha}
\def\cprime{$'$}

\end{document}